\let\doendproof\endproof
\renewcommand\endproof{~\hfill$\qed$\doendproof}
\begin{document}
\title{Fast gradient method for Low-Rank Matrix Estimation
}


\author{Hongyi Li \and Zhen Peng \and Chengwei Pan \and Di Zhao.}

\institute{H. Li, Z. Peng and D. Zhao  \at
	LMIB, School of Mathematical Sciences, Beihang University, Beijing 100191, China
	\and 
	C. Pan \at
	Institute of Artificial Intelligence, Beihang University, Beijing 100191, China
}
\date{Received: date / Accepted: date}

\maketitle

\begin{abstract}
Projected gradient descent and its Riemannian variant belong to a typical class of methods for low-rank matrix estimation.
This paper proposes a new Nesterov's Accelerated Riemannian Gradient algorithm using efficient orthographic retraction and tangent space projection.
The subspace relationship between iterative and extrapolated sequences on the low-rank matrix manifold provides computational convenience.
With perturbation analysis of truncated singular value decomposition and two retractions, we systematically analyze the local convergence of gradient algorithms and Nesterov's variants in the Euclidean and Riemannian settings.
Theoretically, we estimate the exact rate of local linear convergence under different parameters using the spectral radius in a closed form and give the optimal convergence rate and the corresponding momentum parameter.
When the parameter is unknown, the adaptive restart scheme can avoid the oscillation problem caused by high momentum, thus approaching the optimal convergence rate.
Extensive numerical experiments confirm the estimations of convergence rate and demonstrate that the proposed algorithm is competitive with first-order methods for matrix completion and matrix sensing.

\keywords{Low-rank matrix estimation \and Local convergence analysis \and Riemannian optimization \and Nesterov's accelerated Riemannian gradient \and Adaptive restart scheme}

\end{abstract}

\section{Introduction}
\label{sec:intro}

Recently, low-rank matrix estimation, as a fundamental model, has played an irreplaceable role in signal processing and machine learning~\cite{chi2019nonconvex}.
Such a model aims to recover complete information with a latent low-rank structure from the collected measurements $\boldsymbol y=\mathcal A(\boldsymbol X_\star)$, which is described as follows:
\begin{eqnarray}\label{eq:lowrank}
\min_{\boldsymbol X} f(\boldsymbol X):=\|\mathcal A(\boldsymbol X)-\boldsymbol y\|_2^2 \quad \text{s.t.}\quad \text{rank}(\boldsymbol X) \replaced{=}{\leq} r,
\end{eqnarray}
where $\mathcal A:\mathbb R^{n_1\times n_2}\mapsto \mathbb R^{m}$ is a linear operator, which arises in various applications, such as Matrix Completion (MC) and Matrix Sensing (MS).
Classical convex relaxation bypasses the computationally intractable nonconvex low-rank constraint by nuclear norm minimization.
Nonetheless, the computational and space complexity proportional to the matrix size severely limits the applicability of convex relaxations to large-scale problems.
Therefore, the nonconvex optimization of the model (\ref{eq:lowrank}) attracts more attention from researchers~\cite{chen2018harnessing}.
As a well-known class of low-rank matrix estimation algorithms, projected gradient descent alternates between vanilla gradient descent and low-rank matrix projection~\cite{davenport2016overview}.
The typical one performs a hard-thresholding operation on singular values, thus termed Iterative Hard Thresholding (IHT)~\cite{jain2010guaranteed}.
As these Euclidean methods suffer from a high computational burden associated with truncated Singular Value Decomposition (SVD), growing attention has turned to Riemannian optimization~\cite{vandereycken2013low}.
The fact that the rank of the tangent vector does not exceed $2r$ provides an efficient implementation of truncated SVD~\cite{cai2018exploiting}, which inspires a large class of Riemannian gradient descent (RGrad) algorithms.

Since the Heavy-ball method~\cite{polyak1964some} and Nesterov's Accelerated Gradient (NAG) method~\cite{nesterov1983method}, the introduction of momentum is one of the conventional ways to overcome the short-sighted issue of the gradient algorithm~\cite{li2019accelerated,wang2022stochastic}.
Theoretically, the NAG algorithm with optimal parameters can match the lower bound of the first-order optimization algorithm~\cite{li2020accelerated}.
However, on the one hand, optimal parameters are often challenging in practice.
On the other hand, iterative sequence oscillations caused by inappropriate parameters can significantly degrade performance.
To address the parameter selection issue, a seminal adaptive restart scheme~\cite{o2015adaptive} resets momentum when extrapolation is in the wrong direction.
Specifically for MC, Vu et al.~\cite{vu2019accelerating} accurately estimate local linear convergence of a NAG version of IHT via spectral radius, which has recently been generalized under general constraints~\cite{vu2021asymptotic}.
The adaptive restart scheme verifies the optimal asymptotic convergence rate in numerical results.
Nevertheless, this Euclidean-based acceleration does not enjoy the advantage of the excellent tools on the low-rank matrix manifold, which motivates us to analyze Nesterov\added{'s} acceleration from a Riemannian perspective.

In contrast to NAG, Nesterov's Accelerated Riemannian Gradient (NARG) method uses operations between tangent spaces and manifolds to ensure that the extrapolation lies on the manifold~\cite{ahn2020nesterov}, such as exponential operators, logarithmic operators, and parallel transport.
For most matrix manifolds, replacing the exponential operator with matrix factorization-based retraction enables an efficient implementation of NARG~\cite{duruisseaux2022variational}, for instance, sparse principal component analysis on the Stiefel manifold~\cite{huang2022extension}.
The intractable difficulty of NARG is that the inverse of the retraction usually does not have a closed-form expression, requiring an iterative algorithm to solve.
In particular, there is little work on the acceleration of the low-rank matrix manifold because the inverse of projection retraction may not be uniquely defined~\cite{absil2015low}.

Fortunately, although uncommon, orthographic retraction and its inverse admit simple and closed representations~\cite{absil2012projection,absil2015low}.
In this paper, we combine NAG and RGrad to develop a novel NARG method for low-rank matrix estimation.
To our knowledge, it is the first algorithm that uses orthographic retraction to establish subspace relations between iterative and extrapolated sequences.
The overall comparison is shown in Fig.~\ref{fig-all}.
Owing to momentum on the low-rank matrix manifold, NARG has the same computational complexity as RGrad~\cite{wei2016guarantees,wei2020guarantees}, with an advantage in convergence.
Based on the efficient implementation of orthographic retraction, the computational complexity of NARG is lower than that of NAG.

Our contributions can be summarized into three folds.
1) We firstly present a first-order perturbation analysis of the retractions, which provides a recursive representation of the iterative error.
2) By analyzing the relation of the spectral radius of the iterative matrix w.r.t. the parameters, we accurately estimate the linear convergence rate of all the algorithms in Fig.~\ref{fig-all}.
3) The convergence rate of NRAG+R, which uses the \emph{Adaptive Restart} Scheme, can match the theoretical optimal spectral radius.

\begin{figure}[htbp]
\centering
\subfigure[Illustration of Grad]{\label{fig:Grad-Algorithm}
\begin{minipage}[t]{0.47\linewidth}
\centering
\includegraphics[width=0.7\linewidth]{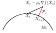}
\end{minipage}
}
\subfigure[Illustration of NAG]{\label{fig:NAG}
\begin{minipage}[t]{0.47\linewidth}
\centering
\includegraphics[width=0.7\linewidth]{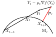}
\end{minipage}
}
\\
\subfigure[Illustration of RGrad]{\label{fig:RGrad}
\begin{minipage}[t]{0.47\linewidth}
\centering
\includegraphics[width=0.9\linewidth]{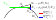}
\end{minipage}
}
\subfigure[Illustration of NARG]{\label{fig:NARG}
\begin{minipage}[t]{0.47\linewidth}
\centering
\includegraphics[width=0.9\linewidth]{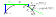}
\end{minipage}
}
\centering
\caption{The overall comparison: the traditional and accelerated methods are compared horizontally, and the Euclidean and Riemannian methods are compared vertically.}
\label{fig-all}
\end{figure}

For the convenience of readers, we compare representable algorithms for convergence and computational cost in Table~\ref{table:comparison}.
The IHT algorithm uses a constant stepsize, and the other\added{s} use the exact line search.
All algorithms exhibit local linear convergence when the condition (\ref{eq:Basin-of-Attraction}) holds.
In a nutshell, the results of NARG+R are dominant in both respects, which will be verified in subsequent experiments.

\begin{table*}[!h]
	\caption{Complexity comparisons between gradient algorithms and Nesterov's variants in the Euclidean and Riemannian settings.}
	\label{table:comparison}
	\centering
	\resizebox{\textwidth}{!}{
	\begin{tabular}{c|c|c|c} 
		\hline
		\textbf{Algorithm} & \textbf{Geometry} & \textbf{Local Linear Convergence Rate} & \textbf{\thead{Dominant per-iteration\\ computational complexity}} \\ [0.5ex] 
		\hline\hline
		IHT & Euclidean & $\max(1-\mu_t \lambda_{\min},\mu_t \lambda_{\max}-1)\geq\frac{\kappa-1}{\kappa+1}$ & $\mathcal O(n^3)$ \\ 
		\hline
		Grad & Euclidean & $\sqrt{1-\frac{\tilde\mu^2\lambda_{\max}\lambda_{\min}}{\tilde\mu(\lambda_{\max}+\lambda_{\min})-1}}\leq \frac{\kappa-1}{\kappa+1}$ & $\mathcal O(n^3)$ \\ 
		\hline
		NAG & Euclidean & $\sqrt{\eta_t(1-{\frac{4\lambda_{\min}}{\lambda_{\min}+3\lambda_{\max}}})}$ & $\mathcal O(n^3)$ \\ 
		\hline
		RGrad & Riemannian & $\sqrt{1-\frac{\tilde\mu^2\lambda_{\max}\lambda_{\min}}{\tilde\mu(\lambda_{\max}+\lambda_{\min})-1}}\leq \frac{\kappa-1}{\kappa+1}$ & $\mathcal O(n^2r)$ \\ 
		\hline
		NARG & Riemannian & $\sqrt{\eta_t(1-{\frac{4\lambda_{\min}}{\lambda_{\min}+3\lambda_{\max}}})}$ & $\mathcal O(n^2r)$ \\ 
		\hline
		NARG+R & Riemannian & $1-\sqrt{\frac{4\lambda_{\min}}{\lambda_{\min}+3\lambda_{\max}}}\approx 1-\sqrt{\frac{1}{\kappa}}$ & $\mathcal O(n^2r)$ \\ 
		\hline\hline
		\multicolumn{4}{l}{Parameters: $n=\min(n_1,n_2)$, Gradient stepsize $\mu_t$, Momentum parameter $\eta_t$, $\tilde\mu = \frac{\|\nabla_{\mathcal R} f(\boldsymbol X_t)\|_F^2}{\|\mathcal A(\nabla_{\mathcal R} f(\boldsymbol X_t))\|_2^2}$} \\
		\hline
	\end{tabular}}
\end{table*}

\subsection{Notation and Organization}

Throughout the paper, vectors are denoted by lowercase letters (e.g., $\boldsymbol{x}$), matrices by uppercase letters (e.g., $\boldsymbol{X}$), operators by calligraphic letters (e.g., $\mathcal P$), and set of matrices by double-stroke letters (e.g., $\mathbb {R}^{n_1\times n_2}$).
We utilize $\boldsymbol I_{n}$ as the $n$-by-$n$ identity matrix and abbreviate as $\boldsymbol I$ without size if the context is clear.
Let $\mathbb O^{p,r}=\{\boldsymbol U\in\mathbb R^{p\replaced{\times}{,} r}:\boldsymbol U^\top \boldsymbol U=\boldsymbol I_r\}$ represent a set of matrices with orthogonal columns.
For $\boldsymbol U\in\mathbb O^{p,r}$, $\boldsymbol U_\perp\in\mathbb O^{p,p-r}$ and $P_{\boldsymbol U}=\boldsymbol U\boldsymbol U^\top$ respectively denote its orthonormal complement and projection matrix.
\added{We use $P_{\boldsymbol U}^\perp := P_{\boldsymbol U_\perp} = \boldsymbol I - P_{\boldsymbol U}$ to represent the projection matrix onto perpendicular subspace.}
\added{Let $\mathbb M_r=\{\boldsymbol X\in\mathbb R^{n_1\times n_2}| \text{rank}(\boldsymbol X)=r\}$ be the set of matrices with fixed rank $r$, which is the smooth submanifold embedded in $\mathbb R^{n_1\times n_2}$.}
Given \deleted{the matrix $\boldsymbol X$ in the manifold of rank $r$, i.e.} $\boldsymbol X\in\mathbb M_r$, $\mathbb T_{\boldsymbol X}\mathbb M_r$ and $\mathbb T_{\boldsymbol X}^\perp\mathbb M_r$ stand for the tangent space and normal space at $\boldsymbol X$.
We denote $\mathcal P_\mathbb S$ as the projection operator to the set $\mathbb S$.
Let the full SVD of $n_1$-by-$n_2$ matrix $\boldsymbol Y$ with $\text{rank}(\boldsymbol Y) = n:=\min(n_1,n_2)$ be $\boldsymbol Y=\boldsymbol U_{\boldsymbol Y} \boldsymbol \Sigma_{\boldsymbol Y} \boldsymbol V_{ \boldsymbol Y}^\top$, where $\boldsymbol U_{\boldsymbol Y}\in\mathbb O^{n_1,n},\boldsymbol V_{\boldsymbol Y}\in\mathbb O^{n_2,n} $ and $\boldsymbol \Sigma_{\boldsymbol Y}=\text{diag}(\sigma_1,\ldots,\sigma_n)$ is diagonal matrix with descending order.
The projection of $\boldsymbol Y$ to $\mathbb M_r$ (a.k.a. truncated SVD) is defined as
\begin{eqnarray}\label{eq:TSVD}
\mathcal P_r(\boldsymbol Y):=[\boldsymbol U_{\boldsymbol Y}]_{:,1:r}[\boldsymbol \Sigma_{\boldsymbol Y}]_{1:r,1:r} [\boldsymbol V_{\boldsymbol Y}]_{:,1:r}^\top\in\mathbb M_r,
\end{eqnarray}
where $[\cdot]_{1:r,1:c}$ represents a submatrix composed of some rows and columns of subscript indexes, which is consistent with the expression of Matlab.
As some common matrix operations, $\|\cdot\|$, $\|\cdot\|_F$, $\text{vec}(\cdot)$ and $\otimes$ denote the spectral norm, Frobenius norm, vectorization and Kronecker product of the matrix, respectively. 
In the low-rank estimation problem, $\boldsymbol E_t=\boldsymbol X_t-\boldsymbol X_\star$ represents the residual between the estimate at the $t$-th iteration and the optimal solution.

The organization of this paper is as follows.
The local asymptotic convergence analysis of algorithm Grad with constant stepsize and the exact line search are presented in Sect.~\ref{sec:convergence-IHT-constant}.
Sect.~\ref{sec:convergence-NAG} discusses the convergence analysis of NAG.
In Sect.~\ref{sec:convergence-Riemannian-optimization}, we establish the Riemannian versions of Grad and NAG, coined RGrad and NARG, and adopt an adaptive restart scheme to improve the convergence rate of NARG heuristically in Sect.~\ref{sec:adaptive-restart}.
Sect.~\ref{sec:numerical-examples} illustrates the effectiveness of NARG by numerical studies.
Sect.~\ref{sec:conclusion} summarizes our work, followed by the proofs in the Appendix.

\section{Local convergence of Grad Algorithm}
\label{sec:convergence-IHT-constant}
For low-rank matrix estimation such as MS and MC, this section gives a unified representation of the gradient descent algorithm.
With the perturbation analysis of truncated SVD, we derive the convergence analysis by the spectral radius of the iterative matrix.

\subsection{Vectorization of the gradient}
Low-rank matrix estimation is usually a type of least-squares problem (\ref{eq:lowrank}) with the fixed-rank constraint, and its gradient can be written as
\begin{eqnarray*}
\nabla f(\boldsymbol X_t) = \mathcal A^*(\mathcal A(\boldsymbol X_t-\boldsymbol X_\star))=\mathcal A^*(\mathcal A(\boldsymbol E_t)),
\end{eqnarray*}
where $\mathcal A^*$ is the adjoint operator of the linear operator $\mathcal A$.
The vectorization of the gradient and residual matrix satisfies the following linear relationship
\begin{eqnarray}\label{eq:vec-grad}
\text{vec}(\nabla f(\boldsymbol X_t)) = \boldsymbol \Theta\text{vec}(\boldsymbol E_t) = \boldsymbol \Theta\boldsymbol e_t,
\end{eqnarray}
where $\boldsymbol \Theta$ is related to the specific estimation task.
Subsequently, we take MC and MS as examples to introduce how to construct the matrix $\boldsymbol \Theta$.

\textbf{MS}:
The linear measurement operator in MS $\mathcal A:\mathbb R^{n_1\times n_2} \mapsto \mathbb R^m$ is defined as follows:
\begin{eqnarray*}
\mathcal A(\boldsymbol X)=[\langle \boldsymbol A_i, \boldsymbol X\rangle]_{1\leq i \leq m}\in\mathbb R^m,
\end{eqnarray*}
where $\{\boldsymbol A_i\in\mathbb R^{n_1\times n_2}\}_{i=1}^m$ is the known matrix set, and its adjoint operator $\mathcal A^*:\mathbb R^m\mapsto \mathbb R^{n_1\times n_2}$ is defined as $\mathcal A^*(\boldsymbol y)=\sum_{i=1}^m \boldsymbol y_i \boldsymbol A_i$.
By vectorization, we have
\begin{eqnarray*}
\text{vec}(\mathcal A^*(\mathcal A(\boldsymbol E_t)))
=\sum_i [\text{vec}(\boldsymbol A_i)^\top\otimes\text{vec}(\boldsymbol A_i)]\text{vec}(\boldsymbol E_t),
\end{eqnarray*}
thus, the matrix $\boldsymbol \Theta$ in MS is expressed as follows:
\begin{eqnarray}\label{eq:Theta-MS}
\begin{aligned}
\boldsymbol \Theta_{\mathsf{MS}}
=\sum_i [\text{vec}(\boldsymbol A_i)^\top\otimes\text{vec}(\boldsymbol A_i)].
\end{aligned}
\end{eqnarray}

\textbf{MC}: 
The purpose of MC is to complete the entire low-rank matrix $\boldsymbol X_\star$ based on partial observations $\mathcal P_\Omega(\boldsymbol X_\star)$.
The corresponding loss function is $f(\boldsymbol X )=\|\mathcal P_\Omega(\boldsymbol X-\boldsymbol X_\star)\|_F^2$, where the projection $\mathcal P_\Omega$ to the observation index subset $\Omega$ is defined as
\begin{eqnarray*}
[\mathcal P_\Omega(\boldsymbol X)]_{i,j}=\begin{cases}
\boldsymbol X_{i,j}, &\text{if}~(i,j)\in\Omega,\\
0,&\text{otherwise}.
\end{cases}
\end{eqnarray*}

MC can be regarded as a variant of MS~\cite{chi2019nonconvex}, and the measurement matrices $\boldsymbol A_{i,j}\in\mathbb R^{n_1\times n_2}$ are set to
\begin{eqnarray*}
\boldsymbol A_{i,j}=\boldsymbol e_i^{n_1}\boldsymbol e_j^{n_2\top}=\begin{cases}
1, &\text{if}~(i,j)\in\Omega,\\
0,&\text{otherwise}.
\end{cases}
\end{eqnarray*}
\added{where $\boldsymbol e_i^{n_1}$ represents the $i$-th column of the identity matrix $\boldsymbol I_{n_1}$.}
Hence, the summation (\ref{eq:Theta-MS}) is equal to the following binary diagonal matrix, whose main diagonal elements are $\boldsymbol \omega=\text{vec}(\sum_{(i,j)\in\Omega} \boldsymbol A_{i,j})\in\mathbb R^{n_1n_2}$
\begin{eqnarray}\label{eq:Theta-MC}
\boldsymbol \Theta_{\mathsf{MC}}=\sum_{(i,j)\in\Omega} [\text{vec}(\boldsymbol A_{i,j})^\top\otimes\text{vec}(\boldsymbol A_{i,j})]=\text{diag}(\boldsymbol \omega).
\end{eqnarray}

We denote the cardinality by $|\Omega|$, i.e., the number of observed elements.
The selection matrix $\boldsymbol S_{\Omega}=(\boldsymbol e_{i_k}^{n_1n_2})\in\mathbb R^{n_1n_2\times |\Omega|}$ is constructed by selecting some columns from the identity matrix whose column indices satisfy $\boldsymbol\omega_{i_k}=1$.
On the contrary, let the complementary set $\bar\Omega$ be unobserved, then the matrix $\boldsymbol S_{\bar\Omega}$ consisting of the remaining columns satisfies
\begin{eqnarray}\label{eq:sample-matrix}
\begin{aligned}
&\boldsymbol S_{\Omega}^\top \boldsymbol S_{\Omega} =\boldsymbol I_{|\Omega|},
\boldsymbol S_{\bar\Omega}^\top \boldsymbol S_{\bar\Omega} =\boldsymbol I_{n_1n_2-|\Omega|},\\
&\boldsymbol S_{\Omega} \boldsymbol S_{\Omega}^\top+\boldsymbol S_{\bar\Omega} \boldsymbol S_{\bar\Omega}^\top=\boldsymbol I_{n_1n_2},\\
&\text{vec}(\mathcal P_{\Omega}(\boldsymbol X))=\boldsymbol S_\Omega \boldsymbol S_\Omega^\top\boldsymbol x=\boldsymbol \Theta_{\mathsf{MC}}\boldsymbol x,\\
&\text{vec}(\mathcal P_{\bar\Omega}(\boldsymbol X))=\boldsymbol S_{\bar\Omega} \boldsymbol S_{\bar\Omega}^\top\boldsymbol x=(\boldsymbol I_{n_1n_2}-\boldsymbol \Theta_{\mathsf{MC}})\boldsymbol x.
\end{aligned}
\end{eqnarray}
\added{where $\boldsymbol x = \text{vec}(\boldsymbol X)$.}

\subsection{Grad Algorithm with constant stepsize}
The Grad Algorithm (see Algorithm~\ref{Alg-IHT}), a.k.a IHT~\cite{jain2010guaranteed}, is a typical projected gradient method for solving (\ref{eq:lowrank}). 
It first performs vanilla gradient descent with a constant stepsize $\mu_t\equiv\mu$, then ensures the low-rank constraint by truncating SVD. 
The Grad Algorithm is illustrated in Fig.~\ref{fig:Grad-Algorithm}.

	\begin{algorithm}[H]
		\caption{Grad Algorithm with constant stepsize}
		\begin{algorithmic}\label{Alg-IHT}
			\REQUIRE observation data $\boldsymbol y_\mathsf{ob}$, rank $r$, maximum iteration $T$, constant stepsize $\mu$,
			\STATE Initialize: $\boldsymbol X_0=\mathcal A^*(\boldsymbol y_{\mathsf{ob}})$,
			\FOR{$t = 0,1,...,T-1$}
			\STATE $\boldsymbol X_{t+1}=\mathcal P_r(\boldsymbol X_{t}-\mu\nabla f(\boldsymbol X_t))$,
			\ENDFOR
			\ENSURE $\boldsymbol X_{T}$.
		\end{algorithmic}
	\end{algorithm}

Qualitative convergence analysis of this algorithm and its variants \replaced{have}{has} been extensively studied, see~\cite{chi2019nonconvex}.
However, the accurate estimate of the convergence rate has not been systematically studied.
A recent framework~\cite{vu2021asymptotic} describes the asymptotic linear convergence of projected gradient descent.
Inspired by this, we combine the gradient and subspace to construct \replaced{a}{an} recursive \replaced{equation}{eqnarray} of vectorized errors, \replaced{where the spectral norm of the iteration matrix}{whose spectral norm} can estimate the local linear convergence of Algorithm~\ref{Alg-IHT}.
It is worth mentioning that this result applies to widespread low-rank models such as MC and MS and can \added{be} further \replaced{extended}{extend} to manifold versions in Sect.~\ref{sec:convergence-Riemannian-optimization}.

Local convergence for constrained least squares~\cite{vu2021asymptotic} requires the Lipschitz-continuous differentiability of the projection operator $\mathcal P_r$.
As an essential tool, the following lemma allows a first-order approximate expansion of the well-known smooth constraint $\mathbb M_r$.
\begin{lemma}[Perturbation Analysis of Truncated SVD~\cite{chunikhina2014performance,vu2019accelerating}]\label{lem:TSVD-Perturbation}
Let \(\boldsymbol X=\boldsymbol U\boldsymbol \Sigma \boldsymbol V^\top\) be SVD of matrix \(\boldsymbol X\) with rank \(r\).
Assuming that the perturbation matrix \(\boldsymbol N\) satisfies \(\|\boldsymbol N\|_F< \sigma_{\replaced{r}{\min}} (\boldsymbol X)/2\), the first-order perturbation expansion of truncated SVD can be formulated as
\begin{eqnarray}\label{eq:TSVD-expansion}
\begin{aligned}
\mathcal P_r(\boldsymbol X+\boldsymbol N)&=\boldsymbol X+\boldsymbol N-P_{\boldsymbol U}^\perp \boldsymbol N P_{\boldsymbol V}^\perp+\mathcal O(\|\boldsymbol N\|_F^2).\\
\end{aligned}
\end{eqnarray}
\end{lemma}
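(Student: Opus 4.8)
The plan is to reduce the statement to a first-order perturbation of the leading singular subspaces of the perturbed matrix. Writing \(\boldsymbol Y=\boldsymbol X+\boldsymbol N\) and letting \(\hat{\boldsymbol P}_L,\hat{\boldsymbol P}_R\) denote the orthogonal projectors onto the top-\(r\) left and right singular subspaces of \(\boldsymbol Y\), one has the exact identity \(\mathcal P_r(\boldsymbol Y)=\hat{\boldsymbol P}_L\,\boldsymbol Y\,\hat{\boldsymbol P}_R\), obtained by expanding \(\boldsymbol Y\) in its full SVD and using orthonormality. Since \(\hat{\boldsymbol P}_L\to P_{\boldsymbol U}\) and \(\hat{\boldsymbol P}_R\to P_{\boldsymbol V}\) as \(\boldsymbol N\to 0\), the whole result will follow from the first-order expansions \(\hat{\boldsymbol P}_L=P_{\boldsymbol U}+\delta\hat{\boldsymbol P}_L+\mathcal O(\|\boldsymbol N\|_F^2)\) and \(\hat{\boldsymbol P}_R=P_{\boldsymbol V}+\delta\hat{\boldsymbol P}_R+\mathcal O(\|\boldsymbol N\|_F^2)\) substituted into this identity.

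First I would certify that the expansion is legitimate. By Weyl's inequality and \(\sigma_{r+1}(\boldsymbol X)=0\), the hypothesis \(\|\boldsymbol N\|_F<\sigma_r(\boldsymbol X)/2\) forces \(\sigma_r(\boldsymbol Y)\ge\sigma_r(\boldsymbol X)-\|\boldsymbol N\|_F>\sigma_r(\boldsymbol X)/2\) and \(\sigma_{r+1}(\boldsymbol Y)\le\|\boldsymbol N\|_F<\sigma_r(\boldsymbol X)/2\), so a strictly positive gap separates the top \(r\) singular values of \(\boldsymbol Y\) from the rest. This guarantees both that \(\mathcal P_r(\boldsymbol Y)\) is the unique best rank-\(r\) approximation and that \(\hat{\boldsymbol P}_L,\hat{\boldsymbol P}_R\) are smooth (indeed real-analytic) functions of \(\boldsymbol N\) in this neighborhood, making the Taylor remainder genuinely \(\mathcal O(\|\boldsymbol N\|_F^2)\). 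I would stress that it is the subspace projectors, not the individual singular vectors, that enter, so repeated singular values cause no difficulty: only the top/bottom gap is required.

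Next I would compute the two differentials. Viewing the left singular subspace as the top-\(r\) eigenspace of \(\boldsymbol Y\boldsymbol Y^\top=\boldsymbol X\boldsymbol X^\top+(\boldsymbol X\boldsymbol N^\top+\boldsymbol N\boldsymbol X^\top)+\mathcal O(\|\boldsymbol N\|_F^2)\) and applying standard first-order eigenprojector perturbation (the bottom eigenvalues being \(0\)), I obtain \(\delta\hat{\boldsymbol P}_L=P_{\boldsymbol U}^\perp\boldsymbol N\boldsymbol V\boldsymbol\Sigma^{-1}\boldsymbol U^\top+\boldsymbol U\boldsymbol\Sigma^{-1}\boldsymbol V^\top\boldsymbol N^\top P_{\boldsymbol U}^\perp\), and symmetrically \(\delta\hat{\boldsymbol P}_R=P_{\boldsymbol V}^\perp\boldsymbol N^\top\boldsymbol U\boldsymbol\Sigma^{-1}\boldsymbol V^\top+\boldsymbol V\boldsymbol\Sigma^{-1}\boldsymbol U^\top\boldsymbol N P_{\boldsymbol V}^\perp\). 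Substituting into \(\mathcal P_r(\boldsymbol Y)=\hat{\boldsymbol P}_L\boldsymbol Y\hat{\boldsymbol P}_R\) and retaining first-order terms, the zeroth order is \(P_{\boldsymbol U}\boldsymbol X P_{\boldsymbol V}=\boldsymbol X\); using \(\boldsymbol X=\boldsymbol U\boldsymbol\Sigma\boldsymbol V^\top\) together with \(P_{\boldsymbol U}^\perp\boldsymbol U=0\) and \(P_{\boldsymbol V}^\perp\boldsymbol V=0\), the two cross terms collapse to \(\delta\hat{\boldsymbol P}_L\boldsymbol X P_{\boldsymbol V}=P_{\boldsymbol U}^\perp\boldsymbol N P_{\boldsymbol V}\) and \(P_{\boldsymbol U}\boldsymbol X\,\delta\hat{\boldsymbol P}_R=P_{\boldsymbol U}\boldsymbol N P_{\boldsymbol V}^\perp\), while the direct term is \(P_{\boldsymbol U}\boldsymbol N P_{\boldsymbol V}\). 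Adding the three and recognizing \(P_{\boldsymbol U}\boldsymbol N P_{\boldsymbol V}+P_{\boldsymbol U}^\perp\boldsymbol N P_{\boldsymbol V}+P_{\boldsymbol U}\boldsymbol N P_{\boldsymbol V}^\perp=\boldsymbol N-P_{\boldsymbol U}^\perp\boldsymbol N P_{\boldsymbol V}^\perp\) yields the claim.

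The main obstacle is the rigorous remainder control underlying the informal differentials: the first-order eigenprojector formulas must be backed by a quantitative bound on the subspace rotation (e.g. via the reduced resolvent or a Davis–Kahan estimate), whose constant is governed by the spectral gap established above; this is precisely where the factor \(1/2\) in the hypothesis earns its keep, by keeping the gap bounded below as \(\boldsymbol N\to 0\). The term-collecting algebra, by contrast, is routine. As a consistency check I would note the geometric reading: the map \(\boldsymbol N\mapsto\boldsymbol N-P_{\boldsymbol U}^\perp\boldsymbol N P_{\boldsymbol V}^\perp\) is exactly the orthogonal projection onto the tangent space \(\mathbb T_{\boldsymbol X}\mathbb M_r\), so the lemma merely expresses that the differential of the metric projection \(\mathcal P_r\) at a point of the smooth manifold \(\mathbb M_r\) equals orthogonal projection onto its tangent space — an alternative, coordinate-free route to the same identity.
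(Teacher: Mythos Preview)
The paper does not actually prove this lemma: it is quoted from the cited references~\cite{chunikhina2014performance,vu2019accelerating} and used as a black box, so there is no in-paper argument to compare against. Your proof is correct and self-contained. The gap argument via Weyl guarantees well-definedness and smoothness of the top-$r$ singular projectors; the eigenprojector differentials you write down are the standard ones; and the term collection is accurate, yielding exactly $\boldsymbol N-P_{\boldsymbol U}^\perp\boldsymbol N P_{\boldsymbol V}^\perp$.

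For context, the closest thing the paper does prove is Lemma~\ref{lem:retraction-Perturbation} (perturbation of the two retractions), and there the method is quite different from yours: rather than perturbing spectral projectors, the authors expand the closed-form expression~(\ref{eq:orth-retraction}) for the orthographic retraction directly, using the Neumann series $(\boldsymbol\Sigma_{\boldsymbol X}+\boldsymbol U_{\boldsymbol X}^\top\boldsymbol N\boldsymbol V_{\boldsymbol X})^{-1}=\boldsymbol\Sigma_{\boldsymbol X}^{-1}-\boldsymbol\Sigma_{\boldsymbol X}^{-1}\boldsymbol U_{\boldsymbol X}^\top\boldsymbol N\boldsymbol V_{\boldsymbol X}\boldsymbol\Sigma_{\boldsymbol X}^{-1}+\mathcal O(\|\boldsymbol N\|_F^2)$ and then collapsing products of $\boldsymbol X$ and $\boldsymbol X^{-\top}$ to $P_{\boldsymbol U_{\boldsymbol X}}$ and $P_{\boldsymbol V_{\boldsymbol X}}$. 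That algebraic route is shorter for the orthographic retraction because an explicit formula is available, whereas your spectral-perturbation route is the natural one for the truncated SVD, where no such closed form exists. Your closing remark --- that the first-order term is precisely the tangent-space projection $\mathcal P_{\mathbb T_{\boldsymbol X}\mathbb M_r}$ --- is exactly the unifying observation the paper records in Lemma~\ref{lem:retraction-Perturbation}.
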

According to the above condition, we roughly judge the region of convergence called the {\em Basin of Attraction}.
\begin{eqnarray}\label{eq:Basin-of-Attraction}
 \|\boldsymbol X_t-\boldsymbol X_\star\|_F\asymp\sigma_{\replaced{r}{\min}}(\boldsymbol X_\star).
\end{eqnarray}
Under Lemma~\ref{lem:TSVD-Perturbation}, once condition (\ref{eq:Basin-of-Attraction}) holds, the subsequent iterations converge linearly, which is stated as follows.
\begin{theorem}[Convergence for Grad with constant stepsize]\label{th:MatIHT}
Let $\lambda_{\max}$ and $\lambda_{\min}$ correspond to the largest and smallest non-zero eigenvalues of $(\boldsymbol I-P_{\boldsymbol V_\star}^\perp\otimes P_{\boldsymbol U_\star}^\perp)\boldsymbol \Theta$, respectively.
\added{The stepsize $\mu$ satisfies $\|\mathcal I-\mu \mathcal A^*\mathcal A\|\leq 1$.}
And set 
\begin{eqnarray}\label{eq:IHT-rho}
\rho=\max(1-\mu \lambda_{\min},\mu \lambda_{\max}-1)\added{\leq 1}.
\end{eqnarray}
When condition (\ref{eq:Basin-of-Attraction}) holds, Algorithm~\ref{Alg-IHT} satisfies
\begin{eqnarray}
\|\boldsymbol X_{t+1}-\boldsymbol X_\star\|_F \leq \rho\|\boldsymbol X_t-\boldsymbol X_\star\|_F.
\end{eqnarray}
\end{theorem}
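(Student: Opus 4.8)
The plan is to linearize the iteration around $\boldsymbol X_\star$ via Lemma~\ref{lem:TSVD-Perturbation}, vectorize the resulting error recursion, and reduce the whole analysis to the spectral norm of a symmetric operator acting on the tangent space $\mathbb T_{\boldsymbol X_\star}\mathbb M_r$. First I would use that $\boldsymbol X_\star\in\mathbb M_r$ gives $\boldsymbol X_\star=\mathcal P_r(\boldsymbol X_\star)$, so the update reads $\boldsymbol X_{t+1}=\mathcal P_r(\boldsymbol X_\star+\boldsymbol N_t)$ with perturbation $\boldsymbol N_t=\boldsymbol E_t-\mu\nabla f(\boldsymbol X_t)=\boldsymbol E_t-\mu\mathcal A^*\mathcal A(\boldsymbol E_t)$. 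Since $\nabla f(\boldsymbol X_t)=\mathcal O(\|\boldsymbol E_t\|_F)$ we have $\|\boldsymbol N_t\|_F=\mathcal O(\|\boldsymbol E_t\|_F)$, so inside the Basin of Attraction (\ref{eq:Basin-of-Attraction}) the hypothesis $\|\boldsymbol N_t\|_F<\sigma_r(\boldsymbol X_\star)/2$ of Lemma~\ref{lem:TSVD-Perturbation} holds and yields
\begin{eqnarray*}
\boldsymbol E_{t+1}=\boldsymbol N_t-P_{\boldsymbol U_\star}^\perp\boldsymbol N_t P_{\boldsymbol V_\star}^\perp+\mathcal O(\|\boldsymbol E_t\|_F^2).
\end{eqnarray*}

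Next I would vectorize, using $\text{vec}(P_{\boldsymbol U_\star}^\perp\boldsymbol N_t P_{\boldsymbol V_\star}^\perp)=(P_{\boldsymbol V_\star}^\perp\otimes P_{\boldsymbol U_\star}^\perp)\text{vec}(\boldsymbol N_t)$ (symmetry of $P_{\boldsymbol V_\star}^\perp$ removes the transpose) together with $\text{vec}(\nabla f(\boldsymbol X_t))=\boldsymbol\Theta\boldsymbol e_t$ from (\ref{eq:vec-grad}). Writing $\boldsymbol P=P_{\boldsymbol V_\star}^\perp\otimes P_{\boldsymbol U_\star}^\perp$, this gives
\begin{eqnarray*}
\boldsymbol e_{t+1}=(\boldsymbol I-\boldsymbol P)(\boldsymbol I-\mu\boldsymbol\Theta)\boldsymbol e_t+\mathcal O(\|\boldsymbol e_t\|_2^2).
\end{eqnarray*}
Here $\boldsymbol I-\boldsymbol P$ is precisely the vectorized orthogonal projection onto $\mathbb T_{\boldsymbol X_\star}\mathbb M_r$. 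Because every iterate $\boldsymbol X_t$ and the target $\boldsymbol X_\star$ lie on $\mathbb M_r$, the residual $\boldsymbol E_t$ is tangent up to a second-order normal part, i.e. $\boldsymbol P\boldsymbol e_t=\mathcal O(\|\boldsymbol e_t\|_2^2)$, so $(\boldsymbol I-\boldsymbol P)\boldsymbol e_t=\boldsymbol e_t$ to leading order and the recursion collapses to $\boldsymbol e_{t+1}=(\boldsymbol I-\mu\boldsymbol M)\boldsymbol e_t+\mathcal O(\|\boldsymbol e_t\|_2^2)$ with $\boldsymbol M=(\boldsymbol I-\boldsymbol P)\boldsymbol\Theta(\boldsymbol I-\boldsymbol P)$ restricted to the tangent space.

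Then I would analyze $\boldsymbol I-\mu\boldsymbol M$. Since $\boldsymbol\Theta\succeq0$ and $\boldsymbol I-\boldsymbol P$ is an orthogonal projection, $\boldsymbol M$ is symmetric positive semidefinite; moreover $\boldsymbol v^\top\boldsymbol M\boldsymbol v=\boldsymbol v^\top\boldsymbol\Theta\boldsymbol v$ for tangent $\boldsymbol v$ shows its eigenvalues are sandwiched by those of $\boldsymbol\Theta$, and its non-zero eigenvalues coincide with those of $(\boldsymbol I-\boldsymbol P)\boldsymbol\Theta$, hence lie in $[\lambda_{\min},\lambda_{\max}]$. Symmetry of $\boldsymbol I-\mu\boldsymbol M$ makes its $\ell_2$ operator norm equal its spectral radius $\max(|1-\mu\lambda_{\min}|,|1-\mu\lambda_{\max}|)$, and a short case analysis in the ordering of $\mu\lambda_{\min},\mu\lambda_{\max}$ relative to $1$ shows this always equals $\rho=\max(1-\mu\lambda_{\min},\mu\lambda_{\max}-1)$. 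The stepsize condition $\|\mathcal I-\mu\mathcal A^*\mathcal A\|\le1$ forces $\mu\lambda_{\max}\le\mu\lambda_{\max}(\boldsymbol\Theta)\le2$, giving $\rho\le1$; combined with $\|\boldsymbol E_t\|_F=\|\boldsymbol e_t\|_2$ this yields $\|\boldsymbol E_{t+1}\|_F\le\rho\|\boldsymbol E_t\|_F$.

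The hardest part will be the passage from this asymptotic linearization to the stated clean one-step contraction: one must verify that the quadratic remainder $\mathcal O(\|\boldsymbol E_t\|_F^2)$ is truly negligible throughout the Basin of Attraction so that it cannot spoil the inequality, and that for an identifiable $\mathcal A$ the operator $\boldsymbol M$ is positive definite on the tangent space (so $\lambda_{\min}>0$ and no spurious unit eigenvalue survives). The structural observation that unlocks everything is recognizing $\boldsymbol I-\boldsymbol P$ as the tangent-space projection: it both confines the error to $\mathbb T_{\boldsymbol X_\star}\mathbb M_r$ to leading order and symmetrizes the restricted iteration operator, turning an a priori non-symmetric spectral-radius estimate into an exact operator-norm bound.
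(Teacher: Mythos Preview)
Your approach is correct and reaches the same conclusion as the paper, but the route diverges at the spectral step. After the identical linearization $\boldsymbol e_{t+1}=(\boldsymbol I-\boldsymbol P)(\boldsymbol I-\mu\boldsymbol\Theta)\boldsymbol e_t+\mathcal O(\|\boldsymbol e_t\|_2^2)$, the paper works directly with the non-symmetric matrix $\boldsymbol H=(\boldsymbol I-\boldsymbol P)(\boldsymbol I-\mu\boldsymbol\Theta)$ and invokes an auxiliary eigenvalue lemma (Lemma~\ref{lem:eigenvalue-part1}) to show that the non-zero eigenvalues of $\boldsymbol H$ are $1-\mu\lambda_i$ with $\lambda_i$ ranging over the non-zero spectrum of $(\boldsymbol I-\boldsymbol P)\boldsymbol\Theta$; the paper then reads off $\rho(\boldsymbol H)$ and appeals to $\|\boldsymbol I-\boldsymbol P\|\cdot\|\boldsymbol I-\mu\boldsymbol\Theta\|\le1$ for the bound $\rho\le1$. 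You instead insert the extra observation $\boldsymbol P\boldsymbol e_t=\mathcal O(\|\boldsymbol e_t\|_2^2)$ (this is exactly Lemma~\ref{lem:subspace project-error} in the paper, though the paper does not use it here) to symmetrize the iteration operator to $\boldsymbol M=(\boldsymbol I-\boldsymbol P)\boldsymbol\Theta(\boldsymbol I-\boldsymbol P)$ on the tangent space, and then use the elementary $AB$--$BA$ fact that $\boldsymbol Q\boldsymbol\Theta\boldsymbol Q$ and $\boldsymbol Q\boldsymbol\Theta$ share their non-zero eigenvalues (since $\boldsymbol Q^2=\boldsymbol Q$). Your symmetrization buys something the paper's proof leaves implicit: because $\boldsymbol I-\mu\boldsymbol M$ is symmetric, its spectral radius \emph{equals} its operator norm, so the one-step contraction $\|\boldsymbol e_{t+1}\|_2\le\rho\|\boldsymbol e_t\|_2$ follows directly, whereas the paper's spectral-radius computation for the non-symmetric $\boldsymbol H$ strictly speaking only gives the asymptotic rate. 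Your caveat about the quadratic remainder is well placed; both proofs ultimately treat condition~(\ref{eq:Basin-of-Attraction}) as licensing the absorption of $\mathcal O(\|\boldsymbol e_t\|_2^2)$ into the linear contraction.
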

See Appendix~\ref{app:Th:MatIHT} for proof.
When $\boldsymbol \Theta$ and $\boldsymbol X_\star$ are fixed, the stepsize $\mu$ affects the convergence rate.
As shown in (\ref{eq:IHT-rho}), we give two special stepsizes,
\begin{eqnarray}\label{eq:IHT-step}
\mu_{\dagger}:=2/(\lambda_{\max}+\lambda_{\min})~\text{and}~ \mu_{\ddagger}:=2/\lambda_{\max},
\end{eqnarray}
corresponding to the optimal convergence rate and upper bound, respectively. 
Obviously, $\mu_\dagger<\mu_{\ddagger}$.
Furthermore, the closer $\mu$ is to $\mu_\dagger$, the faster the convergence.
Conversely, $\rho>1$ holds when $\mu\geq\mu_{\ddagger}$, which means Algorithm~\ref{Alg-IHT} does not converge.

\added{
\begin{remark}\label{remark-Basin}
The premise of satisfying condition (\ref{eq:Basin-of-Attraction}) is the convergence guarantee with proper initialization, which has received extensive investigations; see ~\cite{cai2018exploiting,chen2018harnessing,chi2019nonconvex} and references therein.
On the one hand, some mild statistical assumptions and optimization properties can provide global convergence guarantees for first-order algorithms~\cite{chi2019nonconvex}, such as the number of samples, restricted isometry property, matrix incoherence, and regularity condition.
On the other hand, proper initialization can speed up the process of satisfying the condition (\ref{eq:Basin-of-Attraction}).
Subsequent works also establish guarantees that the iteration sequence satisfies local convergence conditions for spectral initialization~\cite{chen2021spectral} and random initialization~\cite{chen2019gradient}, respectively.
It is worth mentioning that although the local convergence radius is different under different model assumptions, they are all equal to $\sigma_{r}(\boldsymbol X_\star)$ up to a constant, which is the same form as the condition (\ref{eq:Basin-of-Attraction}).
\end{remark}
}

\begin{remark}[$\mu=1$ for MC]
In this case, the iteration can be simplified to
\begin{eqnarray*}
\boldsymbol X_{t+1}=\mathcal P_{\bar\Omega}(\mathcal P_r(\boldsymbol X_{t}))+\boldsymbol X_{\mathsf{ob}}, ~\text{where}~\boldsymbol X_{\mathsf{ob}} = \mathcal P_{\Omega}(\boldsymbol X_\star),
\end{eqnarray*}
And the convergence rate is $\rho=1-\lambda_{\min}$, which is consistent with \cite{vu2019accelerating,vu2021local}, due to $\lambda(\boldsymbol S_{\bar\Omega}^\top (P_{\boldsymbol V_\star}^\perp\otimes P_{\boldsymbol U_\star}^\perp)\boldsymbol S_{\bar\Omega})=\sigma^2(\boldsymbol S_{\bar\Omega} (\boldsymbol V_\perp\otimes \boldsymbol U_\perp))$.
\end{remark}
\begin{remark}[Optimal convergence rate]\label{remark-opt}
When $\mu=\mu_\dagger$, the convergence rate $\rho=1-\mu_\dagger\lambda_{\min}$ is theoretically optimal, i.e., 
\begin{eqnarray*}
\|\boldsymbol X_t-\boldsymbol X_\star\|_F\leq\left(\frac{\kappa-1}{\kappa+1}\right)^t\|\boldsymbol X_0-\boldsymbol X_\star\|_F.
\end{eqnarray*}
where $\kappa=\lambda_{\max }/\lambda_{\min}$ is the condition number \added{of matrix $(\boldsymbol I-P_{\boldsymbol V_\star}^\perp\otimes P_{\boldsymbol U_\star}^\perp)\boldsymbol \Theta$}.
\end{remark}
\begin{remark}[Relation to general optimization problems]\label{remark-general}
In fact, for a general $\alpha$-strongly convex and $\beta$-smooth function, the convergence rate of the gradient method with stepsize $\mu=\frac{2}{\alpha+\beta}$ is $\frac{\kappa_f- 1}{\kappa_f+1}$, where $\kappa_f=\replaced{\frac{\beta}{\alpha}}{\frac{\alpha}{\beta}}$ is the condition number of the loss function.
Similarly, due to the non-expansiveness of projection, it also holds for a class of closed convex-constrained optimization problems.
However, it does not consider the geometric properties of constraints.
In contrast, Theorem~\ref{lem:TSVD-Perturbation} takes full advantage of subspaces of the low-rank constraint.
Particularly, when not restricting low-rank constraint, i.e., $P_{\boldsymbol V_\star}^\perp\otimes P_{\boldsymbol U_\star}^\perp=\boldsymbol O_{n_1n_2}$, the convergence rate degenerates to $\rho=\frac{\kappa_f-1}{\kappa_f+1}$.
\end{remark}

However, the optimal stepsize $\mu_\dagger$ requires the singular matrix pair $(\boldsymbol U_\star, \boldsymbol V_\star)$ to be known in advance, which is not practical in application.
Some heuristic adaptive stepsize approaches, such as Normalized IHT (NIHT)~\cite{tanner2013normalized}, have shown effectiveness in theory and practice, motivating us to estimate the convergence rate of exact line search under the low-rank constraint.

\subsection{Grad Algorithm with exact line search}
\label{sec:convergence-Grad}

Lemma~\ref{lem:TSVD-Perturbation} asserts that the low-rank matrix constraint can be locally transformed into linear constraints in subspace form.
Once condition (\ref{eq:Basin-of-Attraction}) is satisfied, applying orthogonal projection to gradient also achieves the same linear convergence rate
\begin{eqnarray}\label{eq:proj-gradient}
\nabla_{\mathcal R} f(\boldsymbol X)=P_{\boldsymbol U_{\boldsymbol X}}\nabla f(\boldsymbol X)+\nabla f(\boldsymbol X)P_{\boldsymbol V_{\boldsymbol X}}-P_{\boldsymbol U_{\boldsymbol X}}\nabla f(\boldsymbol X)P_{\boldsymbol V_{\boldsymbol X}}.
\end{eqnarray}
Similar to Lemma~\ref{lem:TSVD-Perturbation}, by the orthogonal relationship of the projection, we get
\begin{eqnarray*}
\mathcal P_r(\boldsymbol X_t-\mu_t\nabla f(\boldsymbol X_t))=\mathcal P_r(\boldsymbol X_t-\mu_t\nabla_{\mathcal R} f(\boldsymbol X_t))+\mathcal O(\|\boldsymbol E_{t}\|_F^2).
\end{eqnarray*}
In fact, $\nabla_{\mathcal R} f(\boldsymbol X)$ is the Riemannian gradient, which will be analyzed in Sect.~\ref{sec:convergence-Riemannian-optimization}.
Replacing $\nabla f(\boldsymbol X)$ with $\nabla_{\mathcal R} f(\boldsymbol X)$, we get a first-order approximation
\begin{eqnarray*}
\begin{aligned}
\boldsymbol X_{t+1}&=\mathcal P_r(\boldsymbol X_{t}-\mu_t\nabla_\mathcal R f(\boldsymbol X_t))\\
&\stackrel{(a)}{=}\boldsymbol X_{t}-\mu_t\nabla_\mathcal R f(\boldsymbol X_t)-P_{\boldsymbol U_\star}^\perp \boldsymbol E_{t}P_{\boldsymbol V_\star}^\perp+\mathcal O(\|\boldsymbol E_{t}\|_F^2)\\
&\stackrel{(b)}{=}\boldsymbol X_{t}-\mu_t\nabla_\mathcal R f(\boldsymbol X_t)+\mathcal O(\|\boldsymbol E_{t}\|_F^2)\\
&\stackrel{(c)}{\approx}\boldsymbol X_{t}-\mu_t\nabla_\mathcal R f(\boldsymbol X_t),
\end{aligned}
\end{eqnarray*}
where $(a)$ is similar to Appendix~\ref{app:Th:MatIHT} and $(b)$ uses \deleted{the} Lemma~\ref{lem:subspace project-error}.
The process $(c)$ approximates the optimization problem with low-rank \replaced{constraint}{constrain} into an unconstrained quadratic problem based on subspaces.
The inner product property of the linear operator $\mathcal A$ can convert the loss function (\ref{eq:lowrank}) into vector form
\begin{eqnarray*}
\begin{aligned}
f(\boldsymbol X)&=\frac{1}{2}\|\mathcal A(\boldsymbol X)-\boldsymbol y\|_2^2
=\frac{1}{2}\|\mathcal A(\boldsymbol X-\boldsymbol X_\star)\|_2^2
=\frac{1}{2}\langle\mathcal A(\boldsymbol X-\boldsymbol X_\star),\mathcal A(\boldsymbol X-\boldsymbol X_\star)\rangle\\
&=\frac{1}{2}\langle \boldsymbol X-\boldsymbol X_\star,\mathcal A^*(\mathcal A(\boldsymbol X-\boldsymbol X_\star))\rangle
=\frac{1}{2}(\boldsymbol x-\boldsymbol x_\star)^\top \boldsymbol \Theta (\boldsymbol x-\boldsymbol x_\star).
\end{aligned}
\end{eqnarray*}

Exact line search aims to minimize $f(\boldsymbol X_t -\mu \nabla_\mathcal R f(\boldsymbol X_t))$ w.r.t. $\mu$.
By vectorization, the adaptive stepsize corresponds to the following problem
\begin{eqnarray*}
\mu_t =\operatorname*{argmin}_\mu \frac{1}{2}(\boldsymbol x_t -\mu \nabla_{\mathcal R} f(\boldsymbol x_t)-\boldsymbol x_\star)^\top\boldsymbol \Theta (\boldsymbol x_t -\mu \nabla_{\mathcal R} f(\boldsymbol x_t)-\boldsymbol x_\star).
\end{eqnarray*}
This problem is quadratic and convex w.r.t. $\mu$, and its explicit solution is easy to obtain by the properties of $\nabla_\mathcal R f(\boldsymbol x_t)$ and $\boldsymbol \Theta$ as
\begin{eqnarray}\label{eq:mu-close-form}
\mu_t = \frac{\nabla_\mathcal R f(\boldsymbol x_t)^\top \nabla_\mathcal R f(\boldsymbol x_t)}{\nabla_\mathcal R f(\boldsymbol x_t)^\top \boldsymbol \Theta \nabla_\mathcal R f(\boldsymbol x_t)}=\frac{\|\nabla_{\mathcal R} f(\boldsymbol X_t)\|_F^2}{\|\mathcal A(\nabla_{\mathcal R} f(\boldsymbol X_t))\|_2^2}.
\end{eqnarray}
The above stepsize is consistent with the Riemannian setting~\cite{vandereycken2013low}.
As mentioned previously, the local landscape of the low-rank matrix estimate is equivalent to a quadratic problem.
For the latter, the zigzag trajectory phenomenon is inseparable from the asymptotic property of the following lemma.
\begin{lemma}[Exact line search~\cite{gonzaga2016steepest,luenberger2021linear,huang2022asymptotic}]\label{lem:Asymptotic-Convergence-Quadratic}
For an unconstrained quadratic optimization problem
\begin{eqnarray*}
\min_{\boldsymbol x\in\mathbb R^n} f(\boldsymbol x)=\frac{1}{2} (\boldsymbol x-\boldsymbol x_\star)^\top \boldsymbol Q(\boldsymbol x-\boldsymbol x_\star),
\end{eqnarray*}
the gradient descent algorithm with exact line search is iterated as follows:
\begin{eqnarray*}
\boldsymbol x_{t+1}=\boldsymbol x_{t}-  \mu_t\nabla f(\boldsymbol x_t), 
~\text{where}~ \mu_t = \frac{\nabla f(\boldsymbol x_t)^\top \nabla f(\boldsymbol x_t)}{\nabla f(\boldsymbol x_t)^\top \boldsymbol Q \nabla f(\boldsymbol x_t)}.
\end{eqnarray*}
Then the stepsize sequence $\{\mu_t\}$ is oscillating and satisfies $(\mu_{2k-1},\mu_{2k})\to(\hat \mu,\check\mu)$ with asymptotic behaviour $\hat\mu^{-1}+\check\mu^{-1}=\lambda_{\max}(\boldsymbol Q)+\lambda_{\min}( \boldsymbol Q)$.
Moreover, the function value satisfies
\begin{eqnarray}\label{eq:Convergence-Quadratic}
f(\boldsymbol x_{t})\leq\left(\frac{\kappa_{\boldsymbol Q}-1}{\kappa_{\boldsymbol Q}+1}\right)^{2t}f(\boldsymbol x_0),
\end{eqnarray}
where $\kappa_{\boldsymbol Q}:=\lambda_{\max}(\boldsymbol Q)/\lambda_{\min}(\boldsymbol Q)$ is the condition number of the matrix $\boldsymbol Q$.
\end{lemma}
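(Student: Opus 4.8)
The plan is to handle the two assertions by different mechanisms: the geometric decay of the objective value follows from a one-step contraction estimate via the Kantorovich inequality, whereas the limiting two-cycle of the stepsizes requires an asymptotic analysis of the steepest-descent map in the eigenbasis of $\boldsymbol Q$.

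First I would write $\boldsymbol g_t := \nabla f(\boldsymbol x_t) = \boldsymbol Q(\boldsymbol x_t-\boldsymbol x_\star)$ and record the two identities $f(\boldsymbol x_t)=\tfrac12\boldsymbol g_t^\top \boldsymbol Q^{-1}\boldsymbol g_t$ and, after inserting $\mu_t=(\boldsymbol g_t^\top\boldsymbol g_t)/(\boldsymbol g_t^\top\boldsymbol Q\boldsymbol g_t)$ into $f(\boldsymbol x_t-\mu_t\boldsymbol g_t)$,
\[
f(\boldsymbol x_{t+1})=f(\boldsymbol x_t)\left(1-\frac{(\boldsymbol g_t^\top\boldsymbol g_t)^2}{(\boldsymbol g_t^\top\boldsymbol Q\boldsymbol g_t)(\boldsymbol g_t^\top\boldsymbol Q^{-1}\boldsymbol g_t)}\right).
\]
The Kantorovich inequality bounds the fraction from below by $4\lambda_{\max}\lambda_{\min}/(\lambda_{\max}+\lambda_{\min})^2$, so the bracket is at most $\big((\kappa_{\boldsymbol Q}-1)/(\kappa_{\boldsymbol Q}+1)\big)^2$; iterating this one-step bound yields (\ref{eq:Convergence-Quadratic}). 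This half is routine once the Kantorovich estimate is invoked.

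For the oscillation claim I would diagonalize $\boldsymbol Q=\boldsymbol P\boldsymbol\Lambda\boldsymbol P^\top$ and work with the gradient components $g_{t,i}$ along each eigenvector, so that the recursion $\boldsymbol g_{t+1}=(\boldsymbol I-\mu_t\boldsymbol Q)\boldsymbol g_t$ becomes $g_{t+1,i}=(1-\mu_t\lambda_i)g_{t,i}$ with $\mu_t=\sum_i g_{t,i}^2/\sum_i\lambda_i g_{t,i}^2$. Normalizing the gradient onto the unit sphere induces a smooth self-map whose periodic points govern the asymptotics; the Forsythe--Akaike analysis shows that, generically, the normalized iterates are attracted to a two-point cycle supported on the span of the eigenvectors of $\lambda_{\max}$ and $\lambda_{\min}$. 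The main obstacle is precisely this step: proving that the components along the intermediate eigenvalues become asymptotically negligible and that the induced map on the reduced two-dimensional sphere has an attracting period-two orbit, which is the technical core established in \cite{gonzaga2016steepest,huang2022asymptotic}.

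Granting the two-cycle, I would close the argument by a purely algebraic computation. In the two-dimensional limiting subspace with eigenvalues $\lambda_1:=\lambda_{\min}$ and $\lambda_2:=\lambda_{\max}$, write the two alternating gradient directions and impose the cycle condition $(1-\check\mu\lambda_1)(1-\hat\mu\lambda_1)=(1-\check\mu\lambda_2)(1-\hat\mu\lambda_2)$, which expresses that applying the stepsize $\hat\mu$ followed by $\check\mu$ returns the gradient to its original direction. Expanding and cancelling the common factor $(\lambda_2-\lambda_1)\neq 0$ collapses this to $\hat\mu+\check\mu=\hat\mu\check\mu(\lambda_{\min}+\lambda_{\max})$, i.e. $\hat\mu^{-1}+\check\mu^{-1}=\lambda_{\max}+\lambda_{\min}$, as claimed. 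The oscillation of $\{\mu_t\}$ is then immediate, since consecutive stepsizes converge to the two distinct values $\hat\mu,\check\mu$ of the attracting cycle.
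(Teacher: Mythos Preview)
Your proposal is correct, and the paper in fact does not prove this lemma at all: it is stated as a cited result from \cite{gonzaga2016steepest,luenberger2021linear,huang2022asymptotic}, with the sole comment (Remark~\ref{remark-Convergence-Quadratic}) that inequality~(\ref{eq:Convergence-Quadratic}) ``can be proved by Kantorovich's inequality,'' which is exactly your first half. Your treatment of the two-cycle asymptotics via the Forsythe--Akaike reduction and the algebraic identity $(1-\check\mu\lambda_1)(1-\hat\mu\lambda_1)=(1-\check\mu\lambda_2)(1-\hat\mu\lambda_2)$ is the standard route taken in the cited references and goes well beyond what the paper itself supplies.
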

\begin{remark}[Convergence for the quadratic problem]\label{remark-Convergence-Quadratic}
The inequality (\ref{eq:Convergence-Quadratic}) can be proved by Kantorovich's inequality.
Let the eigenvectors corresponding to the largest and smallest eigenvalues of the matrix $\boldsymbol Q$ be $\boldsymbol v_1, \boldsymbol v_n$, respectively.
Then the equality in (\ref{eq:Convergence-Quadratic}) holds if and only the gradient $\nabla f(\boldsymbol x_0)=\boldsymbol Q(\boldsymbol x_0-\boldsymbol x_\star)$ at the initial point $\boldsymbol x_0$ can be expressed as a linear combination $k_1\boldsymbol v_1+k_n\boldsymbol v_n$ with $|k_1/k_n|=1$.
In this case, it is easy to get $\hat\mu=\check\mu=2/(\lambda_{\max}(\boldsymbol Q)+\lambda_{\min}(\boldsymbol Q))$, which indicates that the worst-case convergence of the gradient method with exact line search is equivalent to that based on the optimal constant stepsize.
Therefore, $(\frac{\kappa_{\boldsymbol Q}-1}{\kappa_{\boldsymbol Q}+1})^2$ is a rough upper bound for the judgment of the convergence rate.
To characterize the convergence rate finely, we substitute the asymptotic property of the stepsize into the spectral radius of $\boldsymbol I-\mu_t \boldsymbol Q$ to obtain
\begin{eqnarray}\label{eq:quadratic-theta}
\begin{aligned}
\bar\rho := \sqrt{\rho(\boldsymbol I-\hat\mu \boldsymbol Q)\rho(\boldsymbol I-\check\mu \boldsymbol Q)}
\approx \sqrt{1-\frac{\tilde\mu^2\lambda_{\max}(\boldsymbol Q)\lambda_{\min}(\boldsymbol Q)}{\tilde\mu(\lambda_{\max}(\boldsymbol Q)+\lambda_{\min}(\boldsymbol Q))-1}},
\end{aligned}
\end{eqnarray}
where $\tilde\mu = \frac{\nabla f(\boldsymbol x_0)^\top \nabla f(\boldsymbol x_0)}{\nabla f(\boldsymbol x_0)^\top \boldsymbol Q \nabla f(\boldsymbol x_0)}$ is related to the initial point $\boldsymbol x_0$.
\end{remark}

We design the following Algorithm~\ref{Alg-Grad} based on the exact line search.

	\begin{algorithm}[H]
		\caption{Grad Algorithm with exact line search}
		\begin{algorithmic}\label{Alg-Grad}
			\REQUIRE observation $\boldsymbol y_\mathsf{ob}$, rank $r$, maximum iteration $T$.
			\STATE Initialize: $\boldsymbol X_0=\mathcal A^*(\boldsymbol y_{\mathsf{ob}})$.
			\FOR{$t = 0,1,...,T-1$}
			\STATE compute $\nabla_{\mathcal R} f(\boldsymbol X_t)=\nabla f(\boldsymbol X_t)-P_{\boldsymbol U_t}^\perp\nabla f(\boldsymbol X_t)P_{\boldsymbol V_t}^\perp$,
			\STATE exact line search rule $\mu_t=\frac{\|\nabla_{\mathcal R} f(\boldsymbol X_t)\|_F^2}{\|\mathcal A(\nabla_{\mathcal R} f(\boldsymbol X_t))\|_2^2}$,
			\STATE $\boldsymbol X_{t+1}=\mathcal P_r(\boldsymbol X_{t}-\mu_t\nabla_{\mathcal R} f(\boldsymbol X_t))$,
			\ENDFOR
			\ENSURE $\boldsymbol X_{T}$.
		\end{algorithmic}
	\end{algorithm}
	
Similar to the asymptotic property of exact line search in Lemma~\ref{lem:Asymptotic-Convergence-Quadratic}, the following proposition uses the spectral radius to estimate the convergence rate of Algorithm~\ref{Alg-Grad} accurately.
\begin{proposition}[Convergence for Grad with exact line search]\label{prop:NIHT-Convergence}
Let $\lambda_{\max}$ and $\lambda_{\min}$ correspond to the largest and smallest non-zero eigenvalues of $(\boldsymbol I-P_{\boldsymbol V_\star}^\perp\otimes P_{\boldsymbol U_\star}^\perp)\boldsymbol \Theta$, respectively.
The condition number is denoted as $ \kappa:=\lambda_{\max}/\lambda_{\min}$.
When condition (\ref{eq:Basin-of-Attraction}) holds, the function value of Algorithm~\ref{Alg-Grad} satisfies
\begin{eqnarray*}
f(\boldsymbol X_{t+1})\leq\left(\frac{\kappa-1}{\kappa+1}\right)^{2}f(\boldsymbol X_t).
\end{eqnarray*}
Furthermore, the residual satisfies the following recursion
\begin{eqnarray*}
\|\boldsymbol X_{t+1}-\boldsymbol X_\star\|_F \leq \sqrt{1-\frac{\tilde\mu^2\lambda_{\max}\lambda_{\min}}{\tilde\mu(\lambda_{\max}+\lambda_{\min})-1}}\|\boldsymbol X_{t}-\boldsymbol X_\star\|_F,
\end{eqnarray*}
where $\tilde\mu = \frac{\|\nabla_{\mathcal R} f(\boldsymbol X_t)\|_F^2}{\|\mathcal A(\nabla_{\mathcal R} f(\boldsymbol X_t))\|_2^2}\in[\lambda_{\max}^{-1},\lambda_{\min}^{-1}]$.
\end{proposition}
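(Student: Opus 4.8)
The plan is to show that, within the Basin of Attraction~(\ref{eq:Basin-of-Attraction}), Algorithm~\ref{Alg-Grad} is to first order exactly steepest descent with exact line search on a fixed unconstrained quadratic whose Hessian has spectrum $[\lambda_{\min},\lambda_{\max}]$, and then to read off both claims from Lemma~\ref{lem:Asymptotic-Convergence-Quadratic} and Remark~\ref{remark-Convergence-Quadratic}. First I would reuse the expansion $(a)$--$(c)$ already derived before the statement: under Lemma~\ref{lem:TSVD-Perturbation} the update obeys $\boldsymbol X_{t+1}=\boldsymbol X_t-\mu_t\nabla_{\mathcal R}f(\boldsymbol X_t)+\mathcal O(\|\boldsymbol E_t\|_F^2)$. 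Vectorizing with $\boldsymbol g_t=\mathrm{vec}(\nabla_{\mathcal R}f(\boldsymbol X_t))$ and recalling $\nabla f(\boldsymbol X_t)=\mathcal A^*\mathcal A(\boldsymbol E_t)$, the Riemannian gradient is the tangent-space projection of the Euclidean one, so to leading order $\boldsymbol g_t=\boldsymbol P_{\mathbb T}\boldsymbol\Theta\boldsymbol e_t$ with $\boldsymbol P_{\mathbb T}:=\boldsymbol I-P_{\boldsymbol V_\star}^\perp\otimes P_{\boldsymbol U_\star}^\perp$; here I freeze the iterate-dependent subspaces $(\boldsymbol U_t,\boldsymbol V_t)$ at $(\boldsymbol U_\star,\boldsymbol V_\star)$ and replace $\boldsymbol e_t$ by $\boldsymbol P_{\mathbb T}\boldsymbol e_t$, both being second-order modifications because the normal part $P_{\boldsymbol U_\star}^\perp\boldsymbol E_tP_{\boldsymbol V_\star}^\perp=\mathcal O(\|\boldsymbol E_t\|_F^2)$ by Lemma~\ref{lem:subspace project-error}.

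Next I would introduce the symmetric operator $\boldsymbol Q:=\boldsymbol P_{\mathbb T}\boldsymbol\Theta\boldsymbol P_{\mathbb T}$ acting on $\mathrm{range}(\boldsymbol P_{\mathbb T})$, whose nonzero spectrum is $[\lambda_{\min},\lambda_{\max}]$ with condition number $\kappa$ by the definition of $\lambda_{\max},\lambda_{\min}$. Since $\boldsymbol g_t\in\mathrm{range}(\boldsymbol P_{\mathbb T})$, the exact-line-search rule~(\ref{eq:mu-close-form}) reads $\mu_t=\boldsymbol g_t^\top\boldsymbol g_t/(\boldsymbol g_t^\top\boldsymbol Q\boldsymbol g_t)$, which is precisely the exact-line-search stepsize of Lemma~\ref{lem:Asymptotic-Convergence-Quadratic} applied to $f(\boldsymbol x)=\tfrac12(\boldsymbol x-\boldsymbol x_\star)^\top\boldsymbol Q(\boldsymbol x-\boldsymbol x_\star)$; being a reciprocal Rayleigh quotient of $\boldsymbol Q$ it satisfies $\tilde\mu=\mu_t\in[\lambda_{\max}^{-1},\lambda_{\min}^{-1}]$. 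Invoking Lemma~\ref{lem:Asymptotic-Convergence-Quadratic} with $\kappa_{\boldsymbol Q}=\kappa$ then gives the function-value decay $f(\boldsymbol X_{t+1})\le(\tfrac{\kappa-1}{\kappa+1})^2f(\boldsymbol X_t)$, settling the first claim.

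For the residual I would read off the linearized recursion $\boldsymbol e_{t+1}=(\boldsymbol I-\mu_t\boldsymbol Q)\boldsymbol e_t+\mathcal O(\|\boldsymbol E_t\|_F^2)$, so that $\|\boldsymbol X_{t+1}-\boldsymbol X_\star\|_F\le\rho(\boldsymbol I-\mu_t\boldsymbol Q)\|\boldsymbol X_t-\boldsymbol X_\star\|_F$ since $\boldsymbol Q$ is symmetric. To sharpen the spectral radius to the stated rate I would use the asymptotic two-cycle of Lemma~\ref{lem:Asymptotic-Convergence-Quadratic} and Remark~\ref{remark-Convergence-Quadratic}: the stepsizes converge to $(\hat\mu,\check\mu)$ with $\hat\mu^{-1}+\check\mu^{-1}=\lambda_{\max}+\lambda_{\min}$, whence $\check\mu=\tilde\mu/(\tilde\mu(\lambda_{\max}+\lambda_{\min})-1)$ when $\hat\mu=\tilde\mu$. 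The decisive step is the identity
\begin{eqnarray*}
(1-\hat\mu\lambda_{\min})(1-\check\mu\lambda_{\min})=(1-\hat\mu\lambda_{\max})(1-\check\mu\lambda_{\max})=\frac{(1-\tilde\mu\lambda_{\min})(\tilde\mu\lambda_{\max}-1)}{\tilde\mu(\lambda_{\max}+\lambda_{\min})-1},
\end{eqnarray*}
which shows the error contracts by the same factor in the $\lambda_{\min}$ and $\lambda_{\max}$ eigendirections over one period; expanding the common numerator yields the per-step geometric-mean rate $\bar\rho=\sqrt{1-\frac{\tilde\mu^2\lambda_{\max}\lambda_{\min}}{\tilde\mu(\lambda_{\max}+\lambda_{\min})-1}}$ of~(\ref{eq:quadratic-theta}), establishing the second claim.

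I expect the main obstacle to be the rigorous control of the second-order terms in the first paragraph — justifying that freezing $(\boldsymbol U_t,\boldsymbol V_t)$ at $(\boldsymbol U_\star,\boldsymbol V_\star)$ and discarding the normal component of $\boldsymbol E_t$ disturbs the linear recursion only at order $\mathcal O(\|\boldsymbol E_t\|_F^2)$, so that the single fixed quadratic with Hessian $\boldsymbol Q$ truly governs the local dynamics. A secondary subtlety is that the single-$\tilde\mu$ residual bound is intrinsically the asymptotic two-step (geometric-mean) contraction attained only along the limit cycle, so I must verify it is independent of which of $\hat\mu,\check\mu$ is substituted for $\tilde\mu$, as the identity above confirms.
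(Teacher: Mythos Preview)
Your proposal is correct and aligns with the paper's strategy, but the execution differs in one respect worth noting. For the function-value bound the paper does not invoke Lemma~\ref{lem:Asymptotic-Convergence-Quadratic} as a black box; instead it expands $f(\boldsymbol x_{t+1})$ directly, writes the decrement as a ratio of quadratic forms in $\nabla_{\mathcal R}f(\boldsymbol x_t)$, and bounds that ratio via a generalized Kantorovich inequality applied to the symmetric operator $\boldsymbol P\boldsymbol\Theta\boldsymbol P$ (with $\boldsymbol P=\boldsymbol I-P_{\boldsymbol V_\star}^\perp\otimes P_{\boldsymbol U_\star}^\perp$). The paper then argues, via a norm inequality, that $\kappa(\boldsymbol P\boldsymbol\Theta\boldsymbol P)\le\kappa(\boldsymbol P\boldsymbol\Theta)=\kappa$. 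Your route---identify the fixed quadratic with Hessian $\boldsymbol Q=\boldsymbol P\boldsymbol\Theta\boldsymbol P$ and cite Lemma~\ref{lem:Asymptotic-Convergence-Quadratic}---is cleaner, and your implicit claim that the nonzero spectra of $\boldsymbol P\boldsymbol\Theta$ and $\boldsymbol P\boldsymbol\Theta\boldsymbol P$ coincide (since any nonzero eigenvector of either lies in $\mathrm{range}(\boldsymbol P)$) is in fact sharper than the paper's inequality. For the residual bound the paper's appendix does not give a separate argument; it effectively imports~(\ref{eq:quadratic-theta}) from Remark~\ref{remark-Convergence-Quadratic}, which is exactly what you spell out with the two-cycle identity. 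Your anticipated obstacles are the right ones and are handled the same way in the paper (Lemmas~\ref{lem:Wedin} and~\ref{lem:subspace project-error} absorb the subspace-freezing error into $\mathcal O(\|\boldsymbol E_t\|_F^2)$).
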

See Appendix~\ref{app:prop:NIHT-Convergence} for proof.
By a simple algebraic inequality, we have
\begin{eqnarray*}
\sqrt{1-\frac{\tilde\mu^2\lambda_{\max}\lambda_{\min}}{\tilde\mu(\lambda_{\max}+\lambda_{\min})-1}}\leq\frac{\kappa-1}{\kappa+1},
\end{eqnarray*}
which means that the worst convergence rate of exact line search is precisely that of the optimal constant stepsize, see Remark\added{s}~\ref{remark-opt} and~\ref{remark-Convergence-Quadratic}.

\begin{remark}[Related Work]\label{remark-NIHT}
The stepsize we use differs from the classic NIHT~\cite{tanner2013normalized} in the projection direction.
NIHT uses the projection matrix composed of the first $r$ left and right singular vectors as the search restriction direction to improve the correction of singular values, as follows:
\begin{eqnarray*}
\begin{aligned}
\mu_t^u:=\frac{\|P_{\boldsymbol U_t}\nabla f(\boldsymbol X_t)\|_F^2}{\|\mathcal A(P_{\boldsymbol U_t}\nabla f(\boldsymbol X_t))\|_2^2},
\mu_t^v:=\frac{\|\nabla f(\boldsymbol X_t)P_{\boldsymbol V_t}\|_F^2}{\|\mathcal A(\nabla f(\boldsymbol X_t)P_{\boldsymbol V_t})\|_2^2},
\mu_t^{uv}:=\frac{\|P_{\boldsymbol U_t}\nabla f(\boldsymbol X_t)P_{\boldsymbol V_t}\|_F^2}{\|\mathcal A(P_{\boldsymbol U_t}\nabla f(\boldsymbol X_t)P_{\boldsymbol V_t})\|_2^2}.
\end{aligned}
\end{eqnarray*}
In contrast, the projection gradient (\ref{eq:proj-gradient}) takes into account all three directions to set the stepsize (\ref{eq:mu-close-form}).
\end{remark}

\section{Local convergence of NAG Algorithm}
\label{sec:convergence-NAG}

In this section, we will improve the local linear convergence rate of the Grad algorithm by introducing momentum.
As demonstrated in the following iterations, the core of NAG is to use the extrapolated trend generated by momentum to speed up the first-order optimization method
\begin{eqnarray}\label{eq:Euclidean-Nesterov}
\begin{aligned}
\boldsymbol Y_{t+1}&=\boldsymbol X_{t+1}+\eta_t(\boldsymbol X_{t+1}-\boldsymbol X_{t}),\\
\boldsymbol X_{t+1}&=\boldsymbol Y_{t+1}-\mu_t\nabla f(\boldsymbol Y_{t+1}).
\end{aligned}
\end{eqnarray}
This idea is widely used in various fields and has fascinating interpretations, such as variational framework~\cite{wibisono2016variational}, integral quadratic constraint~\cite{lessard2016analysis}.
Similar to~\cite{vu2019accelerating}, we employ this general acceleration technique for low-rank matrix estimation and obtain the following algorithm.
An illustration of the NAG is shown in Fig.~\ref{fig:NAG}.

	\begin{algorithm}[H]
		\caption{Nesterov's Accelerated Gradient (NAG)}
		\begin{algorithmic}\label{Alg-NAG}
			\REQUIRE observation $\boldsymbol y_\mathsf{ob}$, rank $r$, maximum iteration $T$.
			\STATE Initialize: $\boldsymbol X_{-1}=\boldsymbol X_{0}=\mathcal A^*(\boldsymbol y_{\mathsf{ob}})$.
			\FOR{$t = 0,1,...,T-1$}
			\STATE compute extrapolation: $\boldsymbol Y_{t}=\boldsymbol X_{t}+\eta_t(\boldsymbol X_{t}-\boldsymbol X_{t-1})$ with proper $\eta_t$,
			\STATE compute $\nabla_{\mathcal R} f(\boldsymbol Y_t)=\nabla f(\boldsymbol Y_t)-P_{\boldsymbol U_t}^\perp\nabla f(\boldsymbol Y_t)P_{\boldsymbol V_t}^\perp$,
			\STATE exact line search rule $\mu_t=\frac{\|\nabla_{\mathcal R} f(\boldsymbol Y_t)\|_F^2}{\|\mathcal A(\nabla_{\mathcal R} f(\boldsymbol Y_t))\|_2^2}$,
			\STATE $\boldsymbol X_{t+1}=\mathcal P_r(\boldsymbol Y_{t}-\mu_t\nabla_{\mathcal R} f(\boldsymbol Y_t))$,
			\ENDFOR
			\ENSURE $\boldsymbol X_{T}$.
		\end{algorithmic}
	\end{algorithm}

We analyze the relationship between the spectral radius of the iterative matrix and $(\mu_t,\eta_t)$ in detail and generalize the convergence analysis in~\cite{kim2018adaptive} to low-rank matrix estimation as the following theorem.
\begin{theorem}[Optimal Convergence Rate of NAG]\label{th:MatNAG}
Let $\lambda_{\max}$ and $\lambda_{\min}$ correspond to the largest and smallest non-zero eigenvalues of $(\boldsymbol I-P_{\boldsymbol V_\star}^\perp\otimes P_{\boldsymbol U_\star}^\perp)\boldsymbol \Theta$, respectively.
The parameter pair $(\mu_t,\eta_t)$ represents the stepsize and momentum parameter,
\added{where $\mu_t$ satisfies $\|\mathcal I-\mu_t \mathcal A^*\mathcal A\|\leq 1$ and $\eta_t\in[0,1]$.}
Set $\boldsymbol H(\mu_t)=(\boldsymbol I-P_{\boldsymbol V_\star}^\perp\otimes P_{\boldsymbol U_\star}^\perp)(\boldsymbol I-\mu_t \boldsymbol \Theta)$.
Then the vectorization of the residuals $\boldsymbol e_{t}=\boldsymbol x_{t}-\boldsymbol x_{\star}$ corresponding to the sequence $\{\boldsymbol X_{t}\}$ generated by Algorithm~\ref{Alg-NAG} satisfies
\begin{eqnarray}\label{eq:iter-T}
\begin{aligned}
\begin{pmatrix}
\boldsymbol e_{t+1}\\
\boldsymbol e_{t}
\end{pmatrix}=\underbrace {\begin{pmatrix}
(1+\eta_t)\boldsymbol H(\mu_t) &-\eta_t\boldsymbol H(\mu_t)\\
\boldsymbol I&\boldsymbol 0
\end{pmatrix}}_{\boldsymbol T(\mu_t,\eta_t)}
\begin{pmatrix}
\boldsymbol e_{t}\\
\boldsymbol e_{t-1}
\end{pmatrix}
\end{aligned}
\end{eqnarray}
When condition (\ref{eq:Basin-of-Attraction}) holds, Algorithm~\ref{Alg-NAG} satisfies the following recursion
\begin{eqnarray*}
(\|\boldsymbol e_{t+1}\|_2^2+\|\boldsymbol e_{t}\|_2^2) \leq  \rho(\boldsymbol T(\mu_t,\eta_t))^2(\|\boldsymbol e_{t}\|_2^2+\|\boldsymbol e_{t-1}\|_2^2).
\end{eqnarray*}
When $(\mu_t,\eta_t)\equiv(\mu_\flat,\eta_\flat):=(\frac{4}{\lambda_{\min}+3\lambda_{\max}},\frac{1-\sqrt{\mu_\flat\lambda_{\min}}}{1+\sqrt{\mu_\flat\lambda_{\min}}})$,
Algorithm~\ref{Alg-NAG} achieves the optimal convergence rate, i.e.,
\begin{eqnarray}\label{eq:NAG-opt-convergence}
\rho_{\mathsf{opt}}=\min_{\mu,\eta}\rho(T(\mu,\eta))=1-\sqrt{\frac{4\lambda_{\min}}{\lambda_{\min}+3\lambda_{\max}}}.
\end{eqnarray}
\end{theorem}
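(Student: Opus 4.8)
The plan is to linearise Algorithm~\ref{Alg-NAG} around $\boldsymbol X_\star$ so that the whole statement reduces to a spectral analysis of the companion matrix $\boldsymbol T(\mu_t,\eta_t)$. First I would substitute the extrapolation into the projected gradient step. Setting $\boldsymbol e_t^{\boldsymbol Y}:=\mathrm{vec}(\boldsymbol Y_t-\boldsymbol X_\star)=(1+\eta_t)\boldsymbol e_t-\eta_t\boldsymbol e_{t-1}$, I apply Lemma~\ref{lem:TSVD-Perturbation} to $\mathcal P_r(\boldsymbol Y_t-\mu_t\nabla_{\mathcal R}f(\boldsymbol Y_t))$, expanding the truncated SVD about the rank-$r$ matrix $\boldsymbol X_\star$ exactly as in step $(a)$ of Appendix~\ref{app:Th:MatIHT}; the perturbation $\boldsymbol N:=\boldsymbol Y_t-\boldsymbol X_\star-\mu_t\nabla_{\mathcal R}f(\boldsymbol Y_t)$ has norm $\mathcal O(\|\boldsymbol e_t^{\boldsymbol Y}\|_2)$, so the expansion is legitimate inside the Basin of Attraction \eqref{eq:Basin-of-Attraction}. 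In vectorised form this gives $\boldsymbol e_{t+1}=\boldsymbol P\,\mathrm{vec}(\boldsymbol N)+\mathcal O(\|\boldsymbol e_t^{\boldsymbol Y}\|_2^2)$ with the tangent-space projector $\boldsymbol P:=\boldsymbol I-P_{\boldsymbol V_\star}^\perp\otimes P_{\boldsymbol U_\star}^\perp$. Using $\mathrm{vec}(\nabla_{\mathcal R}f(\boldsymbol Y_t))=\boldsymbol P\boldsymbol\Theta\,\boldsymbol e_t^{\boldsymbol Y}+\mathcal O(\|\boldsymbol e_t^{\boldsymbol Y}\|_2^2)$ (replacing the singular subspaces of $\boldsymbol Y_t$ by those of $\boldsymbol X_\star$ costs only second order, by the same subspace estimate as Lemma~\ref{lem:subspace project-error}) together with $\boldsymbol P^2=\boldsymbol P$, the first-order identity collapses to $\boldsymbol e_{t+1}=\boldsymbol P(\boldsymbol I-\mu_t\boldsymbol\Theta)\boldsymbol e_t^{\boldsymbol Y}=\boldsymbol H(\mu_t)\boldsymbol e_t^{\boldsymbol Y}$. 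Expanding $\boldsymbol e_t^{\boldsymbol Y}$ and stacking $(\boldsymbol e_{t+1},\boldsymbol e_t)$ over $(\boldsymbol e_t,\boldsymbol e_{t-1})$ yields precisely \eqref{eq:iter-T}.

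Next I would read off the convergence rate from $\boldsymbol T:=\boldsymbol T(\mu_t,\eta_t)$. Since $\boldsymbol w_{t+1}=\boldsymbol T\boldsymbol w_t$ with $\boldsymbol w_t=(\boldsymbol e_t;\boldsymbol e_{t-1})$, the local linear rate is the spectral radius $\rho(\boldsymbol T)$, and the stated energy inequality records that the asymptotic one-step contraction factor of $\|\boldsymbol w_t\|_2^2$ is $\rho(\boldsymbol T)^2$; rigorously this is Gelfand's formula, i.e.\ for every $\varepsilon>0$ there is an adapted norm in which $\boldsymbol T$ contracts by $\rho(\boldsymbol T)+\varepsilon$. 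To evaluate $\rho(\boldsymbol T)$ I diagonalise $\boldsymbol H(\mu_t)$: on the range of $\boldsymbol P$ its eigenvalues are $h=1-\mu_t\lambda$ with $\lambda$ running over the nonzero spectrum of $\boldsymbol P\boldsymbol\Theta$, hence $\lambda\in[\lambda_{\min},\lambda_{\max}]$. Each $h$ contributes a $2\times2$ companion block whose eigenvalues solve $z^2-(1+\eta_t)h\,z+\eta_t h=0$; when the discriminant $(1+\eta_t)^2h^2-4\eta_t h$ is negative the two roots are complex conjugate of modulus $\sqrt{\eta_t h}$, whereas for $h\le 0$ they are real of opposite sign. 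Therefore $\rho(\boldsymbol T)=\max_{\lambda}\rho_h$, a maximum over the one-parameter family indexed by $\lambda$.

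It remains to solve the minimax $\min_{\mu,\eta}\max_{\lambda\in[\lambda_{\min},\lambda_{\max}]}\rho_h$, following the robust-tuning template of \cite{kim2018adaptive}. The active constraints are the extreme modes $\lambda_{\min}$ (largest $h$) and $\lambda_{\max}$ (smallest $h$), and the optimum equalises them at the complex/real boundary. Writing $s:=\sqrt{\mu_\flat\lambda_{\min}}$ so that $\rho_{\mathsf{opt}}=1-s$, I verify directly that at $(\mu_\flat,\eta_\flat)$ the $\lambda_{\min}$-block is exactly critically damped: $h_{\max}=1-s^2$ equals $4\eta_\flat/(1+\eta_\flat)^2$, so it is a double root of modulus $\sqrt{\eta_\flat h_{\max}}=1-s$. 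For $\lambda_{\max}$ one has $h_{\min}=1-\mu_\flat\lambda_{\max}=-(1-s^2)/3<0$, real roots, and a short discriminant computation gives dominant modulus $1-s$ as well. For interior $\lambda$ the complex-regime modulus $\sqrt{\eta_\flat h}$ is increasing in $h>0$ and the real-regime modulus is monotone in $|h|$ for $h<0$, so both stay below $1-s$. Hence $\rho(\boldsymbol T(\mu_\flat,\eta_\flat))=1-s=1-\sqrt{4\lambda_{\min}/(\lambda_{\min}+3\lambda_{\max})}$, which is \eqref{eq:NAG-opt-convergence}.

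The main obstacle is the optimality (lower-bound) half: the forward computation only shows $(\mu_\flat,\eta_\flat)$ attains $1-s$, and I still need $\rho(\boldsymbol T(\mu,\eta))\ge 1-s$ for every admissible pair. I would argue this as a saddle-point property of the two extreme-mode moduli: lowering the $\lambda_{\min}$-mode value $\sqrt{\eta(1-\mu\lambda_{\min})}$ (by shrinking $\eta$ or enlarging $\mu$) either forces that mode out of the complex regime or raises the negative-$h$, real-root modulus of the $\lambda_{\max}$-mode, so the two cannot be pushed below $1-s$ simultaneously. Turning this competition into a rigorous proof that $1-s$ is the common minimax value—tracking the joint motion of $\rho_{h_{\max}}$ and $\rho_{h_{\min}}$ as $(\mu,\eta)$ vary—is the delicate step; the linearisation and the $2\times2$ root classification are then routine.
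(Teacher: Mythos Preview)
Your proposal is correct and follows essentially the same route as the paper's proof: linearise the NAG update about $\boldsymbol X_\star$ via Lemma~\ref{lem:TSVD-Perturbation}, vectorise to obtain $\boldsymbol e_{t+1}=\boldsymbol H(\mu_t)\boldsymbol e_t^{\boldsymbol Y}$, stack into the companion matrix $\boldsymbol T$, block-diagonalise into $2\times2$ blocks indexed by the eigenvalues $\lambda_j$, and analyse the quadratic characteristic polynomial $z^2-(1+\eta_t)(1-\mu_t\lambda_j)z+\eta_t(1-\mu_t\lambda_j)=0$ by the sign of its discriminant. The paper arrives at $(\mu_\flat,\eta_\flat)$ by intersecting the critical-damping curve for $\lambda_{\min}$ with the real-root branch for $\lambda_{\max}$ and solving, whereas you verify the given pair directly; these are equivalent, and your observation that the genuine difficulty is the minimax lower bound is accurate---the paper handles it by the same monotonicity/quasi-convexity heuristics you sketch rather than a fully detailed case analysis.
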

See Appendix~\ref{app:th:MatNAG} for proof.
The optimal convergence rate matches the lower bounds for first-order optimization algorithms (up to constant), which is consistent with NAG for the quadratic problems~\cite{kim2018adaptive}.

As we all know, the NAG is not a strict descent algorithm, and the momentum parameter can affect its acceleration performance.
The acceleration mechanism can be cast as a linear dynamical system~\cite{o2015adaptive}.
We also calculate the optimal momentum parameter $\eta^-(\mu_t)$ w.r.t. $\mu_t$ in Appendix~\ref{app:th:MatNAG} and partition according to the behavior of iterative oscillations.
\begin{itemize}
\item $\eta_t< \eta^-(\mu_t)$: low momentum region, overdamped,
\item $\eta_t= \eta^-(\mu_t)$: optimal momentum, critically damped,
\item $\eta_t> \eta^-(\mu_t)$: high momentum region, underdamped.
\end{itemize}
Since the optimal momentum is usually unknown, the parameter monotonically increases from $0$ to $1$, i.e., $\eta_t:0\nearrow 1$.
It inevitably leads to performance degradation caused by the underdamped iteration.
To avoid the high momentum, the adaptive restart scheme~\cite{o2015adaptive} properly resets the parameter when the underdamped occurs, which we will discuss in Sect.~\ref{sec:adaptive-restart}.
Another effective Lazy strategy~\cite{liang2022improving} sets $\eta_t=\frac{t-1}{t+d}$ and demonstrates that the larger the parameter $d$, the better the algorithm performance of NAG.
Since the exact line search outperforms the optimal constant stepsize, the following corollary gives an upper bound of the convergence rate of NAG under the Lazy strategy.
\begin{corollary}[Convergence Rate of NAG]\label{col:convergence-NAG}
When the momentum parameter $\eta_t=\frac{t-1}{t+d}\geq\eta_{\flat}$, the iteration error generated by Algorithm~\ref{Alg-NAG} satisfies
\begin{eqnarray*}
\|\boldsymbol e_{t+1}\|_2+\|\boldsymbol e_{t}\|_2\leq\sqrt{\eta_t(1-\mu_{\flat}\lambda_{\min})}(\|\boldsymbol e_{t}\|_2+\|\boldsymbol e_{t-1}\|_2).
\end{eqnarray*}
\end{corollary}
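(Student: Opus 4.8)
The plan is to exploit the transfer-matrix recursion (\ref{eq:iter-T}) of Theorem~\ref{th:MatNAG} and reduce it to decoupled scalar second-order recursions. By Lemma~\ref{lem:TSVD-Perturbation} the vectorized residual $\boldsymbol e_t$ lies, to first order, in the range of the orthogonal projection $\boldsymbol Q:=\boldsymbol I-P_{\boldsymbol V_\star}^\perp\otimes P_{\boldsymbol U_\star}^\perp$, and on this subspace $\boldsymbol H(\mu)=\boldsymbol Q(\boldsymbol I-\mu\boldsymbol\Theta)$ is self-adjoint because $\boldsymbol Q$ restricts to the identity there while $\boldsymbol I-\mu\boldsymbol\Theta$ is symmetric. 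Hence $\boldsymbol H(\mu_\flat)$ has an orthonormal eigenbasis with real eigenvalues $h_i=1-\mu_\flat\lambda_i$, where the $\lambda_i$ are the nonzero eigenvalues of $(\boldsymbol I-P_{\boldsymbol V_\star}^\perp\otimes P_{\boldsymbol U_\star}^\perp)\boldsymbol\Theta$. Projecting (\ref{eq:iter-T}) onto each eigenvector splits $\boldsymbol T(\mu_\flat,\eta_t)$ into $2\times 2$ blocks whose eigenvalues $z$ solve $z^{2}-(1+\eta_t)h_i\,z+\eta_t h_i=0$.

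Second, I would evaluate the spectral radius in the high-momentum region $\eta_t\ge\eta_\flat$. The algebraic identity $(1+\eta_\flat)^2(1-\mu_\flat\lambda_{\min})=4\eta_\flat$, which is exactly the definition of $\eta_\flat$, shows that at $\eta_t=\eta_\flat$ the $\lambda_{\min}$-block is critically damped, and for $\eta_t\ge\eta_\flat$ its discriminant $(1+\eta_t)^2h_i^2-4\eta_t h_i$ turns negative, so the block has complex-conjugate eigenvalues of modulus $\sqrt{\eta_t h_i}=\sqrt{\eta_t(1-\mu_\flat\lambda_{\min})}$. Since $h_i=1-\mu_\flat\lambda_i$ is largest at $\lambda_i=\lambda_{\min}$ and $\sqrt{\eta_t h_i}$ is increasing in $h_i$, this is the dominant modulus among the underdamped modes; it remains to verify that the blocks with $h_i\le 0$ (those coming from $\lambda_i$ near $\lambda_{\max}$) produce real eigenvalues of modulus at most $\sqrt{\eta_t(1-\mu_\flat\lambda_{\min})}$, which uses the admissibility $\|\mathcal I-\mu_\flat\mathcal A^*\mathcal A\|\le 1$ together with $\eta_t\le 1$. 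This gives $\rho(\boldsymbol T(\mu_\flat,\eta_t))=\sqrt{\eta_t(1-\mu_\flat\lambda_{\min})}$, which collapses to $\rho_{\mathsf{opt}}$ when $\eta_t=\eta_\flat$.

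Third, I would transfer this constant-stepsize rate to the exact-line-search iterates of Algorithm~\ref{Alg-NAG}. Keeping the common momentum $\eta_t$ fixed, I invoke the principle established through Remark~\ref{remark-Convergence-Quadratic} and Proposition~\ref{prop:NIHT-Convergence}, namely that on the locally quadratic landscape the contraction induced by the exact-line-search stepsize never exceeds that of the optimal constant stepsize $\mu_\flat$; consequently $\rho(\boldsymbol T(\mu_t,\eta_t))\le\rho(\boldsymbol T(\mu_\flat,\eta_t))=\sqrt{\eta_t(1-\mu_\flat\lambda_{\min})}$. Substituting this into the recursion of Theorem~\ref{th:MatNAG} and passing from the Euclidean sum $\|\boldsymbol e_{t+1}\|_2^2+\|\boldsymbol e_t\|_2^2$ to the sum of norms along the dominant block yields $\|\boldsymbol e_{t+1}\|_2+\|\boldsymbol e_t\|_2\le\sqrt{\eta_t(1-\mu_\flat\lambda_{\min})}\,(\|\boldsymbol e_t\|_2+\|\boldsymbol e_{t-1}\|_2)$.

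I expect the main obstacle to be the spectral-radius computation rather than the bookkeeping: certifying that the $\lambda_{\min}$ mode strictly dominates the whole spectrum requires a careful case analysis of the sign of $h_i$ and of the discriminant across all eigenvalues, and the $2\times 2$ blocks are non-normal, so $\rho$ is only a valid one-step contraction factor after passing to an adapted norm. This is the same slight looseness already present in Theorem~\ref{th:MatNAG}, and it is why the conclusion is phrased as an upper bound. A second delicate point is making the ``exact line search dominates the optimal constant stepsize'' comparison precise for the accelerated, non-monotone iteration, since the earlier argument was developed for plain gradient descent; I would justify it through the asymptotic stepsize behaviour of Lemma~\ref{lem:Asymptotic-Convergence-Quadratic} applied to the extrapolated sequence.
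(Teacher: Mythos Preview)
Your approach is essentially the one the paper intends: the corollary is not given a separate proof in the paper but is stated as an immediate consequence of the spectral-radius analysis in the proof of Theorem~\ref{th:MatNAG} (Appendix~\ref{app:th:MatNAG}), where it is shown that for $\eta_t\in(\eta_t^-,\eta_t^+)$ the roots of the characteristic equation are complex conjugate with $\rho(\boldsymbol T)=\max_{\lambda_j}\sqrt{\eta_t(1-\mu_t\lambda_j)}=\sqrt{\eta_t(1-\mu_t\lambda_{\min})}$, together with the one-line remark preceding the corollary that ``the exact line search outperforms the optimal constant stepsize.'' You have correctly unpacked these two ingredients, and the caveats you raise (non-normality of the $2\times2$ blocks, and the need to make the line-search comparison precise in the accelerated setting) are genuine soft spots that the paper itself leaves at the heuristic level.
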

The convergence rate of NAG gradually becomes slower under the Lazy strategy, so it does not belong to linear convergence.
Once we have the minimum number of iterations $t_0$ that satisfies condition (\ref{eq:Basin-of-Attraction}) and the total number of iterations $t_n$, we can roughly estimate the average convergence rate.
\begin{eqnarray}\label{eq:NAG-average-speed}
\begin{aligned}
\bar\rho_{\text{NAG}}&=(\prod_{t=t_0}^{t_n}\eta_t)^{1/(2(t_n-t_0+1))}\sqrt{1-\mu_{\flat}\lambda_{\min}}\\
&=(\prod_{i=0}^{d}\frac{t_0+i-1}{t_n+i-1})^{1/(2(t_n-t_0+1))}\sqrt{1-\mu_{\flat}\lambda_{\min}}
\end{aligned}
\end{eqnarray}

Note that Algorithm~\ref{Alg-NAG} needs two SVDs in the update step $\mu_t$ and truncated SVD.
By changing the order, the following algorithm only needs one SVD, and the convergence is the same as Algorithm~\ref{Alg-NAG}.
\begin{eqnarray}\label{eq:NAG-one-SVD}
\begin{aligned}
\boldsymbol Y_{t}&=\mathcal P_r(\boldsymbol Z_{t}),\\
\boldsymbol X_{t+1}&=\boldsymbol Y_{t}-\mu_t\nabla_{\mathcal R} f(\boldsymbol Y_t),\\
\boldsymbol Z_{t+1}&=\boldsymbol X_{t+1}+\eta_t(\boldsymbol X_{t+1}-\boldsymbol X_{t}).
\end{aligned}
\end{eqnarray}
However, those Euclidean methods, such as Algorithm~\ref{Alg-Grad} and Algorithm~\ref{Alg-NAG}, still suffer from the high computational cost caused by SVD.
To overcome the burden, we use the Riemannian gradient descent algorithm to solve the problem \replaced{(\ref{eq:lowrank})}{~\ref{eq:lowrank}}.

\section{Extension to Riemannian Optimization}
\label{sec:convergence-Riemannian-optimization}

In this section, we use the tools of the Riemannian manifold, such as subspace projection and retraction, to reduce the computational cost of algorithms in Sect.~\ref{sec:convergence-IHT-constant} and Sect.~\ref{sec:convergence-NAG}.
Further, we combine Nesterov's ideas and low-rank manifold tools to design algorithms with advantages in both time and space.

\subsection{Preliminaries on the geometry of low-rank matrix manifold}

Assume SVD of $\boldsymbol X\in\mathbb M_r\subset\mathbb R^{n_1\times n_2}$ is $\boldsymbol X=\boldsymbol U_{\boldsymbol X}\boldsymbol \Sigma_{\boldsymbol X} \boldsymbol V_{\boldsymbol X}^\top$.
The tangent space $\mathbb T_{\boldsymbol X}\mathbb M_r$ can be constructed by the direct sum of the row and column subspaces of $\boldsymbol X$.
\begin{eqnarray}\label{eq:tangent-space}
\begin{aligned}
\mathbb T_{X}\mathbb M_r=\{&\boldsymbol U_{\boldsymbol X}\boldsymbol M\boldsymbol V_{\boldsymbol X}^\top+\boldsymbol U_p \boldsymbol V_{\boldsymbol X}^\top+\boldsymbol U_{\boldsymbol X}\boldsymbol V_p^\top: \boldsymbol U_p^\top \boldsymbol U_{\boldsymbol X}=\boldsymbol V_p^\top \boldsymbol V_{\boldsymbol X}=\boldsymbol 0_{r\times r}\}.\\
\end{aligned}
\end{eqnarray}
where $\boldsymbol M\in\mathbb R^{r\times r},\boldsymbol U_p\in\mathbb R^{n_1\times r},\boldsymbol V_p\in\mathbb R^{n_2\times r}$.
The projection of any point $\boldsymbol Z\in\mathbb R^{n_1\times n_2}$ to $\mathbb T_{X}\mathbb M_r$ is
\begin{eqnarray}\label{eq:tangent-space-projection}
\mathcal P_{\mathbb T_{\boldsymbol X}\mathbb M_r}(\boldsymbol Z)=P_{\boldsymbol U_{\boldsymbol X}} \boldsymbol Z+\boldsymbol ZP_{\boldsymbol V_{\boldsymbol X}} -P_{\boldsymbol U_{\boldsymbol X}}\boldsymbol ZP_{\boldsymbol V_{\boldsymbol X}}.
\end{eqnarray}

\textbf{Optimization on the manifold:}
For a given differentiable function $f(\boldsymbol X)$, the general step for solving the optimization problem $\min_{\boldsymbol X\in\mathbb M_r} f(\boldsymbol X)$ on the manifold $\mathbb M_r$ is as follows
\begin{eqnarray}\label{eq:manifold-opt}
\boldsymbol X_{t+1}=\mathcal R_{\boldsymbol X_t}(-\eta_t \text{grad}f(\boldsymbol X_t)),
\end{eqnarray}
where $\text{grad}f(\boldsymbol X_t)=\mathcal P_{\mathbb T_{\boldsymbol X_t}\mathbb M_r}\nabla f(\boldsymbol X_t)$ represents the Riemannian gradient, which is obtained by projecting the Euclidean gradient to the tangent space.
A critical step is to pull the result from the tangent space back to the manifold through the retraction, denoted as $\mathcal R_{\boldsymbol X}(\cdot):\mathbb T_{\boldsymbol X}\mathbb M_r\to\mathbb M_r$.
We will briefly describe two common retractions: Projective retraction and Orthographic retraction.
In addition, we will introduce the inverse retraction, denoted $\mathsf{inv}\mathcal R_{\boldsymbol X}(\cdot):\mathbb M_r\to \mathbb T_{\boldsymbol X}\mathbb M_r$.
These concepts are visually described in Fig.~\ref{fig:retraction}.

\begin{figure}
\centering
\includegraphics[width=0.8\textwidth]{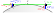}
\caption{Geometric comparison between projective retraction and orthographic retraction.}
\label{fig:retraction}
\end{figure}

\textbf{Projective retraction}:
For any tangent vector $\boldsymbol \delta\in \mathbb T_{X}\mathbb M_r$, the approximation problem corresponding to projective retraction can be solved by truncation SVD, i.e., $\mathcal R_{\boldsymbol X}^{\mathsf{proj}}(\boldsymbol \delta)=\mathcal P_r(\boldsymbol X+\boldsymbol \delta)$.
The computational cost on truncation SVD is $\mathcal O(n^3)$, where $n=\min{(n_1,n_2)}$.
Considering the representation of the tangent vector in (\ref{eq:tangent-space}), we can rewrite the matrix $\boldsymbol X+\boldsymbol \delta$ as a block matrix
\begin{eqnarray}\label{eq:block-matrix}
\boldsymbol X+\boldsymbol \delta=\begin{bmatrix}
\boldsymbol U_{\boldsymbol X} & \boldsymbol U_p
\end{bmatrix}
\begin{bmatrix}
\boldsymbol \Sigma_{\boldsymbol X} + \boldsymbol M& \boldsymbol I_r\\
\boldsymbol I_r & \boldsymbol {0}
\end{bmatrix}
\begin{bmatrix}
\boldsymbol V_{\boldsymbol X}&\boldsymbol V_p
\end{bmatrix}^\top.
\end{eqnarray}
Obviously, $\text{rank}(\boldsymbol X+\boldsymbol \delta)\leq 2r$.
The original SVD of (\ref{eq:block-matrix}) can be equivalently converted into two QR factorizations, one $2r\times 2r$ SVD and a few matrix multiplications~\cite{vandereycken2013low,boumal2022intromanifolds,wang2021fast}, with a total computational cost of $\mathcal O(n^2r + n^2+ nr^2+r^3)$, which greatly reduces the complexity when $r\ll n$.
The inverse projective retraction satisfies the following form.
\begin{eqnarray*}
\mathsf{inv}\mathcal R_{\boldsymbol X}^{\mathsf{proj}}(\boldsymbol X+\boldsymbol \delta)=(\boldsymbol X+\mathbb T_{\boldsymbol X}\mathbb M_r)\cap(\boldsymbol X+\boldsymbol \delta+\mathbb T_{\boldsymbol X+\boldsymbol \delta}^\perp\mathbb M_r)-\boldsymbol X.
\end{eqnarray*}
However, the inverse relies on tangent and normal spaces and does not have a closed-form representation.

\textbf{Orthographic retraction}:
For any tangent vector $\boldsymbol \delta\in \mathbb T_{\boldsymbol X}\mathbb M_r$, the orthographic retraction is defined as the closest point to $\boldsymbol X+\boldsymbol \delta$ in the vertical direction of the tangent space, i.e., $\boldsymbol X+\boldsymbol \delta+\mathbb T_{\boldsymbol X}^\perp \mathbb M_r\cap \mathbb M_r$, which has an explicit solution~\cite{zhang2018robust}
\begin{eqnarray}\label{eq:orth-retraction}
\mathcal R_{\boldsymbol X}^{\mathsf{orth}}(\boldsymbol \delta)=(\boldsymbol X+\boldsymbol \delta)\boldsymbol V_{\boldsymbol X}[\boldsymbol U_{\boldsymbol X}^\top (\boldsymbol X+\boldsymbol \delta)\boldsymbol V_{\boldsymbol X}]^{-1}\boldsymbol U_{\boldsymbol X}^\top (\boldsymbol X+\boldsymbol \delta).
\end{eqnarray}
The above formula only involves multiple matrix multiplications and a matrix inverse of $r\times r$, so it is efficient.
We are more concerned about constructing the tangent space of the next iteration from the current tangent space through SVD.
As suggested in~\cite{absil2015low}, the original SVD of (\ref{eq:orth-retraction}) can also be equivalently converted into two QR factorizations and a $r\times r$ SVD.
Specifically, we calculate $(\boldsymbol X+\boldsymbol \delta)\boldsymbol V_{\boldsymbol X} = \boldsymbol Q_1 \boldsymbol R_1, (\boldsymbol X+\boldsymbol \delta)^\top \boldsymbol U_{\boldsymbol X} = \boldsymbol Q_2 \boldsymbol R_2 $ and $\boldsymbol R_1[\boldsymbol U_{\boldsymbol X}^\top (\boldsymbol X+\boldsymbol \delta)\boldsymbol V_{\boldsymbol X}]^{-1}\boldsymbol R_2^\top={\boldsymbol U}_\flat\boldsymbol \Sigma_\flat{\boldsymbol V}_\flat^\top$.
Then orthographic retraction can be expressed as follows with SVD form
\begin{eqnarray}\label{eq:SVD-orth-retraction}
\mathcal R_{\boldsymbol X}^{\mathsf{orth}}(\boldsymbol \delta)=\underbrace{(\boldsymbol Q_1 \boldsymbol U_\flat)}_{\boldsymbol U_+}\underbrace{\boldsymbol \Sigma_\flat}_{\boldsymbol \Sigma_+} {\underbrace{(\boldsymbol Q_2 {\boldsymbol V}_\flat)}_{\boldsymbol V_+}}^\top.
\end{eqnarray}
The computational cost involved is consistent with the projective retraction.
Due to the orthogonal relationship to the tangent space, the inverse orthographic retraction is a simple projection to tangent space, i.e., $\mathsf{inv}\mathcal R_{\boldsymbol X}^{\mathsf{orth}}(\boldsymbol Y)=\mathcal P_{\mathbb T_{\boldsymbol X}\mathbb M_r}(\boldsymbol Y-\boldsymbol X)$.

\subsection{RGrad Algorithm with exact line search}
\label{subsec:convergence-RGrad}

Referring to~\cite{wei2016guarantees,wei2020guarantees,zhang2018robust}, we reformulate the RGrad under exact line search, see Algorithm~\ref{Alg-RGrad}.
As shown in Fig.~\ref{fig:RGrad}, the retraction $\mathcal R_{\boldsymbol X}$ can be choosen one of the projected $\mathcal R_{\boldsymbol X}^{\mathsf{proj}}$ and the orthogonal $\mathcal R_{\boldsymbol X}^{\mathsf{orth}}$.

	\begin{algorithm}[H]
		\caption{Riemannian gradient descent (RGrad)}
		\begin{algorithmic}\label{Alg-RGrad}
			\REQUIRE observation $\boldsymbol y_\mathsf{ob}$, rank $r$, maximum iteration $T$.
			\STATE Initialize: $\boldsymbol X_0=\mathcal A^*(\boldsymbol y_{\mathsf{ob}})$
			\FOR{$t = 0,1,...,T-1$}
			\STATE exact line search rule $\mu_t=\frac{\|\text{grad}~f(\boldsymbol X_t)\|_F^2}{\|\mathcal A(\text{grad}~f(\boldsymbol X_t))\|_2^2}$
			\STATE $\boldsymbol X_{t+1}=\mathcal R_{\boldsymbol X_t}(-\mu_t\text{grad}~f(\boldsymbol X_t))$
			\ENDFOR
			\ENSURE $\boldsymbol X_{T}$.
		\end{algorithmic}
	\end{algorithm}

Thanks to the efficient implementation of retraction, the computational complexity of RGard is lower than that of Grad.
In the following \replaced{lemma}{Lemma}, we introduce the perturbation analysis of retractions, which can reflect retractions and truncated SVD have the same first-order perturbation expansion.
\begin{lemma}[Perturbation analysis of retractions]\label{lem:retraction-Perturbation}
Let SVD of the matrix \(\boldsymbol X\in\mathbb M_r\) be \(\boldsymbol X=\boldsymbol U_{\boldsymbol X}\boldsymbol \Sigma_{\boldsymbol X} \boldsymbol V_{\boldsymbol X}^\top\).
Assuming that the perturbation matrix \(\boldsymbol N\) satisfies \(\|\boldsymbol N\|_F< \sigma_{\replaced{r}{\min}}(\boldsymbol X)/2\), the first-order perturbation expansion of retractions satisfies
\begin{eqnarray}\label{eq:retraction-expansion}
\begin{aligned}
\mathcal R_{\boldsymbol X}^{\mathsf{proj}}(\boldsymbol N)=&\mathcal P_{\mathbb T_{\boldsymbol X}}(\boldsymbol X+\boldsymbol N)+\mathcal O(\|\boldsymbol N\|_F^2),\\
\mathcal R_{\boldsymbol X}^{\mathsf{orth}}(\boldsymbol N)=&\mathcal P_{\mathbb T_{\boldsymbol X}}(\boldsymbol X+\boldsymbol N)+\mathcal O(\|\boldsymbol N\|_F^2).
\end{aligned}
\end{eqnarray}
\end{lemma}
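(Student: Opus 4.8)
The plan is to treat the two retractions separately, reducing both to the first-order truncated-SVD expansion of Lemma~\ref{lem:TSVD-Perturbation} together with the explicit tangent-space parametrization (\ref{eq:tangent-space}). Throughout I abbreviate $\boldsymbol U = \boldsymbol U_{\boldsymbol X}$, $\boldsymbol V = \boldsymbol V_{\boldsymbol X}$, $\boldsymbol \Sigma = \boldsymbol \Sigma_{\boldsymbol X}$, and I use that the perturbation $\boldsymbol N$ is a tangent vector, so by (\ref{eq:tangent-space}) it admits the decomposition $\boldsymbol N = \boldsymbol U \boldsymbol M \boldsymbol V^\top + \boldsymbol U_p \boldsymbol V^\top + \boldsymbol U \boldsymbol V_p^\top$ with $\boldsymbol U_p^\top \boldsymbol U = \boldsymbol 0$ and $\boldsymbol V_p^\top \boldsymbol V = \boldsymbol 0$. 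Two preliminary facts are used repeatedly. First, since $\boldsymbol N \in \mathbb T_{\boldsymbol X}\mathbb M_r$ its normal component vanishes, $P_{\boldsymbol U}^\perp \boldsymbol N P_{\boldsymbol V}^\perp = \boldsymbol 0$, so by (\ref{eq:tangent-space-projection}) one has $\mathcal P_{\mathbb T_{\boldsymbol X}}(\boldsymbol X + \boldsymbol N) = \boldsymbol X + \boldsymbol N$ \emph{exactly}. Second, the blocks $\boldsymbol U_p$, $\boldsymbol V_p$, $\boldsymbol M$ are each $\mathcal O(\|\boldsymbol N\|_F)$.

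For the projective retraction the claim is then immediate. Applying Lemma~\ref{lem:TSVD-Perturbation} to $\boldsymbol X + \boldsymbol N$ gives $\mathcal R_{\boldsymbol X}^{\mathsf{proj}}(\boldsymbol N) = \mathcal P_r(\boldsymbol X + \boldsymbol N) = \boldsymbol X + \boldsymbol N - P_{\boldsymbol U}^\perp \boldsymbol N P_{\boldsymbol V}^\perp + \mathcal O(\|\boldsymbol N\|_F^2)$, and the middle term vanishes because $\boldsymbol N$ is tangent. Hence $\mathcal R_{\boldsymbol X}^{\mathsf{proj}}(\boldsymbol N) = \boldsymbol X + \boldsymbol N + \mathcal O(\|\boldsymbol N\|_F^2) = \mathcal P_{\mathbb T_{\boldsymbol X}}(\boldsymbol X + \boldsymbol N) + \mathcal O(\|\boldsymbol N\|_F^2)$. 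Here I only need to note that the hypothesis $\|\boldsymbol N\|_F < \sigma_r(\boldsymbol X)/2$ assumed in this lemma is exactly the hypothesis required by Lemma~\ref{lem:TSVD-Perturbation}.

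For the orthographic retraction I would substitute the parametrization of $\boldsymbol N$ into the closed form (\ref{eq:orth-retraction}). Writing $\boldsymbol Y = \boldsymbol X + \boldsymbol N$ and $\boldsymbol W = \boldsymbol U^\top \boldsymbol Y \boldsymbol V$, the tangent decomposition yields the clean identities $\boldsymbol W = \boldsymbol \Sigma + \boldsymbol M$, $\boldsymbol Y \boldsymbol V = \boldsymbol U \boldsymbol W + \boldsymbol U_p$, and $\boldsymbol U^\top \boldsymbol Y = \boldsymbol W \boldsymbol V^\top + \boldsymbol V_p^\top$. Substituting these into $\mathcal R_{\boldsymbol X}^{\mathsf{orth}}(\boldsymbol N) = (\boldsymbol Y \boldsymbol V)\boldsymbol W^{-1}(\boldsymbol U^\top \boldsymbol Y)$ and multiplying out, the factors $\boldsymbol W$ and $\boldsymbol W^{-1}$ cancel in three of the four resulting terms, leaving exactly $\mathcal R_{\boldsymbol X}^{\mathsf{orth}}(\boldsymbol N) = \boldsymbol U \boldsymbol W \boldsymbol V^\top + \boldsymbol U_p \boldsymbol V^\top + \boldsymbol U \boldsymbol V_p^\top + \boldsymbol U_p \boldsymbol W^{-1}\boldsymbol V_p^\top$. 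Since $\boldsymbol U \boldsymbol W \boldsymbol V^\top = \boldsymbol X + \boldsymbol U \boldsymbol M \boldsymbol V^\top$, the first three terms reassemble to $\boldsymbol X + \boldsymbol N$, and the entire remainder is the single term $\boldsymbol U_p \boldsymbol W^{-1}\boldsymbol V_p^\top$.

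It then remains to control that remainder and to verify that $\boldsymbol W$ is invertible. Invertibility follows from Weyl's inequality: since $\|\boldsymbol M\| \le \|\boldsymbol N\|_F < \sigma_r(\boldsymbol X)/2$, the smallest singular value of $\boldsymbol W = \boldsymbol \Sigma + \boldsymbol M$ is at least $\sigma_r(\boldsymbol X)/2 > 0$, so $\|\boldsymbol W^{-1}\| \le 2/\sigma_r(\boldsymbol X)$. Consequently $\|\boldsymbol U_p \boldsymbol W^{-1}\boldsymbol V_p^\top\|_F \le \|\boldsymbol U_p\|_F \|\boldsymbol W^{-1}\| \|\boldsymbol V_p\|_F = \mathcal O(\|\boldsymbol N\|_F^2)$, because $\boldsymbol U_p$ and $\boldsymbol V_p$ are each $\mathcal O(\|\boldsymbol N\|_F)$. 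Combining, $\mathcal R_{\boldsymbol X}^{\mathsf{orth}}(\boldsymbol N) = \boldsymbol X + \boldsymbol N + \mathcal O(\|\boldsymbol N\|_F^2) = \mathcal P_{\mathbb T_{\boldsymbol X}}(\boldsymbol X + \boldsymbol N) + \mathcal O(\|\boldsymbol N\|_F^2)$, finishing the proof. The only genuinely computational step is the cancellation in the orthographic expansion, and the main point to keep careful is that this cancellation is an exact algebraic identity requiring no Taylor expansion of $\boldsymbol W^{-1}$, so the second-order remainder is one explicit term with a controlled Frobenius bound rather than an uncontrolled tail.
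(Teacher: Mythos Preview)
Your proof is correct, and for the projective retraction it matches the paper exactly (both just invoke Lemma~\ref{lem:TSVD-Perturbation}). For the orthographic retraction, however, you take a genuinely different route. The paper does \emph{not} use the tangent parametrization of $\boldsymbol N$; instead it treats $\boldsymbol N$ as an arbitrary perturbation, expands $(\boldsymbol \Sigma + \boldsymbol U^\top \boldsymbol N \boldsymbol V)^{-1}$ via a Neumann series $\boldsymbol \Sigma^{-1} - \boldsymbol \Sigma^{-1}\boldsymbol U^\top \boldsymbol N \boldsymbol V \boldsymbol \Sigma^{-1} + \mathcal O(\|\boldsymbol N\|_F^2)$, multiplies everything out, and then recognizes the first-order part as $P_{\boldsymbol U}\boldsymbol N + \boldsymbol N P_{\boldsymbol V} - P_{\boldsymbol U}\boldsymbol N P_{\boldsymbol V}$ using the identities $\boldsymbol X\boldsymbol X^{-\top} = P_{\boldsymbol U}$ and $\boldsymbol X^{-\top}\boldsymbol X = P_{\boldsymbol V}$. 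Your approach is cleaner in this context: by exploiting $\boldsymbol N \in \mathbb T_{\boldsymbol X}\mathbb M_r$ you obtain an \emph{exact} identity $\mathcal R_{\boldsymbol X}^{\mathsf{orth}}(\boldsymbol N) = \boldsymbol X + \boldsymbol N + \boldsymbol U_p \boldsymbol W^{-1}\boldsymbol V_p^\top$ with no series truncation, and the remainder is a single explicit term whose bound is transparent. The paper's argument buys slightly more generality (it yields $\mathcal P_{\mathbb T_{\boldsymbol X}}(\boldsymbol X+\boldsymbol N)$ even for non-tangent $\boldsymbol N$), but since retractions act on tangent vectors that generality is never used downstream.
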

See Appendix~\ref{app:lem:retraction-Perturbation} for proof.
The iterative matrix derived by RGrad is consistent with Grad in Appendix~\ref{app:col:MatRGrad}, which means the same convergence rate.

\subsection{Nesterov's Accelerated Riemannian Gradient}
\label{subsec:convergence-NARG}

From a manifold point of view, the extrapolation along the geodesic involves the exponential and the logarithmic maps~\cite{zhang2018towards,kim2022nesterov}.
Fortunately, these maps on the low-rank matrix manifold can often be replaced by first-order approximation, the retraction~\cite{vandereycken2013low,duruisseaux2022variational}.
Compared with projective retraction, orthographic retraction has an explicit inverse, i.e., projection $\mathcal P_{\mathbb T_{\boldsymbol X}\mathbb M_r}$.
Next, we establish the NARG Algorithm based on orthographic retraction.

	\begin{algorithm}[H]
		\caption{Nesterov's Accelerated Riemannian Gradient (NARG)}
		\begin{algorithmic}\label{Alg-NARG}
			\REQUIRE observation $\boldsymbol y_\mathsf{ob}$, rank $r$, maximum iteration $T$.
			\STATE Initialize: $\boldsymbol X_{-1}=\boldsymbol X_{0}=\mathcal A^*(\boldsymbol y_{\mathsf{ob}})$.
			\FOR{$t = 0,1,...,T-1$}
			\STATE choose proper $\eta_t$ and set extrapolation: $\boldsymbol Y_t=\mathcal R^{\mathsf {orth}}_{\boldsymbol X_t}(-\eta_t \mathsf{inv} \mathcal R^{\mathsf {orth}}_{\boldsymbol X_t}(\boldsymbol X_{t-1}))$,
			\STATE exact line search rule $\mu_t=\frac{\|\text{grad}~f(\boldsymbol Y_t)\|_F^2}{\|\mathcal A(\text{grad}~f(\boldsymbol Y_t))\|_2^2}$,
			\STATE $\boldsymbol X_{t+1}=\mathcal R^{\mathsf {orth}}_{\boldsymbol Y_t}(-\mu_t\text{grad}~f(\boldsymbol Y_t))$,
			\ENDFOR
			\ENSURE $\boldsymbol X_{T}$.
		\end{algorithmic}
	\end{algorithm}

The extrapolation sequence $\{\boldsymbol Y_t\}$ is strictly restricted to geodesics.
Through the inverse orthographic retraction, the following relationship reflects the linear extrapolation on the tangent space (\ref{eq:Euclidean-Nesterov}), i.e.,
\begin{eqnarray*}
\mathsf{inv} \mathcal R^{\mathsf {orth}}_{\boldsymbol X_t}(\boldsymbol Y_{t})=\boldsymbol X_{t}+\eta_t(\boldsymbol X_{t}-\mathsf{inv} \mathcal R^{\mathsf {orth}}_{\boldsymbol X_t}(\boldsymbol X_{t-1})),
\end{eqnarray*}
A visualization of the extrapolation process is presented in Fig.~\ref{fig:NARG}.
Compared to the non-accelerated Algorithm~\ref{Alg-RGrad}, Algorithm~\ref{Alg-NARG} seems to increase the computational cost caused by the extra operators, but the convergence rate is greatly improved.
Because the whole process alternates only between the manifold and tangent space, we can use (\ref{eq:SVD-orth-retraction}) to achieve a fast transfer of tangent space between $\{\boldsymbol X_t\}$ and $\{\boldsymbol Y_t\}$.
Regarding convergence, the iterative matrix of NAG is entirely consistent with NARG, and its derivation is shown in Appendix~\ref{app:col:MatNARG}.
Combined with Theorem~\ref{th:MatNAG}, NAG yields the same local linear convergence rate on Euclidean and manifold.
Essentially, this boils down to the same first-order expansion of iterations w.r.t. small perturbations.
\begin{remark}\label{remark-NARG}
Compared with the existing Euclidean acceleration, such as~\cite{vu2019accelerating,kyrillidis2014matrix}, we firstly establish\deleted{es} a bidirectional connection between tangent space and manifold by an orthographic retraction from the perspective of Riemannian geometry.
Following the framework~\cite{vu2021asymptotic}, a new perturbation analysis in Lemma~\ref{lem:retraction-Perturbation} can bridge the gap between Euclidean and manifold.
\end{remark}

\section{Adaptive Restart Scheme}
\label{sec:adaptive-restart}

In practice, optimal estimation of momentum parameters is often challenging.
The adaptive restart scheme~\cite{o2015adaptive} judges whether the momentum leads in the wrong direction through function or gradient conditions.
The convergence rate consistent with the optimal spectral radius is affirmed in applications such as strongly convex quadratic optimization~\cite{kim2018adaptive}, linear elliptic problem~\cite{park2021accelerated} and matrix completion~\cite{vu2019accelerating}.
We propose an adaptive restart scheme-based NARG algorithm with low complexity for low-rank matrix estimation; see Algorithm~\ref{Alg-NARG+R}.

	\begin{algorithm}[H]
		\caption{NARG with Adaptive Restart Scheme (NAGR+R)}
		\begin{algorithmic}\label{Alg-NARG+R}
			\REQUIRE observation $\boldsymbol y_\mathsf{ob}$, rank $r$, maximum iteration $T$.
			\STATE Initialize: $\boldsymbol X_{-1}=\boldsymbol X_{0}=\mathcal A^*(\boldsymbol y_{\mathsf{ob}})$.
			\FOR{$t = 0,1,...,T-1$}
			\IF{$\langle \nabla f(\boldsymbol Y_{t-1}), \boldsymbol X_t-\boldsymbol X_{t-1}\rangle >0$}
				\STATE$\tau=1$,
			\ELSE
				\STATE $\tau=\tau+1$,
			\ENDIF
			\STATE set extrapolation $\boldsymbol Y_t=\mathcal R^{\mathsf {orth}}_{\boldsymbol X_t}(-\eta_t \mathsf{inv} \mathcal R^{\mathsf {orth}}_{\boldsymbol X_t}(\boldsymbol X_{t-1}))$ with $\eta_t=\frac{\tau-1}{\tau+2}$,
			\STATE exact line search rule $\mu_t=\frac{\|\text{grad}~f(\boldsymbol Y_t)\|_F^2}{\|\mathcal A(\text{grad}~f(\boldsymbol Y_t))\|_2^2}$,
			\STATE $\boldsymbol X_{t+1}=\mathcal R^{\mathsf {orth}}_{\boldsymbol Y_t}(-\mu_t\text{grad}~f(\boldsymbol Y_t))$,
			\ENDFOR
			\ENSURE $\boldsymbol X_{T}$.
		\end{algorithmic}
	\end{algorithm}

Moreover, we establish the equivalence relation of gradient condition between Euclidean and manifold by the tangent space.
According to Lemma~\ref{lem:retraction-Perturbation}, we find that the sign of the gradient condition is consistent with its expansion in the tangent space, i.e.
\begin{eqnarray}\label{eq:restart-condition}
\begin{aligned}
&\text{sign}(\langle \text{grad}~f(\boldsymbol Y_{t-1}), \mathsf{inv} \mathcal R^{\mathsf {orth}}_{\boldsymbol Y_{t-1}}(\boldsymbol X_{t})-\mathsf{inv} \mathcal R^{\mathsf {orth}}_{\boldsymbol Y_{t-1}}(\boldsymbol X_{t-1})\rangle)\\
&=\text{sign}(\langle \nabla f(\boldsymbol Y_{t-1}), \boldsymbol X_t-\boldsymbol X_{t-1}\rangle).\\
\end{aligned}
\end{eqnarray}
See Appendix~\ref{app:col:Restart} for proof and Fig.~\ref{fig:Restart} for geometric interpretation.
Nevertheless, the projection introduces an additional computational cost, so we still use the traditional gradient condition in Algorithm~\ref{Alg-NARG+R}.
Besides, a practical trick is to perform a restart once condition (\ref{eq:Basin-of-Attraction}) holds, which can facilitate local approximation and iterative analysis of constraints.
\begin{figure}[!h]
\centering
\includegraphics[width=0.5\textwidth]{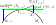}
\caption{Illustration of adaptive restart scheme from a manifold point of view.}\label{fig:Restart}
\end{figure}

The following corollary can be established with the optimal spectral radius in Theorem~\ref{th:MatNAG}.
\begin{corollary}[Adaptive Restart Scheme]\label{col:Restart}
Let $\lambda_{\max}$ and $\lambda_{\min}$ correspond to the largest and smallest non-zero eigenvalues of $(\boldsymbol I-P_{\boldsymbol V_\star}^\perp\otimes P_{\boldsymbol U_\star}^\perp)\boldsymbol \Theta$, respectively.
When condition (\ref{eq:Basin-of-Attraction}) holds, Algorithm~\ref{Alg-NARG+R} satisfies
\begin{eqnarray*}
\|\boldsymbol X_{t+1}-\boldsymbol X_\star\|_F \leq 1-\sqrt{\frac{4\lambda_{\min}}{\lambda_{\min}+3\lambda_{\max}}}\|\boldsymbol X_t-\boldsymbol X_\star\|_F.
\end{eqnarray*}
\end{corollary}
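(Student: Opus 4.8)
The plan is to transport the restart dynamics back to the vectorized Euclidean model of Theorem~\ref{th:MatNAG} and then argue that the gradient-based restart self-selects the optimal damping that realizes~(\ref{eq:NAG-opt-convergence}). First I would invoke the identity established in Appendix~\ref{app:col:MatNARG}, namely that the first-order error recursion for the Riemannian scheme coincides with the Euclidean one, so the residual $\boldsymbol e_t=\boldsymbol x_t-\boldsymbol x_\star$ produced by Algorithm~\ref{Alg-NARG+R} obeys~(\ref{eq:iter-T}) with $\boldsymbol H(\mu_t)=(\boldsymbol I-P_{\boldsymbol V_\star}^\perp\otimes P_{\boldsymbol U_\star}^\perp)(\boldsymbol I-\mu_t\boldsymbol\Theta)$. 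The sign equivalence~(\ref{eq:restart-condition}) then certifies that the Euclidean restart test $\langle\nabla f(\boldsymbol Y_{t-1}),\boldsymbol X_t-\boldsymbol X_{t-1}\rangle>0$ coded in Algorithm~\ref{Alg-NARG+R} is precisely the gradient-restart condition acting on this reduced system, so the whole argument may be carried out in the eigenbasis of the effective operator $M:=(\boldsymbol I-P_{\boldsymbol V_\star}^\perp\otimes P_{\boldsymbol U_\star}^\perp)\boldsymbol\Theta$, whose nonzero spectrum lies in $[\lambda_{\min},\lambda_{\max}]$.

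Next I would decouple the iteration modewise. In the eigenbasis of $M$ the matrix $\boldsymbol T(\mu_t,\eta_t)$ splits into $2\times2$ blocks, one per eigenvalue $\lambda$, with characteristic polynomial $z^2-(1+\eta_t)h\,z+\eta_t h$ where $h=1-\mu_t\lambda$; the discriminant $(1+\eta_t)^2h^2-4\eta_t h$ separates the overdamped, critically damped, and underdamped regimes listed after Theorem~\ref{th:MatNAG}. The role of the optimal parameters is then made explicit: at $(\mu_\flat,\eta_\flat)$ the slowest mode $\lambda=\lambda_{\min}$ is exactly critically damped, since with $s=\sqrt{\mu_\flat\lambda_{\min}}$ one has $(1+\eta_\flat)^2(1-\mu_\flat\lambda_{\min})=4\eta_\flat$ and the double root satisfies $|z|=\tfrac{2\eta_\flat}{1+\eta_\flat}=1-s=\rho_{\mathsf{opt}}$, while the extremal mode $\lambda=\lambda_{\max}$ (for which $h<0$) contributes a real root of the same magnitude. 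Thus $\rho_{\mathsf{opt}}$ is precisely the critically damped rate of the dominant mode $\lambda_{\min}$, and $\eta_\flat=\eta^-(\mu_\flat)$ is its critical momentum.

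I would then show that the adaptive rule self-tunes to this regime. Because the exact line search keeps $\mu_t$ inside $[\lambda_{\max}^{-1},\lambda_{\min}^{-1}]$ by Proposition~\ref{prop:NIHT-Convergence}, and this interval contains $\mu_\flat$, the stepsize never leaves the range where the modal analysis is valid. Starting from $\tau=1$ the momentum $\eta_t=(\tau-1)/(\tau+2)$ grows from $0$, so while $\eta_t<\eta^-(\mu_t)$ the dominant mode stays overdamped and the residual contracts monotonically; once $\eta_t$ crosses $\eta^-(\mu_t)$ this mode turns underdamped, its oscillatory component makes the inner product in the restart test change sign, the test fires by~(\ref{eq:restart-condition}), $\tau$ resets, and the effective momentum is pulled back to the critical value. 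Bounding the number of steps in each restart cycle by $O(\sqrt{\kappa})$ and showing that each cycle contracts $\|\boldsymbol e_t\|_F$ by the factor attained at critical damping then yields the claimed per-iteration bound $\|\boldsymbol X_{t+1}-\boldsymbol X_\star\|_F\le\big(1-\sqrt{4\lambda_{\min}/(\lambda_{\min}+3\lambda_{\max})}\big)\|\boldsymbol X_t-\boldsymbol X_\star\|_F$.

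The hard part will be making the third step quantitative rather than heuristic. One must prove that the gradient test fires close enough to the critical crossing that the effective momentum stays within $O(1/\sqrt{\kappa})$ of $\eta_\flat$, so the achieved contraction \emph{matches} $\rho_{\mathsf{opt}}$ and is not merely a constant multiple of it; this requires relating the first sign change of the dominant oscillatory block to its half-period and controlling the faster modes, which remain overdamped (or, for $\lambda$ near $\lambda_{\max}$, have real roots) and decay strictly faster, so that they neither mask nor prematurely trigger the restart. The additional subtlety absent from the fixed-parameter analysis of Theorem~\ref{th:MatNAG} is the coupling between the oscillating line-search stepsize $\mu_t$ and the restart-controlled $\eta_t$; I expect the uniform spectral control of $\mu_t$ near $\mu_\flat$, combined with this timing estimate, to be the technical core of the argument.
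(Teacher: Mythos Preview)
Your proposal is considerably more ambitious than what the paper actually does. The paper offers no detailed proof of this corollary at all: the only justification is the one-line remark preceding the statement, ``The following corollary can be established with the optimal spectral radius in Theorem~\ref{th:MatNAG}.'' In other words, the paper simply \emph{asserts} that the adaptive restart scheme realises the optimal rate $\rho_{\mathsf{opt}}$ computed in Theorem~\ref{th:MatNAG}, relying on the established heuristic understanding of gradient restart from \cite{o2015adaptive,kim2018adaptive} together with the Riemannian--Euclidean equivalence (Appendix~\ref{app:col:MatNARG} and the sign identity~(\ref{eq:restart-condition})). The numerical experiments in Sect.~\ref{sec:numerical-examples} are then used to \emph{confirm} the claim empirically rather than to support a rigorous argument.

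Your first two steps---reduction to the Euclidean recursion~(\ref{eq:iter-T}) and modal decoupling into $2\times2$ blocks---are exactly the ingredients the paper has in hand, and they are correct. Where you go further is step three, attempting to quantify when the restart fires and to bound the per-cycle contraction. You are right to flag this as the hard part: the coupling between the oscillating exact-line-search stepsize $\mu_t$ and the restart-controlled $\eta_t$, and the need to show the test fires within $O(1/\sqrt{\kappa})$ of $\eta_\flat$, are genuine obstacles that the adaptive-restart literature itself treats only heuristically (even for pure quadratics). The paper does not resolve these; it sidesteps them. So your proposal is not wrong, but you should be aware that you are trying to prove something the paper does not prove---and that a fully rigorous version of your step three would be a contribution in its own right, not a reconstruction of the authors' argument.
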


\section{Numerical Examples}
\label{sec:numerical-examples}

In this section, we provide numerical experiments to confirm our theoretical results, with the Matlab codes available at
\begin{center}
\url{https://github.com/pxxyyz/FastGradient}.
\end{center} 

\subsection{Convergence for Quadratic Problem}
We take the 2-D quadratic problem as an example.
Set the symmetric positive definite matrix $\boldsymbol Q=\begin{pmatrix} 10&1\\1&1\end{pmatrix}$ and the optimal point $\boldsymbol x_\star=(0,0)^\top$.
Let $\boldsymbol Q=\boldsymbol V\boldsymbol \Lambda \boldsymbol V^{-1}$ be the eigen decomposition.
Denote the rotation angle of the orthogonal basis as $\alpha=\min_{i=1,2}\arctan(\boldsymbol V_{i,2}/\boldsymbol V_{i,1})$, then $\boldsymbol x_0 = \boldsymbol Q^{-1}(\cos(\alpha+\theta), \sin(\alpha+\theta))^\top$ with $\theta\in[0,2\pi]$.
\begin{figure}[!h]
\centering
\includegraphics[width=0.8\textwidth]{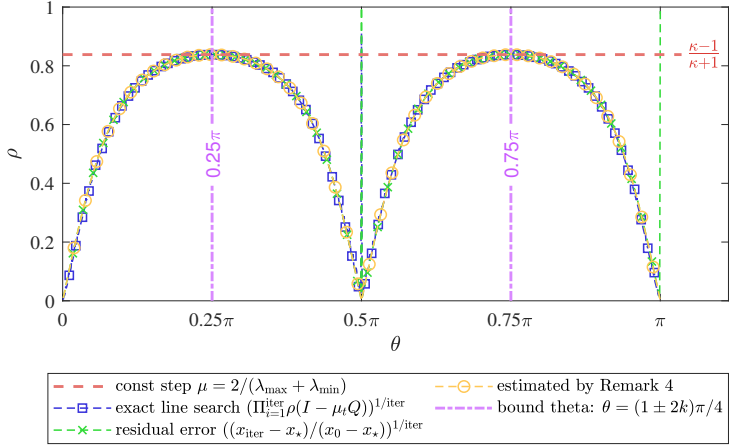}
\caption{Relationship between Convergence Rate and $\theta$ under Exact Line Search.}\label{fig:quadratic-theta}
\end{figure}

We know gradient $\nabla f(\boldsymbol x_0)=(\cos(\alpha+\theta), \sin(\alpha+\theta))^\top$ and stepsize $\tilde\mu=\langle \nabla f(\boldsymbol x_0), \boldsymbol Q\nabla f(\boldsymbol x_0)\rangle ^{-1}$.
The relationship (\ref{eq:quadratic-theta}) is verified in Fig.~\ref{fig:quadratic-theta}.
In particular, when $\theta = (1\pm 2k)\pi/4$, the convergence rate reaches the upper bound and is the same as the optimal constant stepsize, which verifies $\hat\mu=\check\mu=2/(\lambda_{\max}+\lambda_{\min})$ in Remark~\ref{remark-Convergence-Quadratic}.
When $\theta=k/2\pi$, $\boldsymbol x_0$ is located in the direction of the eigenvector, and the gradient points to $x_\star$.
The stepsize equals the corresponding eigenvalue, so it takes one step to reach $\boldsymbol x_\star$, consistent with~\cite{gonzaga2016steepest}.

\subsection{MC and MS}

Under different ranks and numbers of observations, we test the performance of algorithms and verify the validity of converged estimates.
Comparison methods include IHT, NIHT~\cite{tanner2013normalized}, Grad, RGrad, NAG, NARG, and NARG+R, where NIHT uses the first stepsize in Remark~\ref{remark-NIHT}, and the last uses the restart scheme.
The accelerated algorithms, including NAG and NARG, use the Lazy strategy with $d=2$.
Since there are two retractions, we use RGrad-Proj and RGrad-Orth to distinguish them.
The simulations of MC and MS are shown in Fig.~\ref{fig:MC-All} and Fig.~\ref{fig:MS-All}, respectively.
Here, the dashed line and the dash-dotted line compare the actual and estimated errors of the algorithm.
The brackets in the legend record the algorithm running time.
And $\rho$ is an estimate of the convergence rate, which reflects the slope of the linear decline.
It is worth mentioning that we use the optimal spectral radius of Theorem~\ref{th:MatNAG} as the optimal convergence rate of the accelerated algorithm.

\begin{figure*}[!t]
\centering
\includegraphics[width=1.0\textwidth]{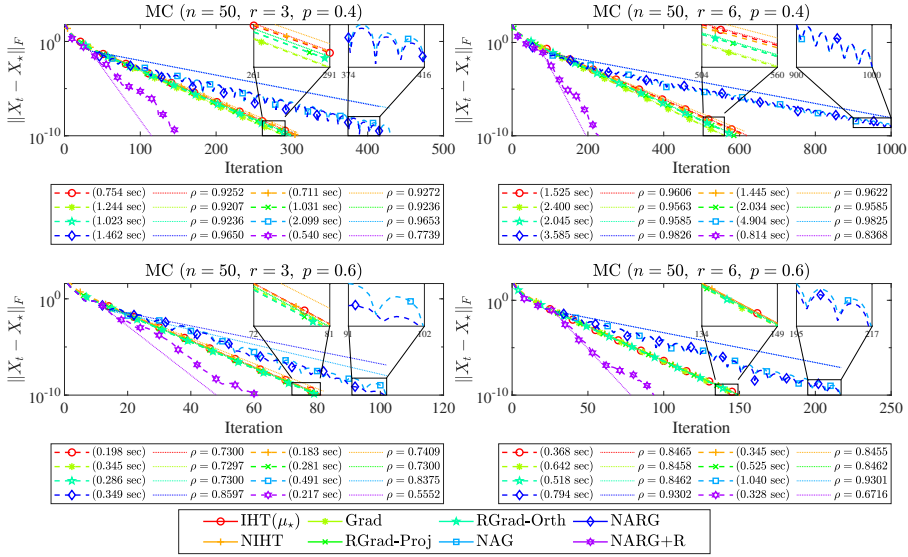}
\caption{(Log-scale) plot of error in MC.}\label{fig:MC-All}
\end{figure*}
\begin{figure*}[!t]
\centering
\includegraphics[width=1.0\textwidth]{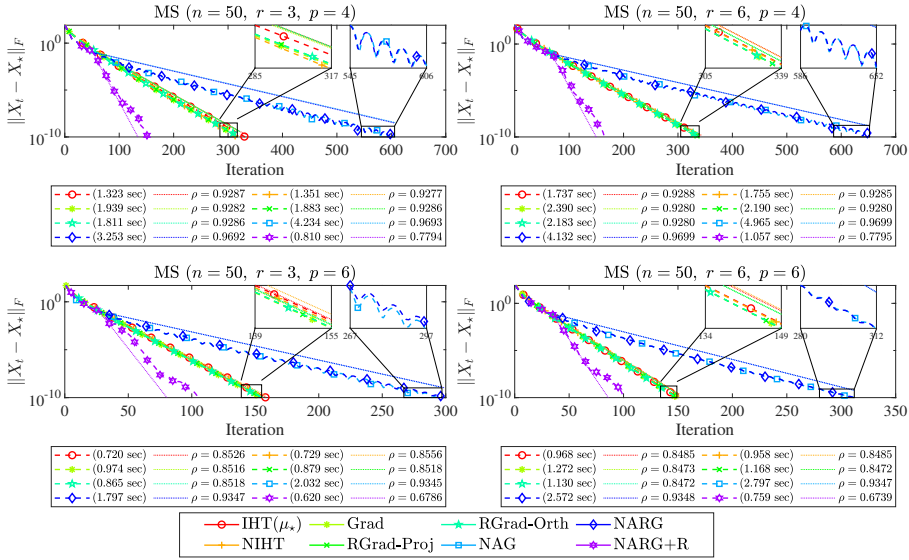}
\caption{(Log-scale) plot of error in MS.}\label{fig:MS-All}
\end{figure*}

From the comparison results, the estimation of the convergence rate of all algorithms is convincing.
The spectral radii of the first five methods are close, but the manifold-based methods dominate in terms of running time.
This also holds for comparing accelerated algorithms, namely NAG and NARG.
On the one hand, efficient retraction reduces the overall computational complexity and thus significantly reduces the running time under the premise of the same convergence rate.
On the other hand, the convergence rate is related to the iterative method but not the geometry, which also reflects Lemma~\ref{lem:retraction-Perturbation}.
Furthermore, for the accelerated methods, the high momentum under-damping causes the rippling behavior as the iterations increase.
The adaptive restart scheme can avoid ripples and matches the optimal convergence rate, confirming Corollary~\ref{col:Restart}.
Overall, the latest algorithm prevails on both sides.

\subsection{Oscillation caused by momentum}

To further illustrate the relationship between momentum and acceleration, we show the oscillatory effect under different parameters divided into two cases.
When the optimal parameter is known, as suggested in~\cite{o2015adaptive}, we set $\eta_t \equiv \frac{1-\sqrt{q}}{1+\sqrt{q}}$ and $q^\star=\kappa^{-1}$.
We observe the damping effect at different $q$ in Fig.~\ref{fig:Rippling-behavior}.
When $q<q^\star$, momentum higher than the optimal parameter will cause oscillations to slow the convergence rate.
Conversely, when $q>q^\star$, as $q$ increases, the gradually decreasing spectral radius makes the convergence slower.
Until $q=1$, i.e., $\eta_t\equiv0$, the momentum does not work, and the accelerated algorithm degenerates to the original algorithm.
The result of $q=q^\star$ is optimal and consistent with the trend of NARG+R.

\begin{figure}[!h]
\centering
\includegraphics[width=0.75\textwidth]{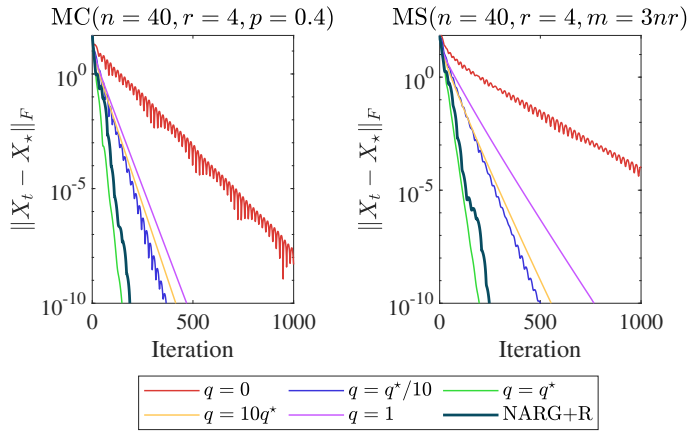}
\caption{Rippling behavior under different momentum parameter settings.}\label{fig:Rippling-behavior}
\end{figure}

When the optimal parameter is unknown, we set $\eta_t=\frac{t-1}{t+d}$ by referring to the discussion of heuristic momentum ~\cite{liang2017activity,liang2022improving}.
We compared different $d=2,5,10,20$ in Fig.~\ref{fig:NAG-RGard-Restart}.
As $d$ increases, the convergence is improved.
According to the average speed in (\ref{eq:NAG-average-speed}), when $d$ increases, the slower $\eta_{t}$ grows, and the longer it stays around the optimal parameter, the lower the convergence rate $\rho$ is.

\begin{figure}[!h]
\centering
\includegraphics[width=0.75\textwidth]{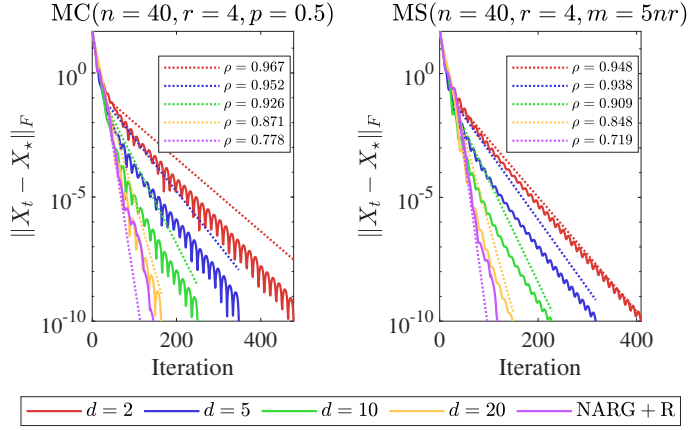}
\caption{Convergence comparison of lazy strategy with different $d$.}\label{fig:NAG-RGard-Restart}
\end{figure}

\subsection{Estimation of Spectral Radius}

Below we verify the spectral radius estimation of the iterative matrix in conclusion, which is the key to the convergence analysis for all algorithms.
We plot the spectral radius of MC and MS under different step sizes and different numbers of observations in Fig.~\ref{fig:rhoH}.
The coincidence of actual spectral radius (solid lines) and its estimation (dashed lines) verifies that the relationship (\ref{eq:IHT-rho}) holds.
There are some differences in stepsize in (\ref{eq:IHT-step}) between MC and MS.
As the number of observations increases, the optimal stepsize $\mu_{\dagger}$ and upper bound $\mu_{\ddagger}$ for MS increase, while the results for MS are reversed.
But, there are obvious upper bounds $\mu_{\ddagger}<1$ for MS and $\mu_{\ddagger}<2$ for MC.

\begin{figure}[!h]
\centering
\includegraphics[width=0.75\textwidth]{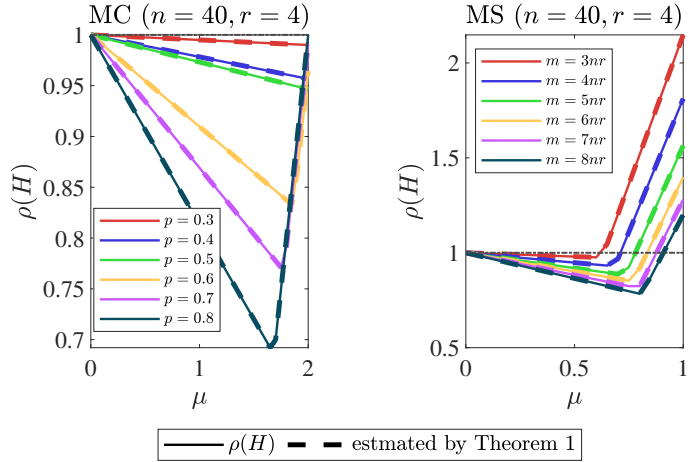}
\caption{Spectral radius estimation of Algorithm~\ref{Alg-IHT}.}\label{fig:rhoH}
\end{figure}

Fig.~\ref{fig:Surf-1} shows that there is a complex 3-D relationship between the spectral radius of the iterative matrix in (\ref{eq:iter-T}) and parameter pair $(\mu,\eta)$.
For a more intuitive presentation, we give its contour in Fig.~\ref{fig:Surf-2}, which verifies our solution to the equation (\ref{eq:quadratic-eqnarray}).
It can be seen that there is an apparent intersection (green dashed line) between the surfaces $\Pi_1$ and $\Pi_2$.
Moreover, the global minimum is on the junction of $\Delta\geq0$ (pink area) and the green dash.
We accurately label the estimates of the optimal parameters with a circle according to Theorem~\ref{th:MatNAG}.
The yellow line corresponding to $\mu>\mu_{\dagger}$ means that introducing momentum does not improve algorithm convergence.

\begin{figure}[!h]
\centering
\includegraphics[width=0.75\textwidth]{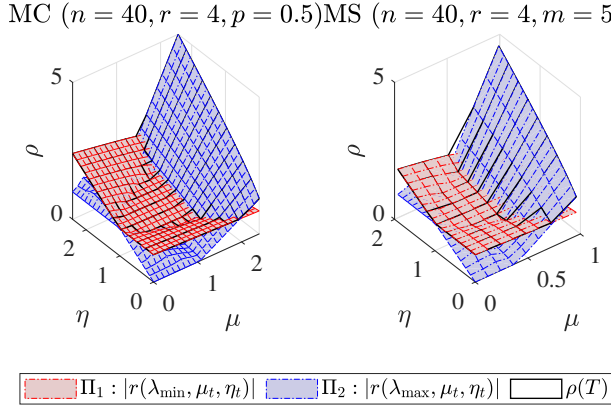}
\caption{3D surface relationship between spectral radius and $(\mu,\eta)$.}\label{fig:Surf-1}
\end{figure}
\begin{figure}[!h]
\centering
\includegraphics[width=0.75\textwidth]{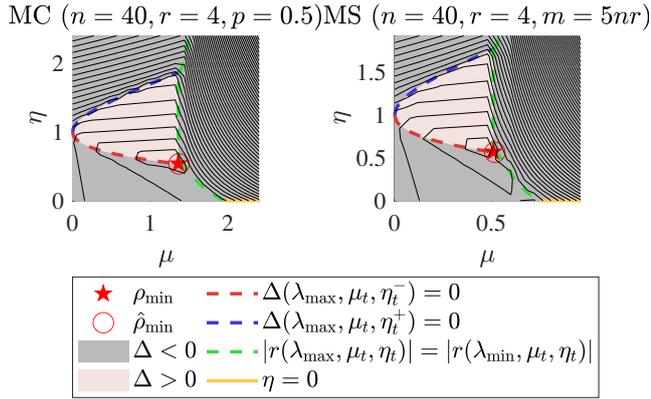}
\caption{2D Top view of spectral radius w.r.t. $(\mu,\eta)$.}\label{fig:Surf-2}
\end{figure}

In addition, we also plot the relationship between the spectral radius individually w.r.t. $\mu$ or $\eta$, respectively.
Fig.~\ref{fig:Surf-3} verifies the staged estimates of spectral radius in Appendix~\ref{app:th:MatNAG}.
The blue line of Fig.~\ref{fig:Surf-3} is equivalent to the yellow line of Fig.~\ref{fig:Surf-2} and the rising part of Fig.~\ref{fig:rhoH}.
The spectral radius of $\mu_{\dagger}$ and $\mu_{\flat}$ correspond to the optimal convergence rates of the accelerated and original algorithms.

\begin{figure}[!h]
\centering
\includegraphics[width=0.75\textwidth]{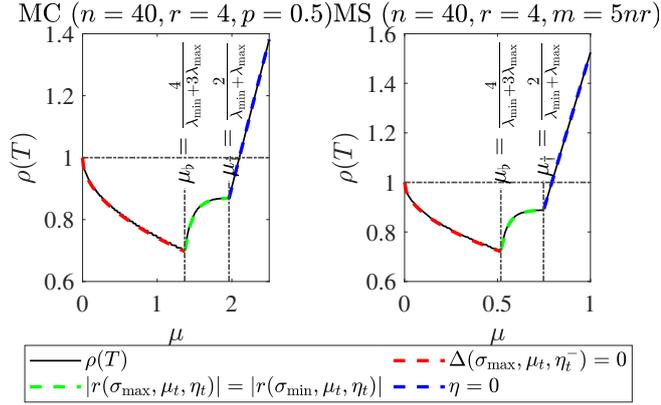}
\caption{Spectral radius estimation of Algorithm~\ref{Alg-NAG}.}\label{fig:Surf-3}
\end{figure}

\subsection{\added{Runtime comparison for larger simulations}}

Under the setting of different parameters $(n,r,p)$, the proposed NARG+R algorithm is compared with the state-of-the-art algorithms, including RGard and ScaledGD~\cite{tong2021accelerating}.
Table~\ref{table:large-scale} presents the average number of iterations and runtime over 20 random simulations with the stopping condition $\|\boldsymbol X_t-\boldsymbol X_\star\|_F\leq10^{-8}$.
By contrast, the matrix size does not affect the number of iterations of the three algorithms.
When $p$ is larger and $r$ is smaller, algorithms for MC usually converge faster.
Similarly, the larger $p$ is, the faster the algorithm for MS.
The impact of the parameter $(p,r)$ will be discussed later.
Overall, the proposed NARG+R has a significant advantage in the number of iterations, which shows that it is competitive in large-scale matrix applications.

\begin{table}[!h]
\caption{Average number of iterations and average runtime (seconds) over 20 random simulations.}
\label{table:large-scale}
\centering
\renewcommand\arraystretch{1.2}
\begin{tabular}{|c|cc|cc|cc|cc|}
\hline
\textbf{Algorithm} & \textbf{Iter} & \textbf{Time} & \textbf{Iter} & \textbf{Time} & \textbf{Iter} & \textbf{Time} & \textbf{Iter} & \textbf{Time} \\
\hline\hline
\multirow{2}{*}{} & \multicolumn{8}{c|}{MC, $n=2500$, sample size $n^2p$}\\
\cline{2-9}
&\multicolumn{2}{c|}{$r=0.05n,p=0.4$} & \multicolumn{2}{c|}{$r=0.05n,p=0.6$} &\multicolumn{2}{c|}{$r=0.1n,p=0.4$} & \multicolumn{2}{c|}{$r=0.1n,p=0.6$}\\
\hline
RGrad&
72.7&13.407&
38.9&7.756&
198&54.03&
69&19.848\\
Scaled GD&
145&25.508&
92&17.27&
360.7&93.906&
157&42.814\\
NARG+R &
56&11.11&
33.25&7.092&
123.5&35.56&
55&16.62\\
\hline\hline
\multirow{2}{*}{} & \multicolumn{8}{c|}{MC, $n=5000$, sample size $n^2p$}\\
\cline{2-9}
&\multicolumn{2}{c|}{$r=0.05n,p=0.4$} & \multicolumn{2}{c|}{$r=0.05n,p=0.6$} &\multicolumn{2}{c|}{$r=0.1n,p=0.4$} & \multicolumn{2}{c|}{$r=0.1n,p=0.6$}\\
\hline
RGrad&
74&69.898&
40&40.418&
203&323.2&
71&117.31\\
Scaled GD&
149&136.28&
95&91.996&
371.45&560.61&
162&253.78\\
NARG+R &
59&59.225&
35&37.25&
129&212.69&
55&93.698\\
\hline\hline
\multirow{2}{*}{} & \multicolumn{8}{c|}{MS, $n=100$, sample size $pnr$}\\
\cline{2-9}
&\multicolumn{2}{c|}{$r=0.05n,p=4$} & \multicolumn{2}{c|}{$r=0.05n,p=6$} &\multicolumn{2}{c|}{$r=0.1n,p=4$} & \multicolumn{2}{c|}{$r=0.1n,p=6$}\\
\hline
RGrad&
319.1&6.7832&
135.8&4.4625&
303.55&12.821&
131.75&8.237\\
Scaled GD&
425.7&6.3149&
207.4&4.7171&
404.7&11.8&
200.5&8.7509\\
NARG+R &
144.8&3.0753&
81.7&2.6768&
136.55&5.7343&
89.7&5.595\\
\hline\hline
\multirow{2}{*}{} & \multicolumn{8}{c|}{MS, $n=200$, sample size $pnr$}\\
\cline{2-9}
&\multicolumn{2}{c|}{$r=0.05n,p=4$} & \multicolumn{2}{c|}{$r=0.05n,p=6$} &\multicolumn{2}{c|}{$r=0.1n,p=4$} & \multicolumn{2}{c|}{$r=0.1n,p=6$}\\
\hline
RGrad&
337.6&71.826&
142.95&45.913&
320.2&135.86&
138.8&88.308\\
Scaled GD&
450.45&63.297&
217.35&45.782&
425.8&118.98&
210.4&88.039\\
NARG+R &
154.75&32.835&
80.6&25.718&
171.95&72.79&
81&51.26\\
\hline
\end{tabular}
\end{table}

\subsection{\added{Spectral initialization versus Random initialization}}

Taking RGrad as an example, we analyze the impact of initialization, and its settings are shown in Table~\ref{table:parameter}.
Random initialization adds Gaussian noise with different variances based on spectral initialization~\cite{chi2019nonconvex}.
The comparisons are shown in Fig.~\ref{fig:initialization}.
The results show that random initialization has a more significant impact on MC.
As $\sigma$ increases, random initialization moves the unobserved further away from the optimal solution, which makes the algorithm more challenging to satisfy (\ref{eq:Basin-of-Attraction}).
In contrast, spectral initialization speeds up the process, dramatically improving local search efficiency.
On the other hand, MS is less affected by initialization, and linear convergence requires only a few iterations.
Once condition (\ref{eq:Basin-of-Attraction}) holds, the local convergence rate is independent of initialization and is related to the spectral radius.
\begin{table*}[!h]
	\caption{Initialization settings.}
	\label{table:parameter}
	\centering
	\begin{tabular}{c|c|c} 
		\hline
& \textbf{Spectral initialization} & \textbf{Random initialization} \\ 
\hline
\textbf{MC}
& $\boldsymbol X_0=\mathcal P_r(\frac{1}{p}\mathcal P_{\Omega}(\boldsymbol X_{\mathsf{ob}}))$ & \thead{$\boldsymbol X_0=\mathcal P_r(\frac{1}{p}\mathcal P_{\Omega}(\boldsymbol X_{\mathsf{ob}})+\mathcal P_{\bar\Omega}(\boldsymbol Y_{\mathsf{rand}}))$\\ $[\boldsymbol Y_{\mathsf{rand}}]_{i,j}\sim\mathcal N(0,\sigma^2)$} \\ 
\hline
\textbf{MS}
 & $\boldsymbol X_0=\mathcal P_r(\mathcal A^*(\boldsymbol y_{\mathsf{ob}}))$ &\thead{$\boldsymbol X_0=\mathcal P_r(\mathcal A^*(\boldsymbol y_{\mathsf{ob}}+\boldsymbol y_{\mathsf{rand}}))$\\$[\boldsymbol y_{\mathsf{rand}}]_i\sim\mathcal N(0,\sigma^2)$}  \\ 
\hline
	\end{tabular}
\end{table*}

\begin{figure}[!h]
\centering
\includegraphics[width=0.75\textwidth]{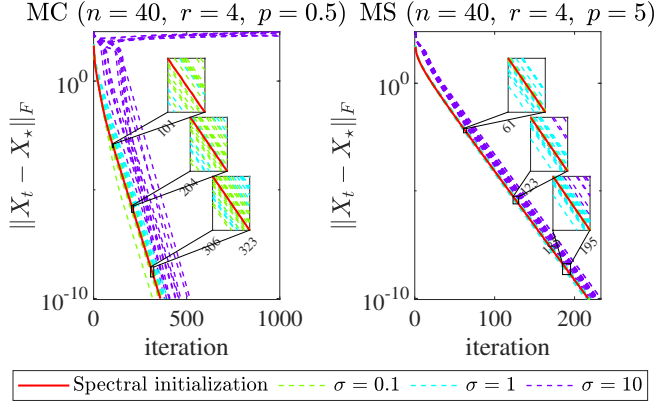}
\caption{Comparison of Spectral initialization and Random initialization with different $\sigma$.}\label{fig:initialization}
\end{figure}

\subsection{\added{Numerical phase transition}}
\label{sec:PhaseTransition}
Finally, we evaluate and compare the recovery rate of NAGR+R with ScaledGD and RGrad.
If $\|\hat {\boldsymbol X}-\boldsymbol X_\star\|_F\leq 10^{-3}$, we judge it as a successful recovery, where $\hat {\boldsymbol X}$ is the output of the algorithm.
The empirical success rate was calculated by repeating 20 trials with different ranks and sample sizes.
For sample size, we use sampling rate $p=|\Omega|/n^2$ for MC and $m=pnr$ for MS.
The empirical phase transitions are presented in Fig.~\ref{fig:phasetrans}, where white indicates successful recovery and black indicates failure for NARG+R.
Our algorithm produces a more extensive white area on both tasks than the others.
The theoretical lower bound for estimating sample size using the iterative matrix is still an open problem, and we leave it as future work.

\begin{figure}[!h]
\centering
\includegraphics[width=0.75\textwidth]{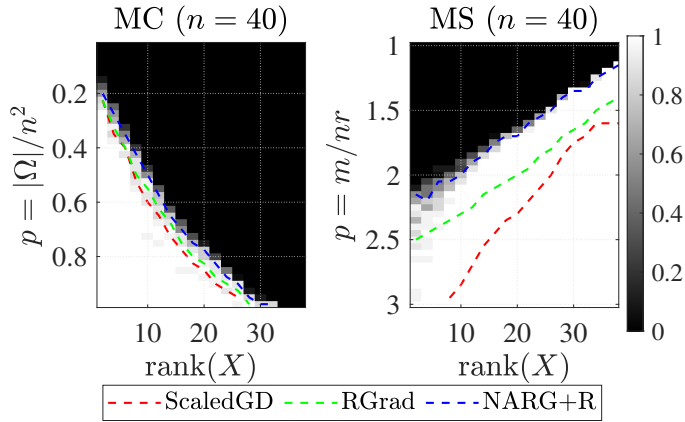}
\caption{Phase transition of NARG+R for MC and MS over 20 random simulations. The dotted line indicates that the success rate of all methods reaches 50\%, which serves as a reference for comparison.}\label{fig:phasetrans}
\end{figure}



\section{Conclusion}
\label{sec:conclusion}

We have proposed a novel efficient Nesterov's Accelerated Riemannian Gradient for the low-rank matrix estimation problem.
To our knowledge, this is the first work to connect manifold and tangent space through orthographic retraction and its inverse.
As the name suggests, it inherits the low computational complexity of the Riemannian Gradient and the fast linear convergence rate of Nesterov's Accelerated Gradient.
The spectral radius estimates the local convergence rate.
The algorithm matches the theoretical optimal rate based on the adaptive restart scheme.
Numerical simulations of both MS and MC illustrate that our algorithm is superior in computational complexity.
It would be interesting to study theoretical optimal sample complexity by the spectral radius of the iterative matrix.
Another further direction is the generalization to tensors, such as Tucker decomposition and Tensor-Train decomposition.



%
%
%

\appendix

\section{Auxiliary lemmas}

\subsection{Relationship of matrix eigenvalues}

\begin{lemma}\label{lem:eigenvalue-part1}
Let $\boldsymbol \Theta$ be a symmetric positive semi-definite matrix, and $\boldsymbol P\in\mathbb R^{n\times n}$ be an orthogonal projection matrix.
Denote $\boldsymbol P^{\perp}=\boldsymbol I-\boldsymbol P$, then there exists an eigenvalue \(\lambda\neq0\) of \((\boldsymbol I-\mu \boldsymbol \Theta ) \boldsymbol P^\perp\), such that
\begin{eqnarray*}
\lambda_{i}(\mu \boldsymbol \Theta \boldsymbol P^\perp +\boldsymbol P)=\lambda_{i}(\mu \boldsymbol \Theta \boldsymbol P^\perp)=1-\lambda.
\end{eqnarray*}
\end{lemma}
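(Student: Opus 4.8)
The plan is to exploit the orthogonal splitting $\mathbb{R}^n=\operatorname{range}(\boldsymbol P)\oplus\operatorname{range}(\boldsymbol P^\perp)$ determined by the projection and to show that all three matrices become block upper-triangular in an adapted basis. First I would set $k=\operatorname{rank}(\boldsymbol P)$ and choose an orthonormal basis whose first $k$ vectors span $\operatorname{range}(\boldsymbol P)$, so that $\boldsymbol P=\operatorname{diag}(\boldsymbol I_k,\boldsymbol 0)$ and $\boldsymbol P^\perp=\operatorname{diag}(\boldsymbol 0,\boldsymbol I_{n-k})$. Partitioning $\boldsymbol\Theta$ conformally and using $\boldsymbol P^\perp\boldsymbol P=\boldsymbol 0$, the three matrices read
\begin{gather*}
\mu\boldsymbol\Theta\boldsymbol P^\perp=\begin{pmatrix}\boldsymbol 0 & \mu\boldsymbol\Theta_{12}\\ \boldsymbol 0 & \mu\boldsymbol\Theta_{22}\end{pmatrix},\\
\mu\boldsymbol\Theta\boldsymbol P^\perp+\boldsymbol P=\begin{pmatrix}\boldsymbol I_k & \mu\boldsymbol\Theta_{12}\\ \boldsymbol 0 & \mu\boldsymbol\Theta_{22}\end{pmatrix},\\
(\boldsymbol I-\mu\boldsymbol\Theta)\boldsymbol P^\perp=\begin{pmatrix}\boldsymbol 0 & -\mu\boldsymbol\Theta_{12}\\ \boldsymbol 0 & \boldsymbol I_{n-k}-\mu\boldsymbol\Theta_{22}\end{pmatrix},
\end{gather*}
where $\boldsymbol\Theta_{22}=\boldsymbol P^\perp\boldsymbol\Theta\boldsymbol P^\perp$ is the compression of $\boldsymbol\Theta$ onto $\operatorname{range}(\boldsymbol P^\perp)$ and is positive semi-definite.

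Next I would read off the spectra from the diagonal blocks. Since a block upper-triangular matrix has characteristic polynomial equal to the product of those of its diagonal blocks, $\mu\boldsymbol\Theta\boldsymbol P^\perp$ has spectrum $\{0\}$ (with multiplicity $k$) together with $\operatorname{spec}(\mu\boldsymbol\Theta_{22})$; the shifted matrix $\mu\boldsymbol\Theta\boldsymbol P^\perp+\boldsymbol P$ has spectrum $\{1\}$ (multiplicity $k$) together with the \emph{same} block $\operatorname{spec}(\mu\boldsymbol\Theta_{22})$; and $(\boldsymbol I-\mu\boldsymbol\Theta)\boldsymbol P^\perp$ has spectrum $\{0\}$ (multiplicity $k$) together with $\operatorname{spec}(\boldsymbol I_{n-k}-\mu\boldsymbol\Theta_{22})$. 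Hence every nonzero eigenvalue $\lambda$ of $(\boldsymbol I-\mu\boldsymbol\Theta)\boldsymbol P^\perp$ arising from the $(2,2)$ block has the form $\lambda=1-\mu\nu$ for some $\nu\in\operatorname{spec}(\boldsymbol\Theta_{22})$, and the corresponding value $\mu\nu=1-\lambda$ is simultaneously an eigenvalue of $\mu\boldsymbol\Theta\boldsymbol P^\perp$ and of $\mu\boldsymbol\Theta\boldsymbol P^\perp+\boldsymbol P$. This is exactly the asserted identity $\lambda_i(\mu\boldsymbol\Theta\boldsymbol P^\perp+\boldsymbol P)=\lambda_i(\mu\boldsymbol\Theta\boldsymbol P^\perp)=1-\lambda$.

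A coordinate-free variant I would keep in reserve uses the fact that $\boldsymbol A\boldsymbol B$ and $\boldsymbol B\boldsymbol A$ share nonzero eigenvalues: taking $\boldsymbol A=\mu\boldsymbol\Theta\boldsymbol P^\perp$ and $\boldsymbol B=\boldsymbol P^\perp$ gives the nonzero-spectrum equality of $\mu\boldsymbol\Theta\boldsymbol P^\perp$ and $\mu\boldsymbol P^\perp\boldsymbol\Theta\boldsymbol P^\perp$, after which one tracks an eigenvector of $(\boldsymbol I-\mu\boldsymbol\Theta)\boldsymbol P^\perp$ through its $\boldsymbol P^\perp$-component. The main obstacle is not the algebra but pinning down the precise meaning of the statement: the equality cannot hold index-by-index for the full spectra, since $\mu\boldsymbol\Theta\boldsymbol P^\perp+\boldsymbol P$ carries the eigenvalue $1$ with multiplicity $k$ exactly where $\mu\boldsymbol\Theta\boldsymbol P^\perp$ carries $0$. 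I would therefore state and use the lemma only for the common eigenvalues coming from the compression $\boldsymbol\Theta_{22}$, that is, the nonzero part of the spectrum, and note that the requirement $\lambda\neq0$ is precisely what excludes the degenerate case $\mu\nu=1$ from the correspondence.
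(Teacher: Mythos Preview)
Your argument is correct and in fact cleaner than the paper's. The paper does not pass to a block-triangular form; instead it fixes an eigenpair $(\lambda,\boldsymbol x)$ of $(\boldsymbol I-\mu\boldsymbol\Theta)\boldsymbol P^\perp$ with $\lambda\neq0$, notes that this forces $(\mu\boldsymbol\Theta\boldsymbol P^\perp+\boldsymbol P)\boldsymbol x=(1-\lambda)\boldsymbol x$, and then runs a case analysis to show $\boldsymbol P\boldsymbol x=\boldsymbol 0$: assuming $\boldsymbol P\boldsymbol x\neq\boldsymbol 0$, it expands $\boldsymbol x$ in the eigenbasis of $\boldsymbol P$ and derives a contradiction from the orthogonality of the two eigenspaces. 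Once $\boldsymbol P\boldsymbol x=\boldsymbol 0$ is known, $\boldsymbol x$ is directly an eigenvector of $\mu\boldsymbol\Theta\boldsymbol P^\perp$ with eigenvalue $1-\lambda$.

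Your block decomposition buys a complete picture of all three spectra at once and makes the role of the compression $\boldsymbol\Theta_{22}$ explicit, so the caveat you raise about the index-by-index reading of $\lambda_i$ is visible immediately. The paper's eigenvector-chasing argument is more elementary (no change of basis, no block determinants) and is tailored to exactly the conclusion used downstream, namely that each nonzero $\lambda$ of $(\boldsymbol I-\mu\boldsymbol\Theta)\boldsymbol P^\perp$ yields $1-\lambda$ as an eigenvalue of the other two matrices. Your coordinate-free variant via $\operatorname{spec}(\boldsymbol A\boldsymbol B)\setminus\{0\}=\operatorname{spec}(\boldsymbol B\boldsymbol A)\setminus\{0\}$ is essentially the same identification, and the paper in fact invokes that fact elsewhere (Appendix~\ref{app:Th:MatIHT}) but not in this lemma's proof.
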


\begin{proof}
Assume \(\text{rank}(\boldsymbol P)=r\).
According to idempotent, we get the eigenvalues of \(\boldsymbol P\) and \(\boldsymbol P^{\perp}\) of the form.
\begin{eqnarray*}
\lambda(\boldsymbol P)=\{\underbrace {1,\ldots, 1}_{r},\underbrace {0,\ldots, 0}_{n-r}\},\lambda(\boldsymbol P^{\perp})=\{\underbrace {1,\ldots, 1}_{n-r},\underbrace {0,\ldots, 0}_{r}\}.
\end{eqnarray*}
Here, we use \(\boldsymbol u_i\) and \(\boldsymbol v_j\) represent the eigenvectors of \(\boldsymbol P\) corresponding to eigenvalues \(1\) and \(0\), respectively.
From the orthogonal relation between \(\boldsymbol P\) and \(\boldsymbol P^{\perp}\), $\boldsymbol P\boldsymbol u_i=\boldsymbol u_i,\boldsymbol P\boldsymbol v_j=\boldsymbol 0,\boldsymbol P^\perp \boldsymbol u_i=\boldsymbol 0,\boldsymbol P^\perp \boldsymbol v_j=\boldsymbol v_j$.
Further, we have
\begin{eqnarray}\label{eq:lambda=0}
(\mu \boldsymbol \Theta \boldsymbol P^\perp +\boldsymbol P)\boldsymbol u_i=\boldsymbol u_i,
(\mu \boldsymbol \Theta \boldsymbol P^\perp )\boldsymbol u_i=\boldsymbol 0, 
(\mu \boldsymbol \Theta \boldsymbol P^\perp +\boldsymbol P)\boldsymbol v_j=(\mu \boldsymbol \Theta \boldsymbol P^\perp)\boldsymbol v_j=\mu \boldsymbol \Theta \boldsymbol v_j.
\end{eqnarray}
It can be known that \(\boldsymbol u_i\) corresponds to the eigenvector of \(\mu \boldsymbol \Theta \boldsymbol P^\perp +\boldsymbol P\) with the eigenvalue of \(1\) and the eigenvector of \(\mu \boldsymbol \Theta \boldsymbol P^\perp\) with the eigenvalue of \(0\), respectively.
Besides, if \(\boldsymbol v_j\) happens to be an eigenvector of \(\mu \boldsymbol \Theta\), then \(\boldsymbol v_j\) is also an eigenvector of \(\mu \boldsymbol \Theta \boldsymbol P^\perp +\boldsymbol P\) and \(\mu \boldsymbol \Theta \boldsymbol P^\perp\).
This conjecture implies the relevance of the above three matrix eigendecompositions.
To this end, assume that there exists a non-zero vector \(\boldsymbol x\) such that
\begin{eqnarray*}
(\boldsymbol I-\mu \boldsymbol \Theta ) \boldsymbol P^\perp \boldsymbol x=\lambda \boldsymbol x,
\end{eqnarray*}
then
\begin{eqnarray*}
(\mu \boldsymbol \Theta \boldsymbol P^\perp +\boldsymbol P)\boldsymbol x=(1-\lambda)\boldsymbol x.
\end{eqnarray*}
For \(\lambda\neq0\), we will discuss \(\boldsymbol P\boldsymbol x=(1-\lambda)\boldsymbol x-\mu \boldsymbol \Theta \boldsymbol P^\perp \boldsymbol x\) case by case:

Case 1:
when \(\boldsymbol P\boldsymbol x=\boldsymbol 0\), i.e., \(\boldsymbol x\) is a linear combination of \(\boldsymbol v_i\).
Obviously, \(\mu \boldsymbol \Theta \boldsymbol P^\perp x=(1-\lambda)\boldsymbol x\) holds.
We obtain \(1-\lambda\) is the eigenvalue of matrix \(\mu \boldsymbol \Theta \boldsymbol P^\perp\).

Case 2:
when \(\boldsymbol P\boldsymbol x\neq\boldsymbol 0\), we know that \(\boldsymbol x\) can always be represented as a linear combination of orthonormal bases, as follows
\begin{eqnarray*}
\boldsymbol x=\sum_{i=1}^r\alpha_i \boldsymbol u_i+\sum_{j=1}^{n-r}\beta_j \boldsymbol v_j,
\end{eqnarray*}
and \(\boldsymbol P\boldsymbol x\neq\boldsymbol 0\) means that there exists \(\alpha_i\neq 0\), otherwise \(\boldsymbol P\boldsymbol x=\sum_{j=1}^{n-r}\beta_j \boldsymbol P\boldsymbol v_j=\boldsymbol 0\) if \(\forall i,\alpha_i=0\).
And we expand the formula to get
\begin{eqnarray*}
(\boldsymbol I-\mu \boldsymbol \Theta ) \sum_{j=1}^{n-r}\beta_j \boldsymbol v_j=(\boldsymbol I-\mu \boldsymbol \Theta ) \boldsymbol P^\perp \boldsymbol x=\lambda \boldsymbol x=\lambda (\sum_{i=1}^r\alpha_i \boldsymbol u_i+\sum_{j=1}^{n-r}\beta_j \boldsymbol v_j),
\end{eqnarray*}
where the left-hand side is a linear representation of the basis vector \(\{\boldsymbol v_j\}\), while the right-hand side is a linear combination of mutually orthogonal basis vectors \(\{\boldsymbol u_i\}\) and \(\{\boldsymbol v_j\}\).
So when \(\lambda\neq0\), \(\forall i,\alpha_i=0\) holds.
This contradicts \(\boldsymbol P\boldsymbol x\neq\boldsymbol 0\).
\end{proof}


\subsection{Perturbation Analysis of Subspaces}

\begin{lemma}[Wedin's $\sin\Theta$ Theorem \cite{chen2021spectral}]
\label{lem:Wedin}
Let $\boldsymbol X_t=\boldsymbol U_t \boldsymbol \Sigma_t \boldsymbol V_t^\top$ and $\boldsymbol X_\star=\boldsymbol U_\star \boldsymbol \Sigma_\star \boldsymbol V_\star^\top$ be the SVD of $\boldsymbol X_t, \boldsymbol X_\star\in\mathbb M_r$, respectively.
If $\|\boldsymbol X_t-\boldsymbol X_\star\|<\sigma_r(\boldsymbol X_\star)$, there is an upper bound for the perturbation of the singular subspace as follows
\begin{eqnarray*}
\max\{\|P_{\boldsymbol U_t}^\perp-P_{\boldsymbol U_\star}^\perp\|,\|P_{\boldsymbol V_t}^\perp-P_{\boldsymbol V_\star}^\perp\|\}\leq\frac{2\|\boldsymbol X_t-\boldsymbol X_\star\|}{\sigma_r(\boldsymbol X_\star)}.
\end{eqnarray*}
\end{lemma}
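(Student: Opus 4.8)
The plan is to bound the two subspace quantities separately and symmetrically; I describe the argument for the left singular subspaces, the right one being identical after transposition. First note that since $P_{\boldsymbol U}^\perp = \boldsymbol I - P_{\boldsymbol U}$, we have $\|P_{\boldsymbol U_t}^\perp - P_{\boldsymbol U_\star}^\perp\| = \|P_{\boldsymbol U_\star} - P_{\boldsymbol U_t}\|$, so it suffices to bound the difference of the two rank-$r$ orthogonal projections. The engine of the proof is a single residual identity: because the columns of $\boldsymbol X_t$ lie in the range of $\boldsymbol U_t$, we have $P_{\boldsymbol U_t}^\perp \boldsymbol X_t = \boldsymbol 0$, and therefore $P_{\boldsymbol U_t}^\perp \boldsymbol X_\star = -P_{\boldsymbol U_t}^\perp(\boldsymbol X_t - \boldsymbol X_\star)$, whence $\|P_{\boldsymbol U_t}^\perp \boldsymbol X_\star\| \le \|\boldsymbol X_t - \boldsymbol X_\star\|$.

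Next I would convert this into a bound on the projection angle. Writing $\boldsymbol X_\star = \boldsymbol U_\star \boldsymbol\Sigma_\star \boldsymbol V_\star^\top$ and using that right multiplication by the orthonormal $\boldsymbol V_\star^\top$ preserves the spectral norm gives $\|P_{\boldsymbol U_t}^\perp \boldsymbol U_\star \boldsymbol\Sigma_\star\| \le \|\boldsymbol X_t - \boldsymbol X_\star\|$. Since $\boldsymbol\Sigma_\star$ is an invertible $r\times r$ matrix with $\|\boldsymbol\Sigma_\star^{-1}\| = 1/\sigma_r(\boldsymbol X_\star)$, the submultiplicative estimate $\|B\| \le \|B\boldsymbol\Sigma_\star\|\,\|\boldsymbol\Sigma_\star^{-1}\|$ applied to $B = P_{\boldsymbol U_t}^\perp \boldsymbol U_\star$ yields $\|P_{\boldsymbol U_t}^\perp \boldsymbol U_\star\| \le \|\boldsymbol X_t - \boldsymbol X_\star\|/\sigma_r(\boldsymbol X_\star)$. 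Finally, since $P_{\boldsymbol U_\star} = \boldsymbol U_\star \boldsymbol U_\star^\top$, right multiplication by the orthonormal $\boldsymbol U_\star^\top$ gives $\|P_{\boldsymbol U_t}^\perp P_{\boldsymbol U_\star}\| = \|P_{\boldsymbol U_t}^\perp \boldsymbol U_\star\| \le \|\boldsymbol X_t - \boldsymbol X_\star\|/\sigma_r(\boldsymbol X_\star)$.

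To pass from this one-sided angle bound to the projection difference I would use the algebraic identity $P_{\boldsymbol U_t} - P_{\boldsymbol U_\star} = P_{\boldsymbol U_t} P_{\boldsymbol U_\star}^\perp - P_{\boldsymbol U_t}^\perp P_{\boldsymbol U_\star}$ together with the triangle inequality, which produces the factor of two in the statement: $\|P_{\boldsymbol U_t} - P_{\boldsymbol U_\star}\| \le \|P_{\boldsymbol U_t}P_{\boldsymbol U_\star}^\perp\| + \|P_{\boldsymbol U_t}^\perp P_{\boldsymbol U_\star}\|$. The crucial point is that $\boldsymbol X_t$ and $\boldsymbol X_\star$ both lie in $\mathbb M_r$, so $P_{\boldsymbol U_t}$ and $P_{\boldsymbol U_\star}$ have the same rank $r$; for equal-rank orthogonal projections the two principal-angle quantities coincide, $\|P_{\boldsymbol U_t}P_{\boldsymbol U_\star}^\perp\| = \|P_{\boldsymbol U_t}^\perp P_{\boldsymbol U_\star}\|$, so both terms are bounded by the same $\|\boldsymbol X_t - \boldsymbol X_\star\|/\sigma_r(\boldsymbol X_\star)$ and the total is at most $2\|\boldsymbol X_t - \boldsymbol X_\star\|/\sigma_r(\boldsymbol X_\star)$. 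Repeating the argument with $\boldsymbol X_t^\top, \boldsymbol X_\star^\top$ (which swaps the roles of $\boldsymbol U$ and $\boldsymbol V$) handles the right subspace, and taking the maximum finishes the proof.

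I expect the main obstacle to be the equal-rank symmetry step: bounding $\|P_{\boldsymbol U_t}P_{\boldsymbol U_\star}^\perp\|$ directly through the residual identity would place $\sigma_r(\boldsymbol X_t)$ rather than $\sigma_r(\boldsymbol X_\star)$ in the denominator, and Weyl's inequality only gives $\sigma_r(\boldsymbol X_t) \ge \sigma_r(\boldsymbol X_\star) - \|\boldsymbol X_t - \boldsymbol X_\star\|$, which is too lossy to recover the clean constant. The symmetry $\|P(\boldsymbol I - Q)\| = \|(\boldsymbol I - P)Q\|$ for equal-rank projections — equivalently the symmetry of the principal angles between the ranges of $\boldsymbol U_t$ and $\boldsymbol U_\star$ — is exactly what lets both terms share the favorable denominator $\sigma_r(\boldsymbol X_\star)$; this is the one nontrivial structural fact the proof relies on, and is where the hypothesis that both iterates have exact rank $r$ enters. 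The condition $\|\boldsymbol X_t - \boldsymbol X_\star\| < \sigma_r(\boldsymbol X_\star)$ then merely guarantees the bound is nontrivial, i.e.\ strictly below the trivial ceiling $\|P - Q\| \le 1$.
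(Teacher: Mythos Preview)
Your argument is correct. The residual identity $P_{\boldsymbol U_t}^\perp \boldsymbol X_\star = -P_{\boldsymbol U_t}^\perp(\boldsymbol X_t-\boldsymbol X_\star)$, the passage through $\boldsymbol\Sigma_\star^{-1}$, and the equal-rank symmetry $\|P_{\boldsymbol U_t}P_{\boldsymbol U_\star}^\perp\|=\|P_{\boldsymbol U_t}^\perp P_{\boldsymbol U_\star}\|$ are all valid and assembled cleanly; the right-subspace case follows by transposition exactly as you say.

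The only comparison to make is that the paper does not actually prove this lemma: it is listed among the auxiliary lemmas in the appendix and attributed to \cite{chen2021spectral} without argument. So there is no ``paper's own proof'' to match against, and your self-contained derivation is strictly more than what the paper supplies. As a minor sharpening you might note that for equal-rank orthogonal projections one in fact has the equality $\|P_{\boldsymbol U_t}-P_{\boldsymbol U_\star}\|=\|P_{\boldsymbol U_t}^\perp P_{\boldsymbol U_\star}\|$ (not merely $\le 2\|P_{\boldsymbol U_t}^\perp P_{\boldsymbol U_\star}\|$), so the constant $2$ in the stated bound is loose; your triangle-inequality route reproduces the lemma exactly as written, which is all that is required here.
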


\begin{lemma}[Perturbation of subspace projection~\cite{wei2016guarantees}]
\label{lem:subspace project-error}
Let $\boldsymbol X_t=\boldsymbol U_t \boldsymbol \Sigma_t \boldsymbol V_t^\top$ and $\boldsymbol X_\star=\boldsymbol U_\star \boldsymbol \Sigma_\star \boldsymbol V_\star^\top$ be the SVD of $\boldsymbol X_t, \boldsymbol X_\star\in\mathbb M_r$, respectively.
If $\|\boldsymbol X_t-\boldsymbol X_\star\|<\sigma_r(\boldsymbol X_\star)$, then the following inequality is satisfied
\begin{eqnarray*}
\|P_{\boldsymbol U_\star}^\perp \boldsymbol X_t P_{\boldsymbol V_\star}^\perp\|_F\leq \frac{\|\boldsymbol X_t-\boldsymbol X_\star\|_F^2}{\sigma_r(\boldsymbol X_\star)}.
\end{eqnarray*}
\end{lemma}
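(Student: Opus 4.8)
The plan is to reduce the two-sided projected quantity to the product of a genuinely first-order factor $\|\boldsymbol E_t\|_F$ (with $\boldsymbol E_t = \boldsymbol X_t - \boldsymbol X_\star$) and a subspace-angle factor, exploiting that $\boldsymbol X_\star$ is annihilated by the complementary projectors while $\boldsymbol X_t$ has rank exactly $r$. Since $\boldsymbol X_\star=\boldsymbol U_\star\boldsymbol\Sigma_\star\boldsymbol V_\star^\top$ has column space $\mathrm{col}(\boldsymbol U_\star)$ and row space $\mathrm{col}(\boldsymbol V_\star)$, we have $P_{\boldsymbol U_\star}^\perp\boldsymbol X_\star=\boldsymbol 0$ and $\boldsymbol X_\star P_{\boldsymbol V_\star}^\perp=\boldsymbol 0$, while $\boldsymbol X_t=\boldsymbol X_tP_{\boldsymbol V_t}$. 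First I would peel off one projector using these identities:
\[
P_{\boldsymbol U_\star}^\perp \boldsymbol X_t P_{\boldsymbol V_\star}^\perp
= P_{\boldsymbol U_\star}^\perp \boldsymbol X_t P_{\boldsymbol V_t} P_{\boldsymbol V_\star}^\perp
= P_{\boldsymbol U_\star}^\perp \boldsymbol E_t P_{\boldsymbol V_t} P_{\boldsymbol V_\star}^\perp ,
\]
where the last equality uses $P_{\boldsymbol U_\star}^\perp \boldsymbol X_t = P_{\boldsymbol U_\star}^\perp \boldsymbol E_t$.

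Next I would apply sub-multiplicativity of the Frobenius and spectral norms to split off $\boldsymbol E_t$ as the first-order term,
\[
\|P_{\boldsymbol U_\star}^\perp \boldsymbol X_t P_{\boldsymbol V_\star}^\perp\|_F
\leq \|P_{\boldsymbol U_\star}^\perp \boldsymbol E_t\|_F\,\|P_{\boldsymbol V_t} P_{\boldsymbol V_\star}^\perp\|
\leq \|\boldsymbol E_t\|_F\,\|P_{\boldsymbol V_t} P_{\boldsymbol V_\star}^\perp\| ,
\]
where the residual factor $\|P_{\boldsymbol V_t} P_{\boldsymbol V_\star}^\perp\|=\|P_{\boldsymbol V_\star}^\perp \boldsymbol V_t\|$ is exactly the sine of the largest principal angle between the row spaces of $\boldsymbol X_t$ and $\boldsymbol X_\star$. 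It remains to bound this angle by $\|\boldsymbol E_t\|/\sigma_r(\boldsymbol X_\star)$; combining with $\|\boldsymbol E_t\|\leq\|\boldsymbol E_t\|_F$ then yields the claimed $\|\boldsymbol E_t\|_F^2/\sigma_r(\boldsymbol X_\star)$.

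The crux, and the step I expect to be the main obstacle, is obtaining the \emph{clean} constant on the angle. A direct expansion of a perturbed row-space vector through $\boldsymbol X_t$ naturally produces $\sigma_r(\boldsymbol X_t)$ in the denominator, which the hypothesis $\|\boldsymbol E_t\|<\sigma_r(\boldsymbol X_\star)$ does not control from below in the desired way; and a naive appeal to the factor-two estimate in Lemma~\ref{lem:Wedin} would only give $2\|\boldsymbol E_t\|_F^2/\sigma_r(\boldsymbol X_\star)$. The way I would remove the factor of two is to invoke the refined Wedin $\sin\Theta$ bound with $\boldsymbol X_\star$ playing the role of the unperturbed matrix: since $\boldsymbol X_t\in\mathbb M_r$ forces $\sigma_{r+1}(\boldsymbol X_t)=0$, the relevant spectral gap becomes $\sigma_r(\boldsymbol X_\star)-\sigma_{r+1}(\boldsymbol X_t)=\sigma_r(\boldsymbol X_\star)$, and the hypothesis $\|\boldsymbol E_t\|<\sigma_r(\boldsymbol X_\star)$ guarantees this gap is positive so the theorem applies. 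This gives $\|P_{\boldsymbol V_\star}^\perp \boldsymbol V_t\|\leq\|\boldsymbol E_t\|/\sigma_r(\boldsymbol X_\star)$ with constant one, which is precisely where the rank-$r$ structure of $\boldsymbol X_t$ is essential.

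Finally, I would note that an entirely symmetric route peels the left projector instead, writing $P_{\boldsymbol U_\star}^\perp \boldsymbol X_t P_{\boldsymbol V_\star}^\perp = P_{\boldsymbol U_\star}^\perp P_{\boldsymbol U_t}\boldsymbol E_t P_{\boldsymbol V_\star}^\perp$ and bounding $\|P_{\boldsymbol U_\star}^\perp \boldsymbol U_t\|$ by the same sharp $\sin\Theta$ argument; a quick sanity check that the two peelings produce the identical constant would confirm the estimate is not accidentally loose on one side.
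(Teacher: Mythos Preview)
The paper does not supply its own proof of this lemma; it is stated as an auxiliary result with a citation to Wei et al.~\cite{wei2016guarantees}. Your argument is correct and is essentially the standard one from that reference: peel one projector via $\boldsymbol X_t=\boldsymbol X_tP_{\boldsymbol V_t}$ and $P_{\boldsymbol U_\star}^\perp\boldsymbol X_\star=\boldsymbol 0$, then control the subspace-angle factor $\|P_{\boldsymbol V_t}P_{\boldsymbol V_\star}^\perp\|$ by $\|\boldsymbol E_t\|/\sigma_r(\boldsymbol X_\star)$.

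One remark on the step you flag as the obstacle: you need not invoke a refined Wedin statement at all. Since both row spaces have dimension $r$, the largest principal angle is symmetric, so $\|P_{\boldsymbol V_t}P_{\boldsymbol V_\star}^\perp\|=\|P_{\boldsymbol V_t}^\perp P_{\boldsymbol V_\star}\|=\|P_{\boldsymbol V_t}^\perp\boldsymbol V_\star\|$. Now insert $\boldsymbol\Sigma_\star\boldsymbol\Sigma_\star^{-1}$ and use $P_{\boldsymbol V_t}^\perp\boldsymbol X_t^\top=\boldsymbol 0$:
\[
P_{\boldsymbol V_t}^\perp\boldsymbol V_\star
= P_{\boldsymbol V_t}^\perp\boldsymbol X_\star^\top\boldsymbol U_\star\boldsymbol\Sigma_\star^{-1}
= -P_{\boldsymbol V_t}^\perp\boldsymbol E_t^\top\boldsymbol U_\star\boldsymbol\Sigma_\star^{-1},
\]
whence $\|P_{\boldsymbol V_t}^\perp\boldsymbol V_\star\|\leq\|\boldsymbol E_t\|/\sigma_r(\boldsymbol X_\star)$ with constant one and the correct denominator $\sigma_r(\boldsymbol X_\star)$. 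This elementary route is exactly the mechanism behind the ``refined'' Wedin bound you cite (the vanishing $\sigma_{r+1}(\boldsymbol X_t)$ is what makes the symmetry swap land on $\boldsymbol X_\star$), but writing it out directly removes any ambiguity about which version of the theorem carries which constant.
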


\section{Proof of Theorem~\ref{th:MatIHT}}
\label{app:Th:MatIHT}
\begin{proof}
Let the residual matrix \(\boldsymbol E_t=\boldsymbol X_t-\boldsymbol X_\star\).
According to the iteration, we have
\begin{eqnarray}\label{eq:iterate-residual}
\begin{aligned}
\boldsymbol E_{t+1}&=\boldsymbol X_{t+1}-\boldsymbol X_\star\\
&=\mathcal P_r(\boldsymbol X_{t}-\mu_t\nabla f(\boldsymbol X_t))-\boldsymbol X_\star\\
&=\mathcal P_r(\boldsymbol X_\star +\boldsymbol X_{t}-\boldsymbol X_\star-\mu_t\nabla f(\boldsymbol X_t))-\boldsymbol X_\star\\
&=\mathcal P_r(\boldsymbol X_\star +\boldsymbol E_{t}-\mu_t\nabla f(\boldsymbol X_t))-\boldsymbol X_\star\\
&\stackrel{(a)}{=}(\boldsymbol E_{t}-\mu_t\nabla f(\boldsymbol X_t))-P_{\boldsymbol U_\star}^\perp(\boldsymbol E_{t}-\mu_t\nabla f(\boldsymbol X_t))P_{\boldsymbol V_\star}^\perp+\mathcal O(\|\boldsymbol E_{t}\|_F^2),\\
\end{aligned}
\end{eqnarray}
where \((a)\) is the first-order expansion (\ref{eq:TSVD-expansion}) of the truncated SVD. 
\added{Since $\|\mathcal I-\mu \mathcal A^*\mathcal A\|\leq 1$, we have $\|\boldsymbol E_{t}-\mu\nabla f(\boldsymbol X_t)\|_F=\|(\mathcal I-\mu_t\mathcal A^*\mathcal A)(\boldsymbol E_t)\|_F\leq\|(\boldsymbol E_t)\|_F\leq\sigma_{\replaced{r}{\min}}(\boldsymbol X_\star)/2$, which verifies the condition of Lemma~\ref{lem:TSVD-Perturbation} holds.}
After vectorizing \(\boldsymbol e_{t+1}=\text{vec}(\boldsymbol E_t)\), we get
\begin{eqnarray*}
\begin{aligned}
\boldsymbol e_{t+1}
&=\text{vec}((\boldsymbol E_{t}-\mu_t\nabla f(\boldsymbol X_t))-P_{\boldsymbol U_\star}^\perp(\boldsymbol E_{t}-\mu_t\nabla f(\boldsymbol X_t))P_{\boldsymbol V_\star}^\perp)+\mathcal O(\|\boldsymbol E_t\|_F^2)\\
&\stackrel{(a)}{=}(\boldsymbol I-P_{\boldsymbol V_\star}^\perp\otimes P_{\boldsymbol U_\star}^\perp)\text{vec}(\boldsymbol E_{t}-\mu_t\nabla f(\boldsymbol X_t))+\mathcal O(\|\boldsymbol E_t\|_F^2)\\
&\stackrel{(b)}{=}\underbrace{(\boldsymbol I-P_{\boldsymbol V_\star}^\perp\otimes P_{\boldsymbol U_\star}^\perp)(\boldsymbol I-\mu_t \boldsymbol \Theta)}_{\boldsymbol H(\mu_t)}\boldsymbol e_{t}+\mathcal O(\|\boldsymbol e_{t}\|_2^2),\\
\end{aligned}
\end{eqnarray*}
where \((a)\) uses vectorization of Kronecker product, i.e., \(\text{vec}(\boldsymbol A\boldsymbol B\boldsymbol C)=(\boldsymbol C^\top \otimes \boldsymbol A)\text{vec}(\boldsymbol B)\).
\((b)\) is based on (\ref{eq:vec-grad}) and \(\|\boldsymbol E_t\|_F=\|\boldsymbol e_{t}\|_2\).
The convergence rate with the constant stepsize \(\mu_t\equiv\mu\) is determined by the spectral radius of the matrix \(\boldsymbol H=\boldsymbol H(\mu)\).
\begin{eqnarray*}
\rho(\boldsymbol H)=\max_\lambda |\lambda_i(\boldsymbol H)|=\max{(|\lambda_{\max}(\boldsymbol H)|,|\lambda_{\min}(\boldsymbol H)|)}.
\end{eqnarray*}
Thus, the maximum and minimum eigenvalues of \(\boldsymbol H\) should be compared.
Taking MS as an example, we compute the largest eigenvalue.
\begin{eqnarray*}
\begin{aligned}
\lambda_{\max}(\boldsymbol H_{\mathsf{MS}})
&=1-\lambda_{\min}(\mu \boldsymbol \Theta+P_{\boldsymbol V_\star}^\perp\otimes P_{\boldsymbol U_\star}^\perp-\mu  (P_{\boldsymbol V_\star}^\perp\otimes P_{\boldsymbol U_\star}^\perp)\boldsymbol \Theta)\\
&\stackrel{(a)}{=}1-\lambda_{\min}(\mu \boldsymbol \Theta-\mu  (P_{\boldsymbol V_\star}^\perp\otimes P_{\boldsymbol U_\star}^\perp)\boldsymbol \Theta)\\
&=1-\mu\lambda_{\min}((\boldsymbol I-P_{\boldsymbol V_\star}^\perp\otimes P_{\boldsymbol U_\star}^\perp)\boldsymbol \Theta),\\
\end{aligned}
\end{eqnarray*}
where \((a)\) is based on Lemma~\ref{lem:eigenvalue-part1}.
Similarly, the minimum eigenvalue results are as follows:
\begin{eqnarray*}
\lambda_{\min}(\boldsymbol H_{\mathsf{MS}})=1-\mu\lambda_{\max}((\boldsymbol I-P_{\boldsymbol V_\star}^\perp\otimes P_{\boldsymbol U_\star}^\perp)\boldsymbol \Theta).
\end{eqnarray*}
Obviously, the optimal spectral radius occurs when \(\lambda_{\max}(\boldsymbol H_{\mathsf{MS}})=-\lambda_{\min}(\boldsymbol H_{\mathsf{MS}})\), i.e., $1-\mu\lambda_{\min}=\mu\lambda_{\max}-1$.
The corresponding stepsize is $\mu_\dagger=\frac{2}{\lambda_{\min}+\lambda_{\max}}$.
\added{Due to $\boldsymbol I-P_{\boldsymbol V_\star}^\perp\otimes P_{\boldsymbol U_\star}^\perp$ is orthogonal projector, we have $\|\boldsymbol I-P_{\boldsymbol V_\star}^\perp\otimes P_{\boldsymbol U_\star}^\perp\|=1$.
It is easy to check}
\begin{eqnarray*}
\rho(\boldsymbol H)\leq \|\boldsymbol I-P_{\boldsymbol V_\star}^\perp\otimes P_{\boldsymbol U_\star}^\perp\| \|\boldsymbol I-\mu_t \boldsymbol \Theta\|\leq \|\boldsymbol I-\mu_t \boldsymbol \Theta\| \leq 1,
\end{eqnarray*}
\added{so} we can get (\ref{eq:IHT-rho}).
Especially for MC, as shown in (\ref{eq:sample-matrix}),
we get a simplified result similar to~\cite{vu2019accelerating}.
\begin{eqnarray*}
\begin{aligned}
\lambda_{\max}(\boldsymbol H_{\mathsf{MC}})
&\stackrel{(a)}{=}1-\mu\lambda_{\min}(  \boldsymbol S_{\Omega}^\top(I-P_{\boldsymbol V}^\perp\otimes P_{\boldsymbol U}^\perp)\boldsymbol S_{\Omega})
\stackrel{(b)}{=}1-\mu\lambda_{\min}( \boldsymbol I- \boldsymbol S_{\Omega}^\top(P_{\boldsymbol V_\star}^\perp\otimes P_{\boldsymbol U_\star}^\perp)\boldsymbol S_{\Omega})\\
&=1-\mu(1-\lambda_{\max}( \boldsymbol S_{\Omega}^\top(P_{\boldsymbol V_\star}^\perp\otimes P_{\boldsymbol U_\star}^\perp)\boldsymbol S_{\Omega}))
\stackrel{(c)}{=}1-\mu(1-\lambda_{\max}(\boldsymbol S_{\Omega}\boldsymbol S_{\Omega}^\top(P_{\boldsymbol V_\star}^\perp\otimes P_{\boldsymbol U_\star}^\perp)))\\
&\stackrel{(d)}{=}1-\mu(\lambda_{\min}(P_{\boldsymbol V_\star}^\perp\otimes P_{\boldsymbol U_\star}^\perp-\boldsymbol S_{\Omega}\boldsymbol S_{\Omega}^\top(P_{\boldsymbol V_\star}^\perp\otimes P_{\boldsymbol U_\star}^\perp)))
\stackrel{(e)}{=}1-\mu(\lambda_{\min}(\boldsymbol S_{\bar\Omega}\boldsymbol S_{\bar\Omega}^\top(P_{\boldsymbol V_\star}^\perp\otimes P_{\boldsymbol U_\star}^\perp)))\\
&\stackrel{(f)}{=}1-\mu(\lambda_{\min}(\boldsymbol S_{\bar\Omega}^\top(P_{\boldsymbol V_\star}^\perp\otimes P_{\boldsymbol U_\star}^\perp)\boldsymbol S_{\bar\Omega}))
=1-\mu(\sigma_{\min}^2(\boldsymbol S_{\bar\Omega}^\top(\boldsymbol V_{_\star\perp} \otimes \boldsymbol U_{_\star\perp}))),
\end{aligned}
\end{eqnarray*}
where \((a)\), \((c)\) and \((f)\) are based on the fact that $AB$ and $BA$ 
have the same eigenvalues, \((b)\) and \((e)\) correspond to the properties of the sampling matrix in (\ref{eq:sample-matrix}), \((d)\) uses Lemma~\ref{lem:eigenvalue-part1}.
Similarly, the minimum eigenvalue results are as follows
\begin{eqnarray*}
\lambda_{\min}(\boldsymbol H_{\mathsf{MC}})=1-\mu(\sigma_{\max}^2(\boldsymbol S_{\bar\Omega}^\top(\boldsymbol V_{_\star\perp}\otimes \boldsymbol U_{_\star\perp}))).
\end{eqnarray*}
We can estimate the convergence rate of Algorithm~\ref{Alg-IHT}.
\end{proof}

\section{Proof of Proposition~\ref{prop:NIHT-Convergence}}
\label{app:prop:NIHT-Convergence}

\begin{proof}
Vectorizing (\ref{eq:proj-gradient}) yields $\nabla_\mathcal R f(\boldsymbol x_t)=\boldsymbol P\nabla f(\boldsymbol x_t)$, where $\boldsymbol P=(\boldsymbol I-P_{\boldsymbol U}^\perp\otimes P_{\boldsymbol V}^\perp)$ is the orthogonal projection matrix.
Bring (\ref{eq:mu-close-form}) into the loss function to get
\begin{eqnarray*}
\begin{aligned}
f(\boldsymbol x_{t+1})&=\frac{1}{2}(\boldsymbol x_{t+1}-\boldsymbol x_\star)^\top \boldsymbol \Theta (\boldsymbol x_{t+1}-\boldsymbol x_\star)\\
&=\frac{1}{2}(\boldsymbol x_t -\mu_t \nabla_\mathcal R f(\boldsymbol x_t)-\boldsymbol x_\star)^\top \boldsymbol \Theta (\boldsymbol x_t -\mu_t \nabla_\mathcal R f(\boldsymbol x_t)-\boldsymbol x_\star)+\mathcal O(\|\boldsymbol x_t -\boldsymbol x_\star\|_2^2)\\
&=f(\boldsymbol x_t)-\mu_t\nabla_\mathcal R f(\boldsymbol x_t)^\top \nabla f(\boldsymbol x_t)+\frac{\mu_t^2}{2}\nabla_\mathcal R f(\boldsymbol x_t)^\top \boldsymbol \Theta \nabla_\mathcal R f(\boldsymbol x_t)+\mathcal O(\|\boldsymbol x_t -\boldsymbol x_\star\|_2^2)\\
&=f(\boldsymbol x_t)-\frac{(\nabla_\mathcal R f(\boldsymbol x_t)^\top \nabla f(\boldsymbol x_t))^2}{2\nabla_\mathcal R f(\boldsymbol x_t)^\top \boldsymbol \Theta \nabla_\mathcal R f(\boldsymbol x_t)}+\mathcal O(\|\boldsymbol x_t -\boldsymbol x_\star\|_2^2)\\
&\stackrel{(a)}{=}\left(1-\frac{(\nabla_\mathcal R f(\boldsymbol x_t)^\top \nabla f(\boldsymbol x_t))^2}{(\nabla f(\boldsymbol x_t)^\top (\boldsymbol P\boldsymbol \Theta \boldsymbol P) \nabla f(\boldsymbol x_t))(\nabla_\mathcal R f(\boldsymbol x_t)^\top (\boldsymbol P\boldsymbol \Theta \boldsymbol P)^{+} \nabla_\mathcal R f(\boldsymbol x_t))}\right)f(\boldsymbol x_t)+\mathcal O(\|\boldsymbol x_t -\boldsymbol x_\star\|_2^2)\\
&\stackrel{(b)}{\leq}\left(1-\frac{4}{\frac{\lambda_{\max}(\boldsymbol P\boldsymbol \Theta \boldsymbol P)}{\lambda_{\min}(\boldsymbol P\boldsymbol \Theta \boldsymbol P)} + 2+\frac{\lambda_{\min}(\boldsymbol P\boldsymbol \Theta \boldsymbol P)}{\lambda_{\max}(\boldsymbol P\boldsymbol \Theta \boldsymbol P)}}\right)f(\boldsymbol x_t)+\mathcal O(\|\boldsymbol x_t -\boldsymbol x_\star\|_2^2)\\
&\stackrel{(c)}{\leq}\left(\frac{\kappa-1}{\kappa+1}\right)^2 f(\boldsymbol x_t)+\mathcal O(\|\boldsymbol x_t -\boldsymbol x_\star\|_2^2),\\
\end{aligned}
\end{eqnarray*}
where $(a)$ is because of $f(\boldsymbol x)=\frac{1}{2}(\boldsymbol x-\boldsymbol x_\star)^\top \boldsymbol \Theta (\boldsymbol x-\boldsymbol x_\star)=\frac{1}{2}\nabla_\mathcal R f(\boldsymbol x)^\top (\boldsymbol P\boldsymbol \Theta \boldsymbol P)^{+} \nabla_\mathcal R f(\boldsymbol x_t)$.
Furthermore, since $\frac{|\nabla_\mathcal R f(\boldsymbol x_t)^\top \nabla f(\boldsymbol x_t)|}{\|\nabla_\mathcal R f(\boldsymbol x_t)^\top\|_2 \|\nabla f(\boldsymbol x_t)\|_2}\geq0$, we apply the generalized Kantorovich type inequality~\cite{huang2005direct} in Lemma~\ref{lem:Kantorovich} to get $(b)$.
To prove $(c)$, we only need to show
\begin{eqnarray*}
\begin{aligned}
\frac{\lambda_{\max}(\boldsymbol P\boldsymbol \Theta \boldsymbol P)}{\lambda_{\min}(\boldsymbol P\boldsymbol \Theta \boldsymbol P)}
&=\|\boldsymbol P\boldsymbol \Theta \boldsymbol P\|\|(\boldsymbol P\boldsymbol \Theta)^+\boldsymbol P^+\|
\leq\|\boldsymbol P\boldsymbol \Theta\| \|\boldsymbol P\|\|\boldsymbol P^+\|\|(\boldsymbol P\boldsymbol \Theta)^+\|\\
&=\|\boldsymbol P\boldsymbol \Theta\| \|(\boldsymbol P\boldsymbol \Theta)^+\|
= \frac{\lambda_{\max}(\boldsymbol P\boldsymbol \Theta)}{\lambda_{\min}(\boldsymbol P\boldsymbol \Theta)}
:=\kappa,
\end{aligned}
\end{eqnarray*}
here, $\lambda_{\min}(\cdot)$ means the smallest non-zero eigenvalue.
\end{proof}

\begin{lemma}[Kantorovich inequality~\cite{huang2005direct}]\label{lem:Kantorovich}
Let $\boldsymbol A$ be a symmetric (semi-) positive definite matrix, and $\lambda_{\max}$ and $\lambda_{\min}$ correspond to the largest and smallest non-zero eigenvalues, respectively.
If $\boldsymbol x,\boldsymbol y\in\mathbb R^n$ satisfies $\frac{|\boldsymbol x^\top \boldsymbol y|}{\|\boldsymbol x\|_2 \|\boldsymbol y\|_2}\geq\cos\theta$ with $0\leq \theta\leq \frac{\pi}{2}$, then
\begin{eqnarray*}
\frac{(\boldsymbol x^\top \boldsymbol y)^2}{(\boldsymbol x^\top \boldsymbol A\boldsymbol x)(\boldsymbol y^\top \boldsymbol A^{+}\boldsymbol y)}\geq\frac{4}{\kappa + 2+\kappa^{-1}},
\end{eqnarray*}
where $\kappa=\frac{\lambda_{\max}}{\lambda_{\min}}\frac{1+\sin\theta}{1-\sin\theta}$ and $(\cdot)^{+}$ is the Moore-Penrose inverse.
When $\boldsymbol A$ is positive definite and $\boldsymbol x=\boldsymbol y$, i.e., $\boldsymbol A^{+}=\boldsymbol A^{-1}$ and $\theta=0$, the above inequality degenerates into the traditional form.
\end{lemma}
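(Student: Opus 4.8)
The plan is to read the inequality as a constrained optimization over all pairs $(\boldsymbol x,\boldsymbol y)$ and to reduce it, via orthogonal invariance and a spectral-concentration argument, to a two-dimensional problem supported on the extreme eigenvectors of $\boldsymbol A$, where the constant can be computed in closed form. First, by homogeneity the quantity $\frac{(\boldsymbol x^\top\boldsymbol y)^2}{(\boldsymbol x^\top\boldsymbol A\boldsymbol x)(\boldsymbol y^\top\boldsymbol A^+\boldsymbol y)}$ is invariant under separate scalings of $\boldsymbol x$ and $\boldsymbol y$, so I normalize $\|\boldsymbol x\|_2=\|\boldsymbol y\|_2=1$. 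Since $\boldsymbol x^\top\boldsymbol A\boldsymbol x$ and $\boldsymbol y^\top\boldsymbol A^+\boldsymbol y$ depend only on the $\mathrm{range}(\boldsymbol A)$-components of the two vectors, I would first reduce to the positive definite case: a short monotonicity argument shows that placing mass in $\ker\boldsymbol A$ only tightens the angle constraint $|\boldsymbol x^\top\boldsymbol y|\ge\cos\theta\,\|\boldsymbol x\|_2\|\boldsymbol y\|_2$ without changing the denominator, so the minimizing configuration has $\boldsymbol x,\boldsymbol y\in\mathrm{range}(\boldsymbol A)$ and one may replace $\boldsymbol A^+$ by $\boldsymbol A^{-1}$ and treat $\boldsymbol A\succ 0$.

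Next, I diagonalize $\boldsymbol A=\boldsymbol U\boldsymbol\Lambda\boldsymbol U^\top$ and pass to eigencoordinates $\boldsymbol a=\boldsymbol U^\top\boldsymbol x$, $\boldsymbol b=\boldsymbol U^\top\boldsymbol y$, turning the claim into: minimize $\frac{(\boldsymbol a^\top\boldsymbol b)^2}{\big(\sum_i\lambda_i a_i^2\big)\big(\sum_i\lambda_i^{-1}b_i^2\big)}$ subject to $(\boldsymbol a^\top\boldsymbol b)^2\ge\cos^2\theta$. The heart of the argument is a \emph{two-point reduction}: holding $\boldsymbol a^\top\boldsymbol b$ and the two quadratic forms fixed, the convexity of $t\mapsto t^{-1}$ (the supporting-line trick underlying the classical Kantorovich inequality) together with a coordinate-exchange perturbation shows that any mass spread across intermediate eigenvalues can be pushed toward the indices attaining $\lambda_{\min}$ and $\lambda_{\max}$ without increasing the ratio, so the extremal configuration lives in the corresponding $2\times 2$ block.

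Finally, on the reduced problem with eigenvalues scaled to $\{1,\kappa_0\}$, $\kappa_0=\lambda_{\max}/\lambda_{\min}$, I parametrize $\boldsymbol a=(\cos\alpha,\sin\alpha)$, $\boldsymbol b=(\cos\beta,\sin\beta)$ and minimize the resulting rational function of $(\alpha,\beta)$ subject to $|\alpha-\beta|\le\theta$ by Lagrange multipliers. The optimum saturates the angle constraint and returns $\frac{4}{\kappa+2+\kappa^{-1}}$ with the inflated condition number $\kappa=\kappa_0\frac{1+\sin\theta}{1-\sin\theta}$; the specialization $\theta=0$, $\boldsymbol x=\boldsymbol y$ recovers the classical constant, as stated. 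The appearance of the factor $\frac{1+\sin\theta}{1-\sin\theta}$ is precisely what makes $\frac{\kappa-1}{\kappa+1}=\frac{\sigma_0+\sin\theta}{1+\sigma_0\sin\theta}$ with $\sigma_0=\frac{\kappa_0-1}{\kappa_0+1}$, i.e. the spectral "rotation" $\sigma_0$ composes with the vector angle $\theta$ by a $\tanh$-addition law.

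I expect the main obstacle to be the two-vector spectral-concentration step. In the PD case the left side equals $\cos^2\phi$ with $\phi=\angle(\boldsymbol A^{1/2}\boldsymbol x,\boldsymbol A^{-1/2}\boldsymbol y)$, which suggests bounding $\phi$ by the angular triangle inequality, splitting off the single-vector rotation $\angle(\boldsymbol A^{1/2}\boldsymbol y,\boldsymbol A^{-1/2}\boldsymbol y)\le\arcsin\sigma_0$ from the $\boldsymbol A^{1/2}$-distortion of the angle $\theta$. This soft route is clean but lossy: it yields $|\sin\phi|\le\sin(\arcsin\sigma_0+\theta)$, which strictly overshoots the tight value $\frac{\sigma_0+\sin\theta}{1+\sigma_0\sin\theta}$, so the sharp constant genuinely requires identifying the extremal $2\times 2$ configuration and carrying out the constrained optimization there rather than any triangle-inequality surrogate.
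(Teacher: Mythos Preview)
The paper does not prove this lemma. It is stated as Lemma~\ref{lem:Kantorovich} with a citation to~\cite{huang2005direct} and is then invoked as a black box in step~$(b)$ of the proof of Proposition~\ref{prop:NIHT-Convergence}; no argument for the inequality itself appears anywhere in the paper. So there is nothing to compare your proposal against.

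As for your sketch on its own merits: the overall architecture---homogeneity, diagonalization, two-point spectral concentration, and a closed-form optimization on the $\{\lambda_{\min},\lambda_{\max}\}$ block under the angle constraint---is exactly the shape of the proof in the cited source, and your identification of the $\tanh$-addition structure $(\kappa-1)/(\kappa+1)=(\sigma_0+\sin\theta)/(1+\sigma_0\sin\theta)$ is correct and illuminating. One step that deserves more care is the semidefinite reduction: you assert that ``placing mass in $\ker\boldsymbol A$ only tightens the angle constraint without changing the denominator,'' but while the denominator $(\boldsymbol x^\top\boldsymbol A\boldsymbol x)(\boldsymbol y^\top\boldsymbol A^+\boldsymbol y)$ indeed depends only on the $\mathrm{range}(\boldsymbol A)$-components, the numerator $(\boldsymbol x^\top\boldsymbol y)^2$ also picks up the cross term $\boldsymbol x_k^\top\boldsymbol y_k$ from the kernel components, so projecting onto $\mathrm{range}(\boldsymbol A)$ changes both the ratio and the feasible set simultaneously. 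You need to argue that for any feasible pair there is a pair supported on $\mathrm{range}(\boldsymbol A)$ that is still feasible and has no larger ratio; this is doable but is not the one-liner you present. Your closing remark that the angular triangle-inequality route is strictly lossy is correct and is a good reason to commit to the two-dimensional reduction rather than a soft bound.
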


\section{Proof of Theorem~\ref{th:MatNAG}}
\label{app:th:MatNAG}

\begin{proof}
According to Algorithm~\ref{Alg-NAG}, we calculate the error as follows.
\begin{eqnarray*}
\begin{aligned}
\boldsymbol E_{t+1}&=\boldsymbol X_{t+1}-\boldsymbol X_\star\\
&=\mathcal P_r(\boldsymbol Y_t-\mu_t \nabla f(\boldsymbol Y_t))-\boldsymbol X_\star\\
&=\mathcal P_r(\boldsymbol X_\star+\boldsymbol Y_t-\boldsymbol X_\star-\mu_t \nabla f(\boldsymbol Y_t))-\boldsymbol X_\star\\
&=(\boldsymbol Y_t-\boldsymbol X_\star-\mu_t \nabla f(\boldsymbol Y_t))-P_{\boldsymbol U_\star}^\perp(\boldsymbol Y_t-\boldsymbol X_\star-\mu_t \nabla f(\boldsymbol Y_t))P_{\boldsymbol V_\star}^\perp+\mathcal O(\|\boldsymbol Y_t-\boldsymbol X_\star\|_F^2).\\
\end{aligned}
\end{eqnarray*}
After vectorizing, we have
\begin{eqnarray*}
\begin{aligned}
\boldsymbol e_{t+1}&=\underbrace{(\boldsymbol I-P_{\boldsymbol V_\star}^\perp\otimes P_{\boldsymbol U_\star}^\perp)(\boldsymbol I-\mu_t \boldsymbol \Theta)}_{\boldsymbol H_t=\boldsymbol H(\mu_t)}\text{vec}(\boldsymbol Y_t-\boldsymbol X_\star)+\mathcal O(\|\boldsymbol Y_t-\boldsymbol X_\star\|_F^2)\\
&=(1+\eta_t)\boldsymbol H_t\boldsymbol e_{t}-\eta_t \boldsymbol H_t\boldsymbol e_{t-1}+\mathcal O(\|\boldsymbol e_{t}\|_2^2).\\
\end{aligned}
\end{eqnarray*}
Stacking the errors of two adjacent iterations, we get the recursive form
\begin{eqnarray*}
\begin{aligned}
\begin{pmatrix}
\boldsymbol e_{t+1}\\\boldsymbol e_{t}
\end{pmatrix}=\underbrace {\begin{pmatrix}
(1+\eta_t)\boldsymbol H_t &-\eta_t\boldsymbol H_t\\
\boldsymbol I&\boldsymbol 0
\end{pmatrix}}_{\boldsymbol T}
\begin{pmatrix}
\boldsymbol e_{t}\\\boldsymbol e_{t-1}
\end{pmatrix}.
\end{aligned}
\end{eqnarray*}
The convergence rate depends on the spectral radius \(\rho(\boldsymbol T)\) of \(\boldsymbol T\in\mathbb R^{2n_1n_2\times 2n_1n_2}\).
According to the eigendecomposition in~\cite{vu2019accelerating}, \(\boldsymbol T\) is similar to the block diagonal matrix composed of the \(2\times 2\) matrix \(\boldsymbol T_j\), i.e., \(\boldsymbol T\sim \text{bldiag}(\boldsymbol T_1,\boldsymbol T_2,\ldots,\boldsymbol T_{n_1n_2})\), where each block \(\boldsymbol T_j\in\mathbb R^{2\times 2}\) is form
\begin{eqnarray*}
\boldsymbol T_j=\begin{pmatrix}
(1+\eta_t)(1-\mu_t\lambda_j) &-\eta_t(1-\mu_t\lambda_j)\\
1&0
\end{pmatrix}.
\end{eqnarray*}
where \(\lambda_j\) is the eigenvalue of matrix \((\boldsymbol I-P_{\boldsymbol V_\star}^\perp\otimes P_{\boldsymbol U_\star}^\perp)\boldsymbol \Theta\).
Next, we aim to find the eigenvalues of the matrix \(\boldsymbol T_j\) using the characteristic polynomial.
\begin{eqnarray}\label{eq:quadratic-eqnarray}
r^2-(1+\eta_t)(1-\mu_t\lambda_j)r+\eta_t(1-\mu_t\lambda_j)=0.
\end{eqnarray}
According to the quadratic formula, set the discriminant \(\Delta (\lambda_j,\mu_t,\eta_t)=(1+\eta_t)^2(1-\mu_t\lambda_j)^2-4\eta_t(1-\mu_t\lambda_j)\), then the solution to (\ref{eq:quadratic-eqnarray}) is:
\begin{eqnarray}\label{eq:solution-eigen}
r^{\pm}(\lambda_j,\mu_t,\eta_t)=\frac{(1+\eta_t)(1-\mu_t\lambda_j)\pm\sqrt{\Delta (\lambda_j,\mu_t,\eta_t)}}{2},
\end{eqnarray}
where the superscript $(\cdot)^{\pm}$ means addition or subtraction in numerator.
For given \(\boldsymbol T\) with fixed \((\mu_t,\eta_t)\), \(\rho(\boldsymbol T)=\max_{\lambda_j} |r^{\pm}(\lambda_j,\mu_t,\eta_t)|\) is continuous and quasi-convex w.r.t. the eigenvalue \(\lambda_j\)~\cite{lessard2016analysis,kim2018adaptive,wang2021asymptotic}.
Thus, the extremal value is attained on the boundary, i.e.
\begin{eqnarray}\label{eq:rhoT}
\rho(\boldsymbol T)=\max (|r^{\pm}(\lambda_{\max},\mu_t,\eta_t)|,|r^{\pm}(\lambda_{\min},\mu_t,\eta_t)|).
\end{eqnarray}
As a whole, \(\rho(\boldsymbol T)\) is determined by the maximum modulus of the roots of (\ref{eq:rhoT}).
We denote that surfaces \(\Pi_1\) and \(\Pi_2\) correspond to \(\lambda_{\min}\) and \(\lambda_{\max}\), respectively.

Below we show how to determine the minimum spectral radius and corresponding parameters.
Back to (\ref{eq:solution-eigen}), \(|r^{\pm}(\lambda_j,\mu_t,\eta_t)|\geq |(1+\eta_t)(1-\mu_t\lambda_j)|/2\) takes the equal if and only if \(\Delta (\lambda_j,\mu_t,\eta_t)=0\).
In this case, we can get a relationship of the parameter \((\mu_t,\eta_t)\)
\begin{eqnarray}\label{eq:opt-eta}
\eta_t^-=\frac{1-\sqrt{\mu_t\lambda_j}}{1+\sqrt{\mu_t\lambda_j}},\eta_t^+=\frac{1+\sqrt{\mu_t\lambda_j}}{1-\sqrt{\mu_t\lambda_j}}.
\end{eqnarray}
Obviously, \(0<\eta_t^-<1<\eta_t^+\).
Given $\mu_t$, there are three cases for $\eta_t$.
\begin{itemize}
\item
(\ref{eq:solution-eigen}) with \(\eta_t\in(0,\eta_{t}^-)\cup(\eta_{t}^+,\infty)\) has two different solutions.
\item
(\ref{eq:solution-eigen}) with \(\eta_t=\eta_{t}^\pm\) has a single solution.
\item
(\ref{eq:solution-eigen}) with \(\eta_t\in(\eta_{t}^-,\eta_{t}^+)\) has conjugate complex solutions.
\end{itemize}

If \(\eta_t\in[\eta_{t}^+,\infty)\), \(r^{\pm}(\lambda_j,\mu_t,\eta_t^+)\geq|(1+\eta_t^+)(1-\mu_t\lambda_j)|/2=1+\sqrt{\mu_t\lambda_j}>1\), and \(\rho(\boldsymbol T)>1\) is obtained form (\ref{eq:rhoT}).
Conversely, when \(\eta_t=\eta_{t}^-\), \(r^{\pm}(\lambda_j,\mu_t,\eta_t^-)=|(1+\eta_t^-)(1-\mu_t\lambda_j)|/2=1-\sqrt{\mu_t\lambda_j}<1\).
This is also why the parameter is selected as \(0<\eta\leq 1\) in practice.
When \(\eta_t\in(\eta_{t}^-,\eta_{t}^+)\), \(\rho(\boldsymbol T)=\max_{\lambda_j} \sqrt{\eta_t(1-\mu_t\lambda_j)}\) monotonically increases w.r.t. \(\eta_t\) and monotonically decreases w.r.t. \(\mu_t\).
We can draw the geometric properties of \(\rho(\boldsymbol T)\) w.r.t. \((\mu_t,\eta_t)\), and condition \(\Delta (\lambda_j,\mu_t,\eta_t^-)=0\) helps to find the theoretical lower bound of \(\rho(\boldsymbol T)\).
The optimal parameter pair \((\mu_\flat,\eta_\flat)\) is the intersection of $r^{\pm}(\lambda_{\min},\mu_\flat,\eta_\flat)$ in the curve \(\eta_t^-=\frac{1-\sqrt{\mu_t\lambda_j}}{1+\sqrt{\mu_t\lambda_j}}\) and the surface \(\Pi_2\), i.e., $|r^{-}(\lambda_{\max},\mu_t,\eta_t)|$.
So it satisfies the following equation
\begin{eqnarray*}
(1+\eta_\flat)(1-\mu_\flat\lambda_{\min})=-(1+\eta_\flat)(1-\mu_\flat\lambda_{\max}^2)+\sqrt{(1+\eta_\flat)^2(1-\mu_\flat\lambda_{\max}^2)^2-4\eta_\flat (1-\mu_\flat\lambda_{\max}^2)}.
\end{eqnarray*}
Bringing in \(\eta_\flat=\frac{1-\sqrt{\mu_\flat\lambda_{\min}}}{1+\sqrt{\mu_\flat\lambda_{\min}}}\), it is not difficult for us to get optimal convergence result $\mu_{\flat}=\frac{4}{\lambda_{\min}+3\lambda_{\max}}$ and $\rho_{\mathsf{opt}}(\boldsymbol T)=1-\sqrt{\frac{4\lambda_{\min}}{\lambda_{\min}+3\lambda_{\max}}}$ in (\ref{eq:NAG-opt-convergence}).
Also, for \(\eta_t<1\), the intersection of \(\Pi_1\) and \(\Pi_2\) can be calculated according to monotonicity
\begin{eqnarray*}
r^{+}(\lambda_{\min},\mu_t,\eta_t)=-r^{-}(\lambda_{\max},\mu_t,\eta_t).
\end{eqnarray*}
If \(\eta_t=0\), it simplifies to \(\mu_t=2/(\lambda_{\min}+\lambda_{\max})=\mu_\dagger\) in (\ref{eq:IHT-step}).
Due to momentum, the optimal stepsizes satisfy \(\mu_\flat<\mu_\dagger\).
In fact, we bring \(\eta_t=0\) to get \(\boldsymbol e_t=\boldsymbol H\boldsymbol e_{t-1}+\mathcal O(\|\boldsymbol e_{t-1}\|_2^2)\), which is consistent with the non-accelerated iteration.
Conversely, if \(\mu_t\geq \mu_\dagger\), then \(\eta_t=0\) is a good parameter choice, which means NAG degenerates to Grad.
When \(\eta_t\neq0\), we have
\begin{eqnarray*}
\eta_t\mu_t^2(\lambda_{\max}-\lambda_{\min})^2+2(1+\eta_t)^2(1-\mu_t\lambda_{\max})(1-\mu_t\lambda_{\min})(2-\mu_t(\lambda_{\min}+\lambda_{\max}))=0.
\end{eqnarray*}
Despite the complex form, we use symbolic computing tools to solve  when \(\mu_t\in(\mu_\flat,\mu_\dagger)\)
\begin{tiny}
\[\begin{aligned}
\eta_{t\Join}&=[(-4\lambda_{\min}^2\lambda_{\max}\mu_t^3+5\lambda_{\min}^2\mu_t^2-4\lambda_{\min}\lambda_{\max}^2\mu_t^3+14\lambda_{\min}\lambda_{\max}\mu_t^2-12\lambda_{\min}\mu_t+5\lambda_{\max}^2\mu_t^2-12\lambda_{\max}\mu_t+8)\\
&-\sqrt{\mu_t^2(-(\lambda_{\max}-\lambda_{\min})^2)(8\lambda_{\min}^2\lambda_{\max}\mu_t^3-9\lambda_{\min}^2\mu_t^2+8\lambda_{\min}\lambda_{\max}^2\mu_t^3-30\lambda_{\min}\mu_t^2+24\lambda_{\min}\mu_t-9\lambda_{\max}^2\mu_t^2+24\lambda_{\max}\mu_t-16)}]\\
&/(4(\lambda_{\min}\mu_t-1)(\lambda_{\max}\mu_t-1)(\lambda_{\min}\mu_t+\lambda_{\max}\mu_t-2)).
\end{aligned}\]
\end{tiny}
We analyze the spectral radius of \(\boldsymbol T\) in (\ref{eq:rhoT}) w.r.t. pair \((\mu_t,\eta_{t})\) by case.
\end{proof}

\section{Proof in Sect.~\ref{sec:convergence-Riemannian-optimization}}
\subsection{Proof of Lemma~\ref{lem:retraction-Perturbation}}
\label{app:lem:retraction-Perturbation}
\begin{proof}
The first one obviously holds according to Lemma~\ref{lem:TSVD-Perturbation}.
From (\ref{eq:orth-retraction}), we have
\begin{eqnarray*}
\begin{aligned}
\mathcal R_{\boldsymbol X}^{\mathsf{orth}}(\boldsymbol N)
&=(\boldsymbol X+\boldsymbol N)\boldsymbol V_{\boldsymbol X}(\boldsymbol \Sigma_{\boldsymbol X}+\boldsymbol U_{\boldsymbol X}^\top \boldsymbol N \boldsymbol V_{\boldsymbol X})^{-1}\boldsymbol U_{\boldsymbol X}^\top(\boldsymbol X+\boldsymbol N)\\
&\stackrel{(a)}{=}(\boldsymbol X+\boldsymbol N)\boldsymbol V_{\boldsymbol X}(\boldsymbol \Sigma_{\boldsymbol X}^{-1}-\boldsymbol \Sigma_{\boldsymbol X}^{-1}\boldsymbol U_{\boldsymbol X}^\top \boldsymbol N\boldsymbol V_{\boldsymbol X}\boldsymbol \Sigma_{\boldsymbol X}^{-1})\boldsymbol U_{\boldsymbol X}^\top(\boldsymbol X+\boldsymbol N)+\mathcal O(\|\boldsymbol N\|_F^2)\\
&=(\boldsymbol X+\boldsymbol N)(\boldsymbol V_{\boldsymbol X}\boldsymbol \Sigma_{\boldsymbol X}^{-1}\boldsymbol U_{\boldsymbol X}^\top-\boldsymbol V_{\boldsymbol X}\boldsymbol \Sigma_{\boldsymbol X}^{-1}\boldsymbol U_{\boldsymbol X}^\top \boldsymbol N \boldsymbol V_{\boldsymbol X}\boldsymbol \Sigma_{\boldsymbol X}^{-1}\boldsymbol U_{\boldsymbol X}^\top)(\boldsymbol X+\boldsymbol N)+\mathcal O(\|\boldsymbol N\|_F^2)\\
&=(\boldsymbol X+\boldsymbol N)(\boldsymbol X^{-\top}-\boldsymbol X^{-\top}\boldsymbol N \boldsymbol X^{-\top})(\boldsymbol X+\boldsymbol N)+\mathcal O(\|\boldsymbol N\|_F^2)\\
&\stackrel{(b)}{=}\boldsymbol X\boldsymbol X^{-\top}\boldsymbol X+\boldsymbol N\boldsymbol X^{-\top}\boldsymbol X+\boldsymbol X\boldsymbol X^{-\top}\boldsymbol N-\boldsymbol X\boldsymbol X^{-\top}\boldsymbol N \boldsymbol X^{-\top}\boldsymbol X+\mathcal O(\|\boldsymbol N\|_F^2)\\
&\stackrel{(c)}{=}\boldsymbol X+P_{\boldsymbol U_{\boldsymbol X}} \boldsymbol N+\boldsymbol N P_{\boldsymbol V_{\boldsymbol X}}-P_{\boldsymbol U_{\boldsymbol X}} \boldsymbol N P_{\boldsymbol V_{\boldsymbol X}}+\mathcal O(\|\boldsymbol N\|_F^2)\\
&=\boldsymbol X+\boldsymbol N-P_{\boldsymbol U_{\boldsymbol X}}^\perp \boldsymbol N P_{\boldsymbol V_{\boldsymbol X}}^\perp+\mathcal O(\|\boldsymbol N\|_F^2)\\
&=\mathcal P_{\mathbb T_{\boldsymbol X}}(\boldsymbol X+\boldsymbol N)+\mathcal O(\|\boldsymbol N\|_F^2),
\end{aligned}
\end{eqnarray*}
where $(a)$ is the perturbation analysis of matrix inverse.
As long as $\|\boldsymbol A^{-1}\boldsymbol B\|<1$ or $\|\boldsymbol B\boldsymbol A^{-1}\|<1$ holds, the Taylor expansion of the inverse of the matrix sum is as follows
\begin{eqnarray*}
\begin{aligned}
(\boldsymbol A+\boldsymbol B)^{-1}
&=\boldsymbol A^{-1} - \boldsymbol A^{-1}\boldsymbol B\boldsymbol A^{-1} + \boldsymbol A^{-1}(\boldsymbol B\boldsymbol A^{-1})^2 - \boldsymbol A^{-1}(\boldsymbol B\boldsymbol A^{-1})^3 + \cdots\\
&=\boldsymbol A^{-1}-\boldsymbol A^{-1}\boldsymbol B\boldsymbol A^{-1}+\mathcal O(\|\boldsymbol B\|_F^2).
\end{aligned}
\end{eqnarray*}
Using the norm inequality $\|\boldsymbol A\boldsymbol B\|\leq\|\boldsymbol A\| \|\boldsymbol B\|$, combined with the condition $\|\boldsymbol N\|\leq\|\boldsymbol N\|_F< \sigma_{\replaced{r}{\min}}(\boldsymbol X)/2$, it can be judged that the inverse matrix condition holds.
\begin{eqnarray*}
\|\boldsymbol \Sigma_{\boldsymbol X}^{-1}(\boldsymbol U_{\boldsymbol X}^\top \boldsymbol N \boldsymbol V_{\boldsymbol X})\|\leq\frac{\|\boldsymbol U_{\boldsymbol X}^\top \boldsymbol N \boldsymbol V_{\boldsymbol X}\|}{\|\boldsymbol \Sigma_{\boldsymbol X}\|}\leq\frac{\|\boldsymbol N\|}{\sigma_{\replaced{r}{\min}}(\boldsymbol X)}<1.
\end{eqnarray*}
$(b)$ merges the product of multiple $\boldsymbol N$ into higher-order terms.
$(c)$ uses the SVD of $\boldsymbol X$ to get
\begin{eqnarray*}
\begin{aligned}
&\boldsymbol X\boldsymbol X^{-\top}=\boldsymbol U_{\boldsymbol X}\boldsymbol \Sigma_{\boldsymbol X} \boldsymbol V_{\boldsymbol X}^\top \boldsymbol V_{\boldsymbol X}\boldsymbol \Sigma_{\boldsymbol X}^{-1}\boldsymbol U_{\boldsymbol X}^\top =\boldsymbol U_{\boldsymbol X}\boldsymbol U_{\boldsymbol X}^\top=P_{\boldsymbol U_{\boldsymbol X}},\\
&\boldsymbol X^{-\top}\boldsymbol X=\boldsymbol V_{\boldsymbol X}\boldsymbol \Sigma_{\boldsymbol X}^{-1}\boldsymbol U_{\boldsymbol X}^\top \boldsymbol U_{\boldsymbol X}\boldsymbol \Sigma_{\boldsymbol X} \boldsymbol V_{\boldsymbol X}^\top =\boldsymbol V_{\boldsymbol X}\boldsymbol V_{\boldsymbol X}^\top=P_{\boldsymbol V_{\boldsymbol X}},\\
&\boldsymbol X\boldsymbol X^{-\top}\boldsymbol X=P_{\boldsymbol U_{\boldsymbol X}} \boldsymbol U_{\boldsymbol X}\boldsymbol \Sigma_{\boldsymbol X} \boldsymbol V_{\boldsymbol X}^\top=\boldsymbol U_{\boldsymbol X}\boldsymbol \Sigma_{\boldsymbol X} \boldsymbol V_{\boldsymbol X}^\top=\boldsymbol X.
\end{aligned}
\end{eqnarray*}
\end{proof}

\subsection{Convergence for Algorithm~\ref{Alg-RGrad}}
\label{app:col:MatRGrad}

\begin{proof}
According to Algorithm~\ref{Alg-RGrad}, we have
\begin{eqnarray*}
\begin{aligned}
\boldsymbol E_{t+1}&=\mathcal R_{\boldsymbol X_t}(-\mu_t\text{grad}f(\boldsymbol X_t))-\boldsymbol X_\star\\
&\stackrel{(a)}{=}\mathcal P_{\mathbb T_{X_t}\mathbb M_r}(\boldsymbol X_t-\mu_t\nabla f(\boldsymbol X_t))-\boldsymbol X_\star+\mathcal O(\|\boldsymbol E_{t}\|_F^2)\\
&\stackrel{(b)}{=}(\boldsymbol E_{t}-\mu_t\nabla f(\boldsymbol X_t))-P_{\boldsymbol U_t}^\perp(\boldsymbol E_{t}-\mu_t\nabla f(\boldsymbol X_t))P_{\boldsymbol V_t}^\perp+\mathcal O(\|\boldsymbol E_{t}\|_F^2)\\
&\stackrel{(c)}{=}(\boldsymbol E_{t}-\mu_t\nabla f(\boldsymbol X_t))-P_{\boldsymbol U_\star}^\perp(\boldsymbol E_{t}-\mu_t\nabla f(\boldsymbol X_t))P_{\boldsymbol V_\star}^\perp+\mathcal O(\|\boldsymbol E_{t}\|_F^2),\\
\end{aligned}
\end{eqnarray*}
where \((a)\) uses Lemma~\ref{lem:retraction-Perturbation},
\((b)\) is based on the tangent space projection in (\ref{eq:tangent-space-projection}),
and \((c)\) uses the subspace perturbation in Lemma~\ref{lem:Wedin}, and replaces the subspace $\mathcal P_{\mathbb T_{\boldsymbol X_t}\mathbb M_r}$ with $\mathcal P_{\mathbb T_{\boldsymbol X_\star}\mathbb M_r}$.
\begin{eqnarray*}
\begin{aligned}
\|P_{\boldsymbol U_t}^\perp \boldsymbol AP_{\boldsymbol V_t}^\perp-P_{\boldsymbol U_\star}^\perp \boldsymbol AP_{\boldsymbol V_\star}^\perp\|
&=\|P_{\boldsymbol U_t}^\perp \boldsymbol AP_{\boldsymbol V_t}^\perp-P_{\boldsymbol U_t}^\perp \boldsymbol AP_{\boldsymbol V_\star}^\perp+P_{\boldsymbol U_t}^\perp \boldsymbol AP_{\boldsymbol V_\star}^\perp-P_{\boldsymbol U_\star}^\perp \boldsymbol AP_{\boldsymbol V_\star}^\perp\|\\
&\leq \|P_{\boldsymbol U_t}^\perp \boldsymbol AP_{\boldsymbol V_t}^\perp-P_{\boldsymbol U_t}^\perp \boldsymbol AP_{\boldsymbol V_\star}^\perp\|+\|P_{\boldsymbol U_t}^\perp \boldsymbol AP_{\boldsymbol V_\star}^\perp-P_{\boldsymbol U_\star}^\perp \boldsymbol AP_{\boldsymbol V_\star}^\perp\|\\
&\leq \|P_{\boldsymbol U_t}^\perp\| \|\boldsymbol A\| \|P_{\boldsymbol V_t}^\perp-P_{\boldsymbol V_\star}^\perp\|+\|P_{\boldsymbol U_t}^\perp-P_{\boldsymbol U_\star}^\perp\| \|\boldsymbol A\| \|P_{\boldsymbol V_\star}^\perp\|\\
&=\mathcal O(\|\boldsymbol E_t\|_F^2),
\end{aligned}
\end{eqnarray*}
The subsequent proof is consistent with the proof of Theorem~\ref{th:MatIHT} in Appendix~\ref{app:Th:MatIHT}.
\end{proof}

\subsection{Convergence for Algorithm~\ref{Alg-NARG}}
\label{app:col:MatNARG}
\begin{proof}
The proof is divided into three steps to analyse $\boldsymbol X_{t-1}$, $\boldsymbol Y_t$ and $\boldsymbol X_{t+1}$, respectively.

Step 1: Calculate the orthographic retraction of $\boldsymbol X_{t-1}$ and the inverse matrix.
\begin{eqnarray*}
\begin{aligned}
\mathsf{inv} \mathcal R^{\mathsf {orth}}_{\boldsymbol X_t}(\boldsymbol X_{t-1})
&=\mathcal P_{\mathbb T_{\boldsymbol X_t}\mathbb M_r}(\boldsymbol X_{t-1}-\boldsymbol X_t)\\
&=\mathcal P_{\mathbb T_{\boldsymbol X_t}\mathbb M_r}(\boldsymbol X_{t-1})-\boldsymbol X_t\\
&=\mathcal P_{\mathbb T_{\boldsymbol X_\star}\mathbb M_r}(\boldsymbol X_{t-1})-\boldsymbol X_t+\mathcal O(\|\boldsymbol E_{t}\|_F^2)\\
&=\boldsymbol X_{t-1}-\boldsymbol X_t+\mathcal O(\|\boldsymbol E_{t}\|_F^2+\|\boldsymbol E_{t-1}\|_F^2).\\
\end{aligned}
\end{eqnarray*}
It gives an approximation of $\boldsymbol X_{t-1}$ on the tangent space $\mathbb T_{\boldsymbol X_t}\mathbb M_r$.

Step 2: Similar to Appendix~\ref{app:th:MatNAG}, we calculate the residual of $\boldsymbol Y_{t}$
\begin{eqnarray*}
\begin{aligned}
\boldsymbol Y_{t}-\boldsymbol X_\star&=\mathcal R^{\mathsf {orth}}_{\boldsymbol X_t}(-\eta_t \mathsf{inv} \mathcal R^{\mathsf {orth}}_{\boldsymbol X_t}(\boldsymbol X_{t-1}))-X_\star\\
&=\mathcal P_{\mathbb T_{\boldsymbol X_t}\mathbb M_r}(\boldsymbol X_t-\eta_t \mathsf{inv} \mathcal R^{\mathsf {orth}}_{\boldsymbol X_t}(\boldsymbol X_{t-1}))-\boldsymbol X_\star+\mathcal O(\|\boldsymbol E_t\|_F^2)\\
&=\boldsymbol X_t-\eta_t \mathsf{inv} \mathcal R^{\mathsf {orth}}_{\boldsymbol X_t}(\boldsymbol X_{t-1})-\boldsymbol X_\star+\mathcal O(\|\boldsymbol E_t\|_F^2)\\
&=\boldsymbol X_t-\boldsymbol X_\star+\eta_t (\boldsymbol X_t-\boldsymbol X_{t-1})+\mathcal O(\|\boldsymbol E_{t}\|_F^2+\|\boldsymbol E_{t-1}\|_F^2)\\
&=\boldsymbol E_t+\eta_t(\boldsymbol E_t-\boldsymbol E_{t-1})+\mathcal O(\|\boldsymbol E_{t}\|_F^2+\|\boldsymbol E_{t-1}\|_F^2).
\end{aligned}
\end{eqnarray*}
It also satisfies the linear extrapolation in Euclidean space.

Step 3: Compute $\boldsymbol X_{t+1}-\boldsymbol X_\star$ to get the recursive form
\begin{eqnarray*}
\begin{aligned}
\boldsymbol E_{t+1}&=\boldsymbol X_{t+1}-\boldsymbol X_\star\\
&=\mathcal R^{\mathsf {orth}}_{\boldsymbol Y_t}(-\mu_t\text{grad}f(\boldsymbol Y_t))-\boldsymbol X_\star\\
&=\mathcal P_{\mathbb T_{\boldsymbol Y_t}\mathbb M_r}(\boldsymbol Y_t-\mu_t \nabla f(\boldsymbol Y_t))-\boldsymbol X_\star+\mathcal O(\|\boldsymbol Y_t-\boldsymbol X_\star\|_F^2)\\
&=\mathcal P_{\mathbb T_{\boldsymbol X_\star}\mathbb M_r}(\boldsymbol Y_t-\mu_t \nabla f(\boldsymbol Y_t))-\boldsymbol X_\star+\mathcal O(\|\boldsymbol Y_t-\boldsymbol X_\star\|_F^2)\\
&=(\boldsymbol Y_t-\boldsymbol X_\star-\mu_t \nabla f(\boldsymbol Y_t))-P_{\boldsymbol U_\star}^\perp (\boldsymbol Y_t-\boldsymbol X_\star-\mu_t \nabla f(\boldsymbol Y_t))P_{\boldsymbol V_\star}^\perp +\mathcal O(\|\boldsymbol Y_t-\boldsymbol X_\star\|_F^2).\\
\end{aligned}
\end{eqnarray*}
The subsequent proof is consistent with proof of Theorem~\ref{th:MatNAG} in Appendix~\ref{app:th:MatNAG}.
\end{proof}

\subsection{Proof of Restart Condition Equivalence in (\ref{eq:restart-condition})}
\label{app:col:Restart}
\begin{proof}
When condition (\ref{eq:Basin-of-Attraction}) hold,  we have
\begin{eqnarray*}
\begin{aligned}
\begin{aligned}
\langle \nabla f(\boldsymbol Y_{t-1}), \boldsymbol X_t-\boldsymbol X_{t-1}\rangle
&=\langle \text{grad}~f(\boldsymbol Y_{t-1})+\nabla f(\boldsymbol Y_{t-1})-\text{grad}~f(\boldsymbol Y_{t-1}), X_t-\boldsymbol X_{t-1}\rangle\\
&\stackrel{(a)}{=}\langle \text{grad}~f(\boldsymbol Y_{t-1}), \mathsf{inv} \mathcal R^{\mathsf {orth}}_{\boldsymbol Y_{t-1}}(\boldsymbol X_{t})-\mathsf{inv} \mathcal R^{\mathsf {orth}}_{\boldsymbol Y_{t-1}}(\boldsymbol X_{t-1})\rangle\\
&\quad+\langle \nabla f(\boldsymbol Y_{t-1})-\text{grad}~f(\boldsymbol Y_t), \boldsymbol X_t-\mathsf{inv} \mathcal R^{\mathsf {orth}}_{\boldsymbol Y_{t-1}}(\boldsymbol X_{t})\rangle\\
&\quad-\langle \nabla f(\boldsymbol Y_{t-1})-\text{grad}~f(\boldsymbol Y_t), \boldsymbol X_{t-1}-\mathsf{inv} \mathcal R^{\mathsf {orth}}_{\boldsymbol Y_{t-1}}(\boldsymbol X_{t-1})\rangle\\
&\stackrel{(b)}{\approx}\langle \text{grad}~f(\boldsymbol Y_{t-1}), \mathsf{inv} \mathcal R^{\mathsf {orth}}_{\boldsymbol Y_{t-1}}(\boldsymbol X_{t})-\mathsf{inv} \mathcal R^{\mathsf {orth}}_{\boldsymbol Y_{t-1}}(\boldsymbol X_{t-1})\rangle,\\
\end{aligned}
\end{aligned}
\end{eqnarray*}
where $(a)$ uses the orthogonal relationship of the Riemannian gradient and tangent space.
Based on the first-order expansion, we appropriately omit the higher-order terms in $(a)$ to obtain the approximate relationship $(b)$, which will not change the sign before and after the approximation.
As mentioned in step 2 in Appendix~\ref{app:col:MatNARG}, $\nabla f(\boldsymbol Y_{t-1})=\boldsymbol \Theta \text{vec}(\boldsymbol Y_{t-1}-\boldsymbol X_\star)$ and $\text{grad}~f(\boldsymbol Y_{t-1})$ both are first order w.r.t. the residual.
According to Lemma~\ref{lem:retraction-Perturbation}, $\boldsymbol X_t-\mathsf{inv} \mathcal R^{\mathsf {orth}}_{\boldsymbol Y_{t-1}}(\boldsymbol X_{t})$ and $\boldsymbol X_{t-1}-\mathsf{inv} \mathcal R^{\mathsf {orth}}_{\boldsymbol Y_{t-1}}(\boldsymbol X_{t-1})$ are second order.
So the last two terms of $(a)$ are third order, while the remaining inner product is second order.
\end{proof}

\bibliographystyle{spmpsci}      

\end{document}